\newcommand{\R}{\mathrm{R}}
\newcommand{\sF}{\mathcal{F}}
\newcommand{\F}{\mathbb{F}}
\renewcommand{\H}{\mathrm{H}}
\newcommand{\Z}{\mathbb{Z}}
\newcommand{\Q}{\mathbb{Q}}
\newcommand{\X}{\mathcal{X}}
\renewcommand{\P}{\mathbb{P}}
\newcommand{\mmu}{\boldsymbol{\mu}}
\newcommand{\Gm}{\mathbb{G}_\textrm{m}}
\renewcommand{\O}{\mathcal{O}}
\newcommand{\A}{\mathcal{A}}
\newcommand{\nr}{\mathrm{nr}}
\newcommand{\knr}{{k_\nr}}
\newcommand{\Xnr}{{\X_\nr}}
\newcommand{\Onr}{{\O_\nr}}
\newcommand{\Fb}{\bar{\F}}
\newcommand{\Xs}{\X_0}
\newcommand{\Xsns}{\Xs^\circ}
\newcommand{\Xns}{\X^\circ}
\newcommand{\Xsb}{\bar{\X}_0}
\newcommand{\Xsbns}{\Xsb^\circ}
\newcommand{\Zn}{\Z/n}
\newcommand{\isom}{\cong}
\newcommand{\Yt}{\tilde{Y}}
\newcommand{\nonp}{(p')}
\renewcommand{\t}{\bar{\partial}}
\newcommand{\p}{\mathfrak{p}}
\newcommand{\Picsh}{{\mathcal{P}\!\mathit{ic}}}
\newcommand{\ad}{\mathbf{A}}
\DeclareMathOperator{\Br}{Br}
\DeclareMathOperator{\Pic}{Pic}
\DeclareMathOperator{\coker}{coker}
\DeclareMathOperator{\Spec}{Spec}
\DeclareMathOperator{\Gal}{Gal}
\DeclareMathOperator{\cores}{cores}
\DeclareMathOperator{\Hom}{Hom}
\DeclareMathOperator{\NS}{NS}
\DeclareMathOperator{\res}{res}
\DeclareMathOperator{\inv}{inv}
\newtheorem{theorem}{Theorem}[section]
\newtheorem{lemma}[theorem]{Lemma}
\newtheorem{proposition}[theorem]{Proposition}
\newtheorem{corollary}[theorem]{Corollary}
\newtheorem{fact}[theorem]{Fact}
\theoremstyle{remark}
\newtheorem{remark}[theorem]{Remark}
\newtheorem{example}[theorem]{Example}
\theoremstyle{definition}
\newtheorem{definition}[theorem]{Definition}
\title{Bad reduction of the Brauer--Manin obstruction}
\author{Martin Bright}
\email{m.j.bright@math.leidenuniv.nl}
\address{Mathematisch Instituut, Niels Bohrweg 1, 2333 CA Leiden, Netherlands}
\subjclass[2010]{11G25 (primary), 14G20, 11G35, 14F22, 14G05 (secondary)}
\begin{document}

\begin{abstract}
We relate the Brauer group of a smooth variety over a $p$-adic field to the geometry of the special fibre of a regular model, using the purity theorem in \'etale cohomology.  As an illustration, we describe how the Brauer group of a smooth del Pezzo surface is determined by the singularity type of its reduction.  We then relate the evaluation of an element of the Brauer group to the existence of points on certain torsors over the special fibre; we use this to describe situations when the evaluation is constant, and situations when the evaluation is surjective.  In the latter case, we describe how this surjectivity can be used to prove vanishing of the Brauer--Manin obstruction on varieties over number fields.
\end{abstract}

\maketitle

\section{Introduction}

\subsection{Background}

An important tool for studying rational points on surfaces and higher-dimensional varieties is the \emph{Brauer--Manin obstruction}.  This is an obstruction to the existence of rational points, first constructed by Manin~\cite{Manin:GBG}, based on the Brauer group of the variety in question.  For background information on Brauer groups of schemes, see the three articles by Grothendieck~\cite{Grothendieck:GB1,Grothendieck:GB2,Grothendieck:GB3} or Chapter~IV of Milne's book~\cite{Milne:EC}.  Let $X$ be a smooth, geometrically irreducible variety over a number field $K$.  For each place $v$ of $K$, there is a pairing
\begin{equation}\label{eq:locpr}
\Br X \times X(K_v) \to \Br K_v
\end{equation}
given by evaluation: $(\A, P) \mapsto \A(P)$.  Let $\ad_K$ denote the ring of ad\`eles of $K$.  Applying the local invariant map and summing over all places of $K$, we obtain a pairing
\[
\Br X \times X(\ad_K) \to \Q/\Z
\]
defined by $(\A, (P_v)) \mapsto \sum_v \inv_v \A(P_v)$.  Denote by $X(\ad_K)^{\Br}$ the right kernel of this pairing, that is,
\[
X(\ad_K)^{\Br} = \big\{ (P_v) \in X(\ad_K) \quad \big| \quad \sum_v \inv_v \A(P_v) = 0
\text{ for all } \A \in \Br X \big\}.
\]
Manin's observation was that the set of rational points $X(K)$ must be contained, under the diagonal inclusion in $X(\ad_K)$, in $X(\ad_K)^{\Br}$.  In particular, if $X(\ad_K)^{\Br}$ is empty, then $X(K)$ is also empty, and we say that there is a \emph{Brauer--Manin obstruction} to the existence of $K$-rational points on $X$.  In some situations, the Brauer group $\Br X$ and the set $X(\ad_K)^{\Br}$ are effectively computable, making the Brauer--Manin obstruction a valuable tool for the explicit study of rational points.

Central to understanding the Brauer--Manin obstruction is the purely local problem of understanding the pairing~\eqref{eq:locpr}.  One way to view the local structure of a variety at a finite place $v$ is to look at the geometry of a regular model over the ring of integers at $v$.  In this article, we describe how the geometry of a regular model is reflected both in the Brauer group of the variety $X_v = X \times_K K_v$ and in the evaluation pairing.  As an application, we study two cases of bad reduction of del Pezzo surfaces: that where the special fibre is a singular del Pezzo surface, and that where the special fibre is a cone.

There are several results in the literature which we will recover and generalise.  Colliot-Th\'el\`ene and Skorobogatov~\cite{CTS:TAMS-2013} studied the case of good reduction.  In particular, they gave various conditions under which the evaluation map is constant at a place of good reduction.  In a remark, they outlined an extension of some of their results to the case of bad reduction; Section~\ref{sec:prelim} of this article describes very similar results.  On the other hand, several authors have used surjectivity of the evaluation map associated to an algebra to show that it cannot give an obstruction.  For example, Colliot-Th\'el\`ene, Kanevsky and Sansuc~\cite{CTKS} used this approach for diagonal cubic surfaces, and the present author has applied a similar technique to diagonal quartic surfaces~\cite{Bright:vanishing}.  In Sections~\ref{sec:surj} and~\ref{sec:surj2}, we will show how these proofs can be viewed as special cases of a very general technique.

\subsection{Overview}

Our main tool is the purity theorem for the Brauer group, discussed by Grothendieck in Section~6 of~\cite{Grothendieck:GB3}, which describes what happens to the Brauer group of a regular scheme when a regular subscheme is removed.  Section~2 is devoted to recalling the absolute purity theorem in \'etale cohomology and deriving several versions of purity for the Brauer group.  In Section~\ref{sec:arithmetic}, we apply the purity results to relate the Brauer group of a variety to that of a regular model; as an application, in Section~\ref{sec:sdp} we calculate the Brauer group of a del Pezzo surface having the mildest sort of bad reduction, that is, where the reduction is a singular del Pezzo surface.  After an unramified base change, the Brauer group is entirely determined by the singularity type of the bad reduction.  We use this result to exhibit a del Pezzo surface of degree $1$ having an element of order $5$ in its Brauer group.

In Section~\ref{sec:eval}, we turn our attention from the Brauer group itself to the evaluation pairing.  Restricting to a particular element of $\Br X$ gives an evaluation map $X(K_v) \to \Br K_v$.  We begin by giving some criteria for the evaluation map to be constant.  Then, in the opposite direction, we investigate when the image of the evaluation map is as large as possible; using the Weil conjectures, we prove that this happens whenever the residue field is sufficiently large.  In particular, any difference between the evaluation map on points and that on zero-cycles can only happen when the residue field is small.  This allows us to recover a result of Colliot-Th\'el\`ene and Saito characterising those elements of the Brauer group which vanish on all zero-cycles of degree one. In Section~\ref{sec:cone}, we describe in more detail the example of a smooth variety reducing to a cone.  In Section~\ref{sec:global}, we return to global fields and describe how the surjectivity results of Section~\ref{sec:eval} may be used to prove that certain varieties have no Brauer--Manin obstruction to the existence of rational points.  As an application, we give criteria for vanishing of the Brauer--Manin obstruction when a variety reduces to a cone at a bad place.

\subsection{Notation}

If $A$ is an Abelian group, then $A[n]$ and $A/n$ denote the kernel
and cokernel, respectively, of the multiplication-by-$n$ map on $A$; if $\ell$ is prime, then $A(\ell)$ denotes the $\ell$-power torsion subgroup of $A$, and $A(\ell')$ denotes the prime-to-$\ell$ torsion subgroup.

If $X$ is a scheme, then the \emph{Brauer group} of $X$, denoted $\Br
X$, is defined to be the \'etale cohomology group $\H^2(X, \Gm)$.

We will often identify the group $\Zn$ with the $n$-torsion subgroup of $\Q/\Z$, and is doing so identify $\H^1(X,\Zn)$ with the $n$-torsion subgroup of $\H^1(X,\Q/\Z)$.

\section{Variations on the purity theorem}

In~\cite[Section~6]{Grothendieck:GB3}, Grothendieck describes the purity theorem for the Brauer group: that is, the relationship between $\Br X$ and $\Br (X \setminus Z)$ when $Z$ is a regular subscheme of a regular scheme $X$.  In this section, we derive some variants of that result.  We refer to Section~I.16 of Milne's lecture notes~\cite{Milne:LEC} for notation and background.  The absolute cohomological purity theorem, proved by
Gabber~\cite{Fujiwara:PC,Riou:Gysin}, asserts the following:

\begin{theorem}[Absolute cohomological purity]
Let $X$ be a regular scheme, and let $Z$ be a regular closed subscheme
of $X$, everywhere of codimension $c$.  Write $i$ for the inclusion $Z
\to X$.  Let $n$ be coprime to the residue characteristics of $X$; let
$\Lambda$ denote the constant sheaf $\Zn$ on $X$.  Then we have $\R^r
i^! \Lambda=0$ for $r \neq 2c$, and there is a canonical isomorphism
$\R^{2c} i^! \Lambda \isom \Lambda(-c)$ of sheaves on $Z$.
\end{theorem}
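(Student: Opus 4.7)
The plan is to reduce the assertion to a local problem on $Z$ and then proceed by induction on the codimension $c$.

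Since $\R^r i^! \Lambda$ is a sheaf on $Z$, it suffices to compute its stalks; by \'etale localization one may assume that $X = \Spec A$, with $A$ a strictly Henselian regular local ring, and that $Z$ is cut out by a regular sequence $f_1, \ldots, f_c$. In this setting the stalks of $\R^r i^! \Lambda$ are identified with the local \'etale cohomology groups $\H^r_Z(X, \Lambda)$, and the statement becomes a concrete cohomological computation.

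The base case $c = 1$ can be handled directly: when $Z$ is a regular divisor, the localization sequence relating $X$, $Z$ and $X \setminus Z$, together with the Kummer sequence and the fact that $A$ is strictly Henselian, shows that $\R^r i^! \Lambda = 0$ for $r \neq 2$ and that $\R^2 i^! \Lambda \isom \Lambda(-1)$, the isomorphism being induced by the residue map of the discrete valuation ring at the generic point of $Z$. For the inductive step, the natural idea is to factor $i$ through the codimension-one inclusion $V(f_1, \ldots, f_{c-1}) \hookrightarrow X$ and apply the composition spectral sequence for the shriek functors.

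The main obstacle — and the reason the theorem remained open for roughly three decades after Grothendieck formulated it as a conjecture — is that in mixed characteristic the intermediate subscheme $V(f_1, \ldots, f_{c-1})$ need not be regular, so the naive induction on codimension collapses. Gabber's proof circumvents this via a sophisticated geometric argument that ultimately reduces the general statement to a situation amenable to classical techniques over smooth varieties over a field. I would not attempt to reconstruct that argument here and would instead defer to the expositions in~\cite{Fujiwara:PC,Riou:Gysin}.
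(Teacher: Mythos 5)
The paper offers no proof of this statement: it is quoted as Gabber's absolute purity theorem with references to \cite{Fujiwara:PC,Riou:Gysin}, so your decision to defer to those expositions for the substance of the argument is exactly what the paper does, and the reduction to stalks over strictly Henselian local rings is indeed how one sets the problem up.

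However, your claim that the base case $c=1$ ``can be handled directly'' is not correct in the mixed-characteristic setting that is the whole point here. For $X=\Spec A$ strictly Henselian regular local and $Z=V(f)$ a regular divisor, the localisation sequence identifies $\H^r_Z(X,\Lambda)$ with $\H^{r-1}(U,\Lambda)$ for $r\ge 1$, where $U=\Spec A[1/f]$, and the Kummer sequence plus the fact that $A$ is a UFD give $\H^0(U,\Lambda)=\Lambda$ and $\H^1(U,\mmu_n)\cong\Zn$ generated by the class of $f$; this settles $\R^r i^!\Lambda$ for $r\le 1$ and produces the map $\Lambda(-1)\to\R^2 i^!\Lambda$. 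But already the surjectivity statement in degree $2$ requires $\Br(U)[n]=0$, which is Grothendieck's purity question for the Brauer group, and the vanishing of $\H^q(U,\Lambda)$ for $q\ge 2$ --- equivalently of $\R^r i^!\Lambda$ for $r\ge 3$ --- for a regular divisor in mixed characteristic is precisely the hard core of Gabber's theorem, not a consequence of localisation and Kummer theory. The paper's own remark after the theorem makes this point: the computation of $\R^r i^!\mmu_n$ for $r\le 2$ is standard (SGA~4$\tfrac{1}{2}$, Cycle~2.1.4), and it is the vanishing of $\R^3 i^!\mmu_n$ that genuinely needs Gabber. So the obstruction is not only, as you say, that the intermediate subscheme $V(f_1,\dots,f_{c-1})$ may fail to be regular; the codimension-one case itself is already deep. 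With that correction, citing \cite{Fujiwara:PC,Riou:Gysin} for the full statement is the right course.
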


A standard argument then shows that, for any finitely generated
locally free sheaf $\sF$ of $\Lambda$-modules on $X$, we have $\R^r i^!
\sF=0$ for $r \neq 2c$, and an isomorphism $\R^{2c} i^! \sF \isom i^*
\sF (-c)$.
As described by Milne~\cite[p.~112]{Milne:LEC}, the Purity Theorem gives rise to isomorphisms $\H^r_Z(X,\sF) \isom \H^{r-2c}(Z, \sF(-c))$ for all $r \ge 2c$, and $\H^r_Z(X,\sF)=0$ for $r<2c$.  Using these isomorphisms, the long exact sequence for the pair $(X,Z)$ shows that there are isomorphisms $\H^r(X,\sF) \isom \H^r(X \setminus Z, \sF)$ for $r \le 2c-2$, and a long exact sequence of $\Zn$-modules (the \emph{Gysin sequence}):
\begin{multline}\label{eq:les}
0 \to \H^{2c-1}(X, \sF) \to \H^{2c-1}(X \setminus Z, \sF) \to
\H^0(Z, \sF(-c)) \to \\
\to \H^{2c}(X, \sF) \to \H^{2c}(X \setminus Z, \sF) \to \H^1(Z,
\sF(-c)) \to \dotsb.
\end{multline}
The Gysin sequence is functorial in the pair $(X,Z)$.

\begin{remark}
In what follows, we do not need the full force of the purity theorem: we will only make use of $\R^r i^! \mmu_n$ for $r \le 3$.  For $r \le 2$, the calculation of these sheaves is standard (see for example~\cite[Cycle, Proposition~2.1.4]{SGA4.5}); so it is only the vanishing of $\R^3 i^! \mmu_n$ which matters to us.
\end{remark}

It will be helpful to deduce several corollaries where $Z$ is not
necessarily regular.  (Note that Grothendieck's statements of Theorem~6.1, and
therefore of Corollary~6.2, in~\cite{Grothendieck:GB3} are missing some necessary hypotheses, as pointed out in~\cite{DF:JA-1984}.)

\begin{corollary}[Semi-purity in codimension $\ge 2$]\label{cor:codim2}
Suppose that $X$ is a regular excellent scheme, and $Z$ any
closed subscheme of codimension $c \ge 2$.  Then the natural map
$\H^2(X, \sF) \to \H^2(X \setminus Z, \sF)$ is an isomorphism;
and the natural map $\H^3(X, \sF) \to \H^3(X \setminus Z,
\sF)$ is injective, and an isomorphism if $c\ge 3$.
\end{corollary}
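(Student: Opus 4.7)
The plan is to reduce to the regular case treated by the Gysin sequence~\eqref{eq:les} via dévissage along the non-regular locus of $Z$. Replacing $Z$ by $Z_{\mathrm{red}}$ does not change $X \setminus Z$, so I may assume $Z$ is reduced. Because $X$ is excellent, so is $Z$, hence the regular locus of $Z$ is open and dense; let $Z_1 \subset Z$ be its closed complement. Then $\dim Z_1 < \dim Z$, so each component of $Z_1$ has codimension at least $c + 1 \ge 3$ in $X$. Set $U = X \setminus Z_1$ and $V = X \setminus Z \subset U$.

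I would then proceed by Noetherian induction on $Z$. Inside the regular scheme $U$, the closed subscheme $Z \setminus Z_1$ is regular and everywhere of codimension $c$, so the Gysin sequence~\eqref{eq:les} applied to $(U, Z \setminus Z_1)$ supplies isomorphisms $\H^r(U,\sF) \isom \H^r(V,\sF)$ for $r \le 2c - 2$ and an injection for $r = 2c - 1$. By the inductive hypothesis applied to the pair $(X, Z_1)$, whose codimension is at least $3$, the restriction $\H^r(X,\sF) \to \H^r(U,\sF)$ is an isomorphism for $r = 2$ and for $r = 3$. Composing yields the isomorphism on $\H^2$ (since $2c - 2 \ge 2$); for $\H^3$, the case $c = 2$ gives the injection $\H^3(X,\sF) \isom \H^3(U,\sF) \hookrightarrow \H^3(V,\sF)$, while for $c \ge 3$ we have $2c - 2 \ge 4$ and the second map is also an isomorphism.

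The delicate point is that the inductive hypothesis must deliver a genuine isomorphism on $\H^3$ even when one is only trying to prove injectivity for the original $Z$. This is why it is essential that the non-regular locus $Z_1$ has codimension \emph{strictly} larger than $c$, so that the hypothesis of the corollary applies to $Z_1$ in its stronger form (``$c \ge 3$''). The excellence hypothesis enters precisely here: it guarantees that the regular locus of $Z$ is dense, so $Z_1 \subsetneq Z$ and the Noetherian induction terminates.
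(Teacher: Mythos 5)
Your proof is correct and follows essentially the same route as the paper's: d\'evissage along the non-regular locus of $Z$, which has codimension at least $c+1$ in $X$, followed by an application of the Gysin sequence to the regular remainder --- the paper uses descending induction on the codimension rather than Noetherian induction, but that difference is cosmetic. One small point: for $Z \setminus Z_1$ to be \emph{everywhere} of codimension $c$, as the Gysin sequence requires, you should also absorb into $Z_1$ any components of $Z$ of codimension strictly greater than $c$, exactly as the paper does with its set $S$.
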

\begin{proof}
The statement is unaffected by replacing $Z$ with the reduced subscheme $Z_\textrm{red}$, so we may assume that $Z$ is reduced.
Now we use descending induction on the codimension $c$ of $Z$.  If $c = \dim X$, then $\dim Z=0$, so $Z$ is regular
and we can apply the Purity Theorem directly.  Otherwise, let $S$ denote the non-regular locus of $Z$, together with
any components of $Z$ of codimension strictly greater than $c$; since
$Z$ is reduced and excellent, $S$ is a closed
subscheme of codimension at least $1$ in $Z$, so of codimension at least $c+1$ in $X$.  By induction, we have
isomorphisms
\[
\H^2(X, \sF) \cong \H^2(X \setminus S, \sF) \text{ and }
\H^3(X, \sF) \cong \H^3(X \setminus S, \sF).
\]
Now $Z \setminus S$ is a regular closed subscheme of $X \setminus S$,
and so a further application of the Purity Theorem gives the desired
result.
\end{proof}

\begin{corollary}[Semi-purity in codimension $1$]\label{cor:codim1}
Let $X$ be a regular excellent scheme, and $Z$ any reduced, closed
subscheme everywhere of codimension $1$.  Suppose that the non-regular
locus $S$ of $Z$ is of codimension $c$ in $Z$.  Write $X^\circ$ for $X
\setminus S$, and $Z^\circ$ for $Z \setminus S$.  Then there is an
exact sequence
\begin{multline*}
0 \to \H^1(X, \sF) \to \H^1(X \setminus Z, \sF) \to 
\H^0(Z, \sF(-1)) \to \\
\to \H^2(X, \sF) \to \H^2(X \setminus Z, \sF) \to
\H^1(Z^\circ, \sF(-1)) \to \\
\to \H^3(X^\circ, \sF) \to \H^3(X \setminus Z, \sF).
\end{multline*}
If $c \ge 2$, then we may replace $\H^3(X^\circ, \sF)$ with
$\H^3(X, \sF)$.
\end{corollary}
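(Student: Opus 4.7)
The plan is to apply the Gysin sequence~\eqref{eq:les} to the regular codimension-$1$ pair $(X^\circ, Z^\circ)$ and then transfer the ambient cohomology of $X^\circ$ back to $X$ by means of Corollary~\ref{cor:codim2}.

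Since $Z$ is reduced and excellent, its regular locus is open and dense; hence $c \ge 1$, and so $S$ has codimension at least $2$ in $X$. By construction $X^\circ$ is regular, $Z^\circ$ is a regular closed subscheme of $X^\circ$ of codimension $1$, and $X^\circ \setminus Z^\circ = X \setminus Z$. Instantiating~\eqref{eq:les} with $c=1$ for this pair yields the exact sequence
\begin{multline*}
0 \to \H^1(X^\circ, \sF) \to \H^1(X \setminus Z, \sF) \to \H^0(Z^\circ, \sF(-1)) \to \\
\H^2(X^\circ, \sF) \to \H^2(X \setminus Z, \sF) \to \H^1(Z^\circ, \sF(-1)) \to \\
\H^3(X^\circ, \sF) \to \H^3(X \setminus Z, \sF).
\end{multline*}

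I then apply Corollary~\ref{cor:codim2} to the closed subscheme $S \subset X$: since $S$ has codimension $\ge 2$, restriction gives an isomorphism $\H^r(X, \sF) \isom \H^r(X^\circ, \sF)$ for $r = 2$, and an injection for $r = 3$ which is an isomorphism when $c \ge 2$ (equivalently, when $S$ has codimension $\ge 3$ in $X$). The analogous statement for $r \le 1$ is not recorded in Corollary~\ref{cor:codim2} but follows from exactly the same descending induction: the full Gysin sequence~\eqref{eq:les} produces isomorphisms throughout the range $r \le 2c' - 2$ for a regular subscheme of codimension $c' \ge 2$, and this range is preserved under the inductive step of removing the non-regular locus of the subscheme. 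Substituting these identifications (and identifying $\H^0(Z^\circ, \sF(-1))$ with $\H^0(Z, \sF(-1))$, as is tacit in the statement of the corollary) into the Gysin sequence above produces the exact sequence claimed.

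The only genuinely new point beyond Gysin and Corollary~\ref{cor:codim2} is the observation that the induction in the latter works equally well for $r \le 1$; the remainder is bookkeeping and functoriality of the Gysin construction in the pair $(X,S)$.
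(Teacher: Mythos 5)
Your argument coincides with the paper's: apply the Gysin sequence to the regular codimension-one pair $Z^\circ \subset X^\circ$ and then transport the ambient groups back to $X$ via Corollary~\ref{cor:codim2}. You are right that the degree-$1$ isomorphism $\H^1(X,\sF)\isom\H^1(X^\circ,\sF)$ is not literally recorded in Corollary~\ref{cor:codim2} but follows from the same descending induction (the paper silently invokes it for $p=1,2$), and the identification of $\H^0(Z^\circ,\sF(-1))$ with $\H^0(Z,\sF(-1))$ is likewise left tacit there.
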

\begin{proof}
This comes from applying the Purity Theorem to $Z^\circ \subset
X^\circ$ to give an exact sequence as in~\eqref{eq:les}.  Now $X^\circ
\setminus Z^\circ$ is the same as $X \setminus Z$; and Corollary~\ref{cor:codim2} gives isomorphisms $\H^p(X^\circ, \sF) \cong
\H^p(X, \sF)$ for $p=1,2$, and also for $p=3$ in the case $c \ge 2$.
\end{proof}

\begin{corollary}\label{cor:brpurity}
Under the conditions of Corollary~\ref{cor:codim1}, there is an exact
sequence
\[
0 \to \Br X[n] \to \Br(X \setminus Z)[n] \to \H^1(Z^\circ, \Zn) \to
\H^3(X^\circ, \mmu_n) \to \H^3(X \setminus Z, \mmu_n)
\]
for any $n$ coprime to the residue characteristics of $X$.  If $c \ge 2$, then we may replace $\H^3(X^\circ, \mmu_n)$ with $\H^3(X, \mmu_n)$.
\end{corollary}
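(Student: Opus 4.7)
The plan is to extract the claimed sequence from the long exact sequence of Corollary~\ref{cor:codim1} applied to $\sF = \mmu_n$, by feeding in the Kummer sequence. Using the identification $\mmu_n(-1) \isom \Zn$, the relevant segment of that sequence reads
\[
\H^0(Z,\Zn) \to \H^2(X,\mmu_n) \xrightarrow{\alpha} \H^2(X\setminus Z,\mmu_n) \xrightarrow{\beta} \H^1(Z^\circ,\Zn) \to \H^3(X^\circ,\mmu_n) \to \H^3(X\setminus Z,\mmu_n).
\]
The Kummer sequence then furnishes functorial short exact sequences
\[
0 \to \Pic(Y)/n \to \H^2(Y,\mmu_n) \to \Br(Y)[n] \to 0
\]
for $Y = X$ and for $Y = X \setminus Z$, compatible with restriction.

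Two structural facts make the diagram chase work. First, because $X$ is regular and $Z$ has pure codimension one, every divisor on $X \setminus Z$ extends to one on $X$, so $\Pic(X)/n \to \Pic(X\setminus Z)/n$ is surjective. Second, the Gysin map $\H^0(Z,\Zn) \to \H^2(X,\mmu_n)$ sends a connected component of $Z$ to its cycle class and therefore factors through $\Pic(X)/n$. Together these imply that the Picard contributions cancel as we pass to the $\Br[n]$-quotients. For injectivity of $\Br X[n] \to \Br(X\setminus Z)[n]$: an element of the kernel lifts to $\tilde b \in \H^2(X,\mmu_n)$ with $\alpha(\tilde b) \in \Pic(X\setminus Z)/n$; by the first fact $\alpha(\tilde b) = \alpha(\tilde c)$ for some $\tilde c \in \Pic(X)/n$, so $\tilde b - \tilde c \in \ker \alpha$; by the second fact $\ker \alpha$ lies in $\Pic(X)/n$, whence $\tilde b$ itself does, and its class in $\Br X[n]$ vanishes. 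For the connecting map $\Br(X\setminus Z)[n] \to \H^1(Z^\circ,\Zn)$: every class in $\Pic(X\setminus Z)/n$ is the $\alpha$-image of something in $\Pic(X)/n$, on which $\beta \circ \alpha = 0$; hence $\beta$ vanishes on $\Pic(X\setminus Z)/n$ and descends to the quotient $\Br(X\setminus Z)[n]$. Exactness at $\Br(X\setminus Z)[n]$ and at $\H^1(Z^\circ,\Zn)$ then follow from exactness of the purity sequence together with surjectivity of $\H^2(\cdot,\mmu_n) \to \Br(\cdot)[n]$.

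The improvement in the final clause, replacing $\H^3(X^\circ,\mmu_n)$ by $\H^3(X,\mmu_n)$ when $c \geq 2$, transfers verbatim from the corresponding clause in Corollary~\ref{cor:codim1}. There is no deep new ingredient to overcome: the substantive geometry sits in the purity theorem already invoked, and the main obstacle is simply the careful bookkeeping needed to show that Picard classes can be stripped off consistently on both sides and that the residual map on $\Br[n]$ is well-defined and exact.
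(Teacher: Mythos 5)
Your proposal is correct and follows the same skeleton as the paper's proof: splice the Kummer short exact sequences for $X$ and $X\setminus Z$ into the Gysin sequence for $\mmu_n$, using surjectivity of $\Pic X \to \Pic(X\setminus Z)$ (valid because $X$ is regular) to cancel the Picard contributions. The one step you handle by a genuinely different route is the injectivity of $\Br X[n]\to\Br(X\setminus Z)[n]$. The paper simply quotes \cite[Corollary~1.8]{Grothendieck:GB2}, i.e.\ the fact that the Brauer group of a regular scheme injects into that of any dense open (ultimately, into the Brauer group of the function field), and then deduces everything else from the isomorphism $\coker\beta\cong\coker\gamma$. You instead derive injectivity internally from the Gysin sequence, via your ``second structural fact'' that the boundary map $\H^0(Z,\Zn)\to\H^2(X,\mmu_n)$ lands in the image of $\Pic(X)/n$. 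That fact is true but is not formal: it is the compatibility of the Gysin map for a regular divisor with the cycle class map (equivalently, that the Gysin image of the fundamental class of a component is the first Chern class of the associated line bundle), which is the content of \cite[Cycle, Proposition~2.1.4]{SGA4.5} — the same reference the paper invokes in its preceding remark. You should also note that in Corollary~\ref{cor:codim1} the Gysin sequence is really constructed for the pair $Z^\circ\subset X^\circ$ and transported back via the isomorphisms of Corollary~\ref{cor:codim2}, so the cycle-class identification has to be checked on $X^\circ$ and then transferred; since $\Pic(X)/n\to\Pic(X^\circ)/n$ is compatible with these isomorphisms, this causes no difficulty. With that citation supplied, your argument is complete and self-contained; its advantage is that it avoids appealing to the separate injectivity theorem for Brauer groups of regular schemes, at the cost of needing the explicit description of the purity boundary map.
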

\begin{proof}
Applying Corollary~\ref{cor:codim1} with $\sF = \mmu_n$ produces
the exact sequence
\[
\H^2(X, \mmu_n) \to \H^2(X \setminus Z, \mmu_n) \to \H^1(Z^\circ,
\Zn) \to \H^3(X^\circ, \mmu_n) \to \H^3(X \setminus Z, \mmu_n).
\]
Now the Kummer sequence gives a commutative diagram with exact rows
\[
\begin{CD}
0 @>>> \Pic X / n @>>> \H^2(X, \mmu_n) @>>> \Br X[n] @>>> 0 \\
@.     @V{\alpha}VV    @V{\beta}VV          @V{\gamma}VV \\
0 @>>> \Pic (X \setminus Z) / n @>>> \H^2(X \setminus Z, \mmu_n) @>>>
\Br (X \setminus Z)[n]
@>>> 0.
\end{CD}
\]
Since $X$ is regular, the restriction map $\Pic X \to \Pic (X
\setminus Z)$ is surjective, and so $\alpha$ is surjective; therefore
$\coker \beta \cong \coker \gamma$.  Moreover, it follows
from~\cite[Corollary~1.8]{Grothendieck:GB2} that $\Br X \to \Br (X
\setminus Z)$ is injective, and therefore $\gamma$ is injective.
Putting these facts together gives the claimed result.
\end{proof}

\begin{corollary}[Semi-purity for the Brauer group]
\label{cor:brpurity2}
Under the conditions of Corollary~\ref{cor:codim1}, there is an exact
sequence
\begin{multline*}
0 \to \Br X(\ell) \to \Br(X/Z)(\ell) \to \H^1(Z^\circ, \Q_\ell/\Z_\ell) \to \\
\to \H^3(X^\circ, \Gm)(\ell) \to \H^3(X \setminus Z, \Gm)(\ell)
\end{multline*}
%\[
%0 \to \Br X(\ell) \to \Br(X/Z)(\ell) \to \H^1(Z^\circ, \Q_\ell/\Z_\ell) 
%\to \H^3(X^\circ, \Gm)(\ell) \to \H^3(X \setminus Z, \Gm)(\ell)
%\]
for any prime $\ell$ not dividing the residue characteristics of $X$.  If $c \ge 2$, then we may replace $\H^3(X^\circ, \Gm)$ with $\H^3(X, \Gm)$.
\end{corollary}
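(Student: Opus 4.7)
The plan is to derive the statement from Corollary~\ref{cor:brpurity} by passing to the direct limit over $n = \ell^k$ and using the Kummer sequence to re-express the $\mmu_{\ell^k}$-cohomology in degree three in terms of $\H^3(\cdot,\Gm)(\ell)$.

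First I would assemble the exact sequences of Corollary~\ref{cor:brpurity} with $n = \ell^k$ into a directed system, the transition maps being induced by the inclusions $\mmu_{\ell^k} \hookrightarrow \mmu_{\ell^{k+1}}$ and $\Z/\ell^k \hookrightarrow \Z/\ell^{k+1}$. Since filtered colimits of abelian groups are exact, the limit sequence is exact and reads
\begin{multline*}
0 \to \Br X(\ell) \to \Br(X \setminus Z)(\ell) \to \H^1(Z^\circ, \Q_\ell/\Z_\ell) \to \\
\to \varinjlim_k \H^3(X^\circ, \mmu_{\ell^k}) \to \varinjlim_k \H^3(X \setminus Z, \mmu_{\ell^k}),
\end{multline*}
the first three terms being in the required form.

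To identify the last two terms I would apply the Kummer sequence: for any scheme $Y$, the long exact cohomology sequence associated with $0 \to \mmu_{\ell^k} \to \Gm \xrightarrow{\ell^k} \Gm \to 0$ extracts the short exact sequence
\[
0 \to \Br Y/\ell^k \to \H^3(Y, \mmu_{\ell^k}) \to \H^3(Y, \Gm)[\ell^k] \to 0.
\]
A small diagram chase with the natural map of Kummer sequences for $\ell^k$ and $\ell^{k+1}$ identifies the transition map on the leftmost term with multiplication by $\ell$, and that on the right with the natural inclusion. Taking the colimit gives
\[
0 \to \Br Y \otimes \Q_\ell/\Z_\ell \to \varinjlim_k \H^3(Y, \mmu_{\ell^k}) \to \H^3(Y, \Gm)(\ell) \to 0,
\]
and the leftmost term vanishes because $\Br Y$ is torsion (each connected component of $Y$ being a regular integral Noetherian scheme, whose Brauer group injects into that of its function field) while any torsion abelian group tensored with the divisible torsion group $\Q_\ell/\Z_\ell$ is zero.

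Applying this isomorphism with $Y = X^\circ$ and $Y = X \setminus Z$ yields the desired sequence. The parenthetical improvement under $c \ge 2$ follows from the corresponding clause in Corollary~\ref{cor:brpurity}, which permits $\H^3(X^\circ, \mmu_{\ell^k})$ to be replaced by $\H^3(X, \mmu_{\ell^k})$ throughout the argument. The main obstacle is the bookkeeping of transition maps in the Kummer sequence, which in turn controls the vanishing of $\varinjlim_k \Br Y/\ell^k$; everything else is standard.
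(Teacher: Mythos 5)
Your proposal is correct and follows essentially the same route as the paper: take the colimit of Corollary~\ref{cor:brpurity} over $n=\ell^k$, then use the Kummer sequence to identify $\varinjlim_k \H^3(Y,\mmu_{\ell^k})$ with $\H^3(Y,\Gm)(\ell)$ via the vanishing of $\Br Y \otimes \Q_\ell/\Z_\ell$ for $Y$ regular. The paper simply cites Grothendieck for the torsion-ness of $\Br Y$ and for the colimit Kummer sequence where you work out the transition maps by hand, but the argument is the same.
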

\begin{proof}
We apply Corollary~\ref{cor:brpurity} with $n=\ell^r$ for increasing $r$, and then take the direct limit to obtain an exact sequence
\begin{multline*}
0 \to \Br X(\ell) \to \Br(X/Z)(\ell) \to \H^1(Z^\circ, \Q_\ell/\Z_\ell) \to \\
\to \H^3(X^\circ, \mmu_{\ell^\infty}) \to \H^3(X \setminus Z, \mmu_{\ell^\infty}).
\end{multline*}
%\[
%0 \to \Br X(\ell) \to \Br(X/Z)(\ell) \to \H^1(Z^\circ, \Q_\ell/\Z_\ell) 
%\to \H^3(X^\circ, \mmu_{\ell^\infty}) \to \H^3(X \setminus Z, \mmu_{\ell^\infty}).
%\]
On the other hand, the Kummer sequence gives rise to an exact sequence
\[
0 \to \Br X^\circ \otimes \Q_\ell / \Z_\ell \to
\H^3(X^\circ, \mmu_{\ell^\infty}) \to \H^3(X^\circ, \Gm)(\ell)
\to 0
\]
as in~\cite[Th\'eor\`eme~3.1]{Grothendieck:GB2}, and similarly for $X \setminus Z$.  Since $X^\circ$ is regular, $\Br X^\circ$ is torsion by~\cite[Proposition~1.4]{Grothendieck:GB2} and therefore $\Br X^\circ \otimes \Q_\ell/\Z_\ell$ is trivial, giving an isomorphism $\H^3(X^\circ, \mmu_{\ell^\infty}) \isom \H^3(X^\circ, \Gm)(\ell)$.  Similarly, we obtain a compatible isomorphism $\H^3(X \setminus Z, \mmu_{\ell^\infty}) \isom \H^3(X \setminus Z, \Gm)(\ell)$, and hence the claimed result.
\end{proof}

\section{The arithmetic setting}\label{sec:arithmetic}

Throughout this section, we fix the following notation.  $k$ is a finite extension of $\Q_p$, with ring of integers $\O$ and residue field $\F$.  $\X$ is a regular scheme, faithfully flat, separated and of finite type over $\O$, such that the generic fibre $X = \X \times_\O \Spec k$ is a smooth, geometrically irreducible variety over $k$.  The special fibre is denoted by $\Xs = \X \times_\O \Spec \F$.  The non-singular locus of $\Xs$ is $\Xsns$, and the complement in $\X$ of the singular locus of $\Xs$ is denoted $\Xns$.
Because $\Xsns$ is a non-singular variety over $\F$, its irreducible
components are also its connected components; call them $Z_1, \dotsc, Z_r$.  

We will sometimes want to pass to the maximal unramified extension of $k$.  Let $\knr$ denote the maximal unramified extension of $k$, and $\Onr$
its ring of integers; the residue field is $\Fb$, an algebraic closure of $\F$.  Denote by $\Xnr$
the base change of $\X$ to $\Onr$, and by $\Xsbns$ the base change of $\Xsns$ to $\Fb$.

Under the assumptions above, we may apply Corollary~\ref{cor:brpurity2} to the pair $\Xs \subset \X$ and take the product over all primes $\ell \neq p$ to obtain the following exact sequence:
\begin{multline}\label{eq:es}
0 \to \Br \X \nonp \to \Br X \nonp \xrightarrow{\prod_i \partial_i}
\prod_i \H^1(Z_i, \Q/\Z) \nonp \to \\
\to \H^3(\Xns, \Gm)\nonp \to \H^3(X, \Gm)\nonp.
\end{multline}
%\begin{equation}\label{eq:es}
%0 \to \Br \X \nonp \to \Br X \nonp \xrightarrow{\prod_i \partial_i}
%\prod_i \H^1(Z_i, \Q/\Z) \nonp
%\to \H^3(\Xns, \Gm)\nonp \to \H^3(X, \Gm)\nonp.
%\end{equation}
In the special case $\X=\Spec \O$, we have $\Br\O=0$ and we simply
recover the prime-to-$p$ part of the residue map $\Br k \isom
\H^1(\F,\Q/\Z)$ of class field theory.

\begin{remark}
Colliot-Th\'el\`ene and Skorobogatov~\cite{CTS:TAMS-2013} made use of
the first three terms of~\eqref{eq:es}, but instead derived it from a complex defined by Kato using K-theory.
\end{remark}

\begin{remark}
If $\X$ is smooth and proper over $\O$, with geometrically irreducible
fibres, then the exact sequence~\eqref{eq:es} gives a split
short exact sequence
\[
0 \to \Br \X \nonp \to \Br X \nonp \xrightarrow{\partial} \H^1(\Xs, \Q/\Z) \nonp \to 0.
\]
This may be seen as follows, as pointed out by Olivier Wittenberg.  Let $c$ be an element of $\H^1(\Xs, \Q/\Z)$ of order coprime to $p$, which we think of as an element of $\H^1(\Xs, \Zn)$ for suitable $n$.
Proper base change shows that the natural map $\H^1(\X,\Zn) \to
\H^1(\Xs,\Zn)$ is an isomorphism; composing its inverse with the restriction map $\H^1(\X,\Zn) \to \H^1(X, \Zn)$ we obtain a map $\phi \colon \H^1(\Xs,\Zn) \to \H^1(X, \Zn)$.  Now let $\pi$ be a uniformising element in $k$, and denote also by $\pi$ its class in $\H^1(k, \mmu_n) \isom k^\times/(k^\times)^n$.  Then the cup product $\phi(c) \cup \pi$ lies in $\H^2(X, \mmu_n)$, and can be pushed forward to $\Br X$.  We claim that this construction gives a section of $\partial$.  To see this, let $K$ denote the function field of $X$ and $\hat{K}$ its completion under the valuation corresponding to $\Xs$, and let $K_0$ be the function field of $\Xs$.  Then there is a natural inclusion $\H^1(\Xs,\Q/\Z) \subset \H^1(K_0, \Q/\Z)$, and the result follows from the corresponding fact for $\Br \hat{K} \to \H^1(K_0, \Q/\Z)$, which goes back to Witt~\cite{Witt:JRAM-1937} (see also \cite[Exercise~XII.3.3]{Serre:LF} for the statement without the assumption that the residue field be perfect).
\end{remark}

The sequence~\eqref{eq:es} involves the cohomology of $\Xns$,
which in general may not be easy to understand.  However, when $\X$ is
proper over $\O$ and not too singular, we can use the proper base change theorem to relate
the cohomology groups of $\Xns$ to those of $\Xs$.  In doing so, we see
a strong relationship between the cohomology of $\Xs$ and the Brauer
group of $X$.

\begin{remark}
For the following proposition, we assume that the special fibre $\Xs$
is non-singular in codimension $1$.  Since $\X$ is regular and $\Xs$
is an effective Cartier divisor in $\X$, so $\Xs$ is Cohen--Macaulay.
By Serre's criterion, we deduce that $\Xs$ is normal and therefore
geometrically integral.  In particular, the special fibre of our model
can have only one component.
\end{remark}

\begin{proposition}\label{prop:proper}
Suppose that $\X$ is proper over $\O$,
that $\Xs$ is non-singular in codimension $1$, and that the natural map $\Pic\X \to \Pic\Xs$ is surjective.  Let $n$ be coprime to $p$.  Then there are
natural isomorphisms $\Br\X[n] \to \Br \Xs[n]$ and $\H^3(\Xns, \Zn) \to
\H^3(\Xs, \Zn)$, and so the sequence of Corollary~\ref{cor:brpurity} becomes
\[
0 \to \Br \Xs[n] \to \Br X[n] \xrightarrow{\partial} \H^1(\Xsns, \Zn) \to \H^3(\Xs, \mmu_n) \to \H^3(X, \mmu_n).
\]
\end{proposition}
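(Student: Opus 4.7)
The plan is to take the five-term exact sequence of Corollary~\ref{cor:brpurity} applied with $Z = \Xs$ (so that $\X \setminus Z = X$ and $Z^\circ = \Xsns$) and upgrade two of its terms using the extra hypotheses. The three ingredients are (i) semi-purity in codimension~$\ge 2$ (Corollary~\ref{cor:codim2}), (ii) the proper base change theorem for $\X \to \Spec\O$, and (iii) a snake-lemma comparison of Kummer sequences on $\X$ and $\Xs$.

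For the $\H^3$ identification, note that since $\Xs$ is non-singular in codimension~$1$ its singular locus $S$ has codimension $\ge 2$ in $\Xs$, hence codimension $\ge 3$ in $\X$; and by definition $\Xns = \X \setminus S$. Corollary~\ref{cor:codim2} with $c \ge 3$ thus gives a restriction isomorphism $\H^3(\X, \mmu_n) \isom \H^3(\Xns, \mmu_n)$. On the other hand, because $\O$ is complete (hence henselian) and $\X \to \Spec\O$ is proper, proper base change identifies $\H^r(\X, \mmu_n)$ with $\H^r(\Xs, \mmu_n)$ for every $r$. Composing these two produces the desired isomorphism $\H^3(\Xns, \mmu_n) \isom \H^3(\Xs, \mmu_n)$; the same argument works for the constant sheaf $\Zn$.

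For the Brauer-group identification, I would compare the Kummer short exact sequences
\[
0 \to \Pic \X / n \to \H^2(\X, \mmu_n) \to \Br\X[n] \to 0
\]
and its analogue on $\Xs$, with vertical maps induced by restriction. Proper base change makes the middle map an isomorphism, and the hypothesis that $\Pic\X \to \Pic\Xs$ is surjective makes the left map surjective (since $\Pic\X/n \to \Pic\Xs/n$ inherits surjectivity). The snake lemma then forces the right-hand map $\Br\X[n] \to \Br\Xs[n]$ to be an isomorphism.

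Finally I would substitute both isomorphisms into the exact sequence of Corollary~\ref{cor:brpurity}. Since $S$ has codimension $c \ge 2$ in $\Xs$, that corollary allows us to replace $\H^3(\Xns, \mmu_n)$ by $\H^3(\X, \mmu_n)$, which by the preceding paragraph coincides with $\H^3(\Xs, \mmu_n)$. The resulting sequence is exactly the one claimed. None of the individual steps is hard; the main point to be careful about is that proper base change applies in the form needed, which is where the completeness of $\O$ enters, and that the snake-lemma verification really does yield an isomorphism (rather than merely, say, surjectivity) on $\Br[n]$.
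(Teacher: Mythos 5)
Your proposal is correct and follows essentially the same route as the paper: proper base change plus the snake lemma applied to the Kummer sequences for the isomorphism $\Br\X[n] \to \Br\Xs[n]$, and Corollary~\ref{cor:codim2} (with the singular locus of codimension $\ge 3$ in $\X$) combined with proper base change for the $\H^3$ identification. The only difference is that you spell out the codimension count and the role of henselianness slightly more explicitly than the paper does.
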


\begin{proof}
The proper base change theorem shows that the natural maps $\H^i(\X,
\mmu_n) \to \H^i(\Xs, \mmu_n)$ are isomorphisms.  Now the Kummer sequence
gives a commutative diagram with exact rows as follows.
\[
\begin{CD}
0 @>>> \Pic \X/n @>>> \H^2(\X, \mmu_n) @>>> \Br\X[n] @>>> 0 \\
@. @VVV @VV{\isom}V @VVV \\
0 @>>> \Pic \Xs/n @>>> \H^2(\Xs, \mmu_n) @>>> \Br\Xs[n] @>>> 0 \\
\end{CD}
\]
Because the natural map $\Pic \X \to \Pic \Xs$ is surjective, the left-hand vertical
map is also surjective.  By the Snake Lemma, the right-hand vertical
map is an isomorphism.

Now, since $\Xs$ is non-singular in codimension $1$,
Corollary~\ref{cor:codim2} shows that the restriction map $\H^3(\X,
\mmu_n) \to \H^3(\Xns,\mmu_n)$ is an isomorphism; combining this with
the isomorphism from proper base change gives the desired result.
\end{proof}

\begin{remark}
The hypothesis that $\Pic\X \to \Pic\Xs$ be surjective is satisfied whenever $\Xs$ satisfies $\H^2(\Xs,\mathcal{O}_{\Xs})=0$: see~\cite[Corollaire~1 to Proposition~3]{Grothendieck:GFGA}.
\end{remark}

\begin{corollary}\label{cor:proper}
Assume further that $\Br \Xs$ is torsion.  Then the following sequence is exact:
\begin{multline*}
0 \to \Br\Xs\nonp \to \Br X\nonp \to \H^1(\Xsns, \Q/\Z)\nonp \to \\
\to \H^3(\Xs, \Gm)\nonp \to \H^3(X, \Gm)\nonp.
\end{multline*}
%\[
%0 \to \Br\Xs\nonp \to \Br X\nonp \to \H^1(\Xsns, \Q/\Z)\nonp
%\to \H^3(\Xs, \Gm)\nonp \to \H^3(X, \Gm)\nonp.
%\]
\end{corollary}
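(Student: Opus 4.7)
The plan is to mirror the passage from Corollary~\ref{cor:brpurity} to Corollary~\ref{cor:brpurity2}, but starting from Proposition~\ref{prop:proper} instead. The additional hypothesis $\Br\Xs$ torsion plays exactly the same role that regularity of $X^\circ$ played in that earlier passage.

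First, fix a prime $\ell \ne p$ and apply Proposition~\ref{prop:proper} with $n=\ell^r$ for each $r \ge 1$. Since the groups involved are all finite (or at least direct limit systems of torsion modules) and the transition maps are compatible, I take the direct limit over $r$ to obtain an exact sequence
\[
0 \to \Br\Xs(\ell) \to \Br X(\ell) \to \H^1(\Xsns, \Q_\ell/\Z_\ell) \to \H^3(\Xs, \mmu_{\ell^\infty}) \to \H^3(X, \mmu_{\ell^\infty}).
\]
Here $\Br\Xs[\ell^r]$ and $\Br X[\ell^r]$ pass to their $\ell$-primary torsion in the limit, which is legitimate since both Brauer groups are torsion ($\Br\Xs$ by hypothesis, and $\Br X$ by~\cite[Proposition~1.4]{Grothendieck:GB2}, as $X$ is regular).

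Next, I would convert the last two terms from $\mmu_{\ell^\infty}$-coefficients to $\Gm$-coefficients exactly as in the proof of Corollary~\ref{cor:brpurity2}. The Kummer sequence produces short exact sequences
\[
0 \to \Br \Xs \otimes \Q_\ell/\Z_\ell \to \H^3(\Xs, \mmu_{\ell^\infty}) \to \H^3(\Xs, \Gm)(\ell) \to 0,
\]
and analogously for $X$. Because $\Br\Xs$ is torsion (by hypothesis) and $\Br X$ is torsion (as above), both left-hand terms vanish and the middle arrows become isomorphisms. By functoriality of the Kummer sequence these isomorphisms are compatible with the map $\H^3(\Xs,\mmu_{\ell^\infty}) \to \H^3(X,\mmu_{\ell^\infty})$, so I may substitute them into the exact sequence above.

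Finally, I take the product over all primes $\ell \neq p$; since every term in sight is torsion, this is exactly the prime-to-$p$ part and produces the claimed five-term sequence. The only genuinely delicate point is the Kummer step, where one has to check that taking $\ell$-primary parts commutes with the direct limit and that the identification $\H^3(-,\mmu_{\ell^\infty}) \isom \H^3(-,\Gm)(\ell)$ is compatible with the map coming from $\Xs \hookrightarrow \X$; both follow from the naturality of the Kummer sequence in the scheme, so no new input is needed.
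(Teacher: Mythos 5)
Your proposal is correct and follows essentially the same route as the paper: pass to the direct limit over $n=\ell^r$ in Proposition~\ref{prop:proper}, use the Kummer sequence together with the torsion hypothesis on $\Br\Xs$ (and regularity of $X$ for $\Br X$) to identify $\H^3(-,\mmu_{\ell^\infty})$ with $\H^3(-,\Gm)(\ell)$, and assemble over all $\ell\neq p$. No gaps.
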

\begin{proof}
As in the proof of Corollary~\ref{cor:brpurity2}, the assumption that $\Br \Xs$ is torsion gives isomorphisms $\H^3(\Xs, \mmu_{\ell^\infty}) \isom \H^3(\Xs, \Gm)(\ell)$ and $\H^3(X, \mmu_{\ell^\infty}) \isom \H^3(X, \Gm)(\ell)$ for each prime $\ell \neq p$.  Putting these together gives the stated sequence.
\end{proof}

\section{Application: del Pezzo surfaces}\label{sec:sdp}

In this section we apply Corollary~\ref{cor:proper} to the situation where $X$ is a smooth del Pezzo surface over $k$, and
the reduction $\Xs$ is a \emph{singular del Pezzo surface} over $\F$,
in the sense of Coray and Tsfasman~\cite{CT:PLMS-1988}.  (For example, a singular del Pezzo surface of degree $3$ is a cubic surface having only rational double points as singularities.)  The purpose of this section is to
demonstrate how the geometry of the reduction of a variety controls
the Brauer group of the variety.  For background information on Brauer groups of smooth del Pezzo surfaces, including a list of all the possible Brauer groups, see~\cite{Corn:PLMS-2007}.  In particular, recall that the Brauer group of any rational variety over an algebraically closed field is trivial.

Our main theorem is the following statement concerning $\Br X_\nr$.

\begin{theorem}\label{thm:sdp}
Let $\X$ be a regular scheme, proper and flat over $\O$, such that the generic fibre $X$ is a smooth del Pezzo surface over $k$ and the special fibre $\Xs$ is a singular del Pezzo surface over $\F$.  Then the sequence
\[
0 \to \Br \Xsb\nonp \to \Br X_\nr\nonp \to H^1(\Xsbns, \Q/\Z)\nonp \to 0
\]
is exact.
\end{theorem}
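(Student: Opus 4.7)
The plan is to apply Proposition~\ref{prop:proper} to the base change $\Xnr = \X \times_\O \Onr$ at each level $n$ coprime to $p$, pass to the direct limit, and show that the fifth term of the resulting sequence vanishes. The unramified base change $\O \to \Onr$ preserves regularity, flatness and properness, so $\Xnr$ is regular, flat and proper over $\Onr$, with generic fibre $X_\nr$ and special fibre $\Xsb$.

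First I verify the hypotheses of Proposition~\ref{prop:proper}. A singular del Pezzo surface has only rational double point singularities, which are isolated, so $\Xsb$ is non-singular in codimension one; hence, by Remark~3.3, $\Xsb$ is geometrically integral and $\Xsbns$ is connected. For surjectivity of $\Pic \Xnr \to \Pic \Xsb$, by the remark after Proposition~\ref{prop:proper} it suffices that $\H^2(\Xsb, \O_{\Xsb}) = 0$; this holds because the minimal desingularization $\pi \colon Y \to \Xsb$ is a smooth projective rational surface (a weak del Pezzo), so $\H^2(Y, \O_Y) = 0$, and the rationality of the singularities ($R^q\pi_*\O_Y = 0$ for $q \ge 1$) transports this vanishing to $\Xsb$ via the Leray spectral sequence. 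Proposition~\ref{prop:proper} then yields, for every $n$ coprime to $p$, an exact sequence
\[
0 \to \Br \Xsb[n] \to \Br X_\nr[n] \to \H^1(\Xsbns, \Zn) \to \H^3(\Xsb, \mmu_n) \to \H^3(X_\nr, \mmu_n),
\]
and taking the direct limit over $n$ coprime to $p$ reduces the theorem to the claim that $\H^3(\Xsb, \mmu_n) = 0$ for all such $n$.

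The main obstacle is this final vanishing, which I would establish using the same desingularization $\pi \colon Y \to \Xsb$. Since $Y$ is a smooth projective rational surface over $\Fb$, Poincar\'e duality together with the torsion-freeness of $\Pic Y$ gives $\H^3(Y, \mmu_n) = 0$. Because the singularities of $\Xsb$ are isolated rational double points, proper base change shows that $R^q\pi_*\mmu_n$ is a skyscraper sheaf for $q \ge 1$, with stalk at each singular point $x$ equal to $\H^q(E_x, \mmu_n)$ on the exceptional tree of $\P^1$'s $E_x$. The configuration $E_x$ is simply connected, so $R^1\pi_*\mmu_n = 0$, and one-dimensional, so $R^q\pi_*\mmu_n = 0$ for $q \ge 3$. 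The Leray spectral sequence (equivalently, the distinguished triangle $\mmu_n \to R\pi_*\mmu_n \to R^2\pi_*\mmu_n[-2]$) then yields an exact sequence
\[
\H^2(Y, \mmu_n) \to \bigoplus_x \H^2(E_x, \mmu_n) \to \H^3(\Xsb, \mmu_n) \to \H^3(Y, \mmu_n) = 0,
\]
and the first map is surjective because the Kummer classes of the exceptional components, which lie in $\Pic(Y)/n \subset \H^2(Y, \mmu_n)$, restrict to a basis of each stalk $\H^2(E_x, \mmu_n) \isom \Zn^{r_x}$. Hence $\H^3(\Xsb, \mmu_n) = 0$ for all $n$ coprime to $p$, completing the argument.
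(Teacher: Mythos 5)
Your overall strategy is the paper's: apply the purity sequence to $\Xnr$ (the paper uses Corollary~\ref{cor:proper} directly) and kill the $\H^3$ term of the special fibre. Your verification of the hypotheses of Proposition~\ref{prop:proper} is correct and is a useful supplement to the paper, which leaves it implicit. But the final vanishing step contains a genuine error: the claim that $\H^3(\Xsb,\mmu_n)=0$ for all $n$ coprime to $p$ is false. In your exact sequence $\H^2(Y,\mmu_n)\to\bigoplus_x\H^2(E_x,\mmu_n)\to\H^3(\Xsb,\mmu_n)\to 0$, the first map is not surjective in general. Under the identifications $\H^2(Y,\mmu_n)\isom\Pic(Y)/n$ and $\H^2(E_x,\mmu_n)\isom\Zn^{r_x}$, the exceptional classes map to the rows of the intersection matrix of the exceptional configuration, i.e.\ the negative of the ADE Cartan matrix, whose determinant is the discriminant of the root lattice and hence need not be invertible mod $n$ (e.g.\ $A_4$ with $5\mid n$, or $3A_2$ with $3\mid n$). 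Even allowing all of $\Pic(Y)/n$ does not help: since $\Pic Y$ is unimodular, the cokernel of $\Pic Y\to\bigoplus_i\Z$, $L\mapsto(L\cdot E_i)$, is the torsion of $\Pic(Y)/\langle E_i\rangle=\Pic(\Xsbns)$, so your sequence actually computes $\H^3(\Xsb,\mmu_n)\isom\Pic(\Xsbns)_{\mathrm{tors}}\otimes\Zn\isom\Br\Xsb/n$, which is nonzero precisely in the interesting cases (it is $\Z/5$ in the paper's degree-one example with two $A_4$ points). This is consistent with the Kummer sequence $0\to\Br\Xsb/n\to\H^3(\Xsb,\mmu_n)\to\H^3(\Xsb,\Gm)[n]\to 0$: what vanishes is $\H^3(\Xsb,\Gm)[n]$, not $\H^3(\Xsb,\mmu_n)$.

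Consequently the theorem cannot be proved at a fixed finite level $n$; exactness on the right genuinely requires passing to the direct limit, where $\varinjlim_r\Br\Xsb/\ell^r=\Br\Xsb\otimes\Q_\ell/\Z_\ell=0$ because $\Br\Xsb$ is torsion (this is exactly why Corollary~\ref{cor:proper} carries the hypothesis that $\Br\Xs$ is torsion), and the surviving obstruction is $\H^3(\Xsb,\Gm)(\ell)$. Proving that this group vanishes is the real content of the paper's Lemma~\ref{lem:H3-sing}, which runs the Leray spectral sequence for the resolution with $\Gm$-coefficients rather than $\mmu_n$-coefficients: one uses $\Br\Yt=0$ and $\H^3(\Yt,\Gm)[n]=0$ to reduce to showing $(\R^2 f_*\Gm)[n]=0$ on stalks, which follows from the surjectivity of $\Pic(\Yt\times_Y\O_{Y,y}^{\mathrm{sh}})\to\Pic\Yt_y$ (Lipman) together with the vanishing of the Brauer group of the exceptional curve $\Yt_y$. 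You need to replace your last step with an argument of this kind.
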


Before proving the theorem, we extract some corollaries showing how the Brauer group of $X_\nr$ is determined by the reduction type.  The Brauer groups of singular del Pezzo surfaces were calculated in~\cite{Bright:brsing}.  It turns out that, if $Y$ is a singular del Pezzo surface over an algebraically closed field, then there is a (non-canonical) isomorphism $\Br Y \isom \Pic(Y^\circ)_\textrm{tors}$.  In particular, in our situation, the two groups $\Br \Xsb$ and $\H^1(\Xsbns, \Q/\Z)$ appearing in Theorem~\ref{thm:sdp} are always isomorphic.

\begin{corollary}\label{cor:dp4}
Suppose that $p>2$.  Let $X$ be a smooth del Pezzo surface of degree $4$ over $k$ admitting a flat, regular, proper model over $\O$, the special fibre of which is a singular del Pezzo surface of degree $4$.  Then $\Br X_\nr$ is isomorphic to $(\Z/2)^2$ if the special fibre has singularity type $2A_1+A_3$ or $4A_1$, and $\Br X_\nr$ is trivial otherwise.
\end{corollary}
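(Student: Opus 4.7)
The strategy is to apply Theorem~\ref{thm:sdp} directly and then analyse the two outer terms of the resulting short exact sequence case by case.

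The sequence from Theorem~\ref{thm:sdp} reads
\[
0 \to \Br \Xsb\nonp \to \Br X_\nr\nonp \to \H^1(\Xsbns, \Q/\Z)\nonp \to 0,
\]
and, by the remark preceding the corollary together with the Brauer-group calculation of~\cite{Bright:brsing}, both outer terms are non-canonically isomorphic to $\Pic(\Xsbns)_{\mathrm{tors}}$.  I would compute the latter by passing to the minimal desingularisation $\widetilde\Xsb\to\Xsb$: the Picard lattice of $\widetilde\Xsb$ is the rank-$6$ unimodular lattice of a weak del Pezzo of degree~$4$, and $\Pic\Xsbns$ is its quotient by the sublattice $R$ spanned by the exceptional $(-2)$-curves, whose ADE type matches the singularity type of $\Xsb$.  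Running through the finite list of singularity types of singular del Pezzo surfaces of degree~$4$ over $\Fb$ (as enumerated in~\cite{CT:PLMS-1988}) and computing the elementary divisors of each inclusion $R\hookrightarrow\Pic\widetilde\Xsb$ shows that $\Pic(\Xsbns)_{\mathrm{tors}}\isom \Z/2$ precisely for the types $4A_1$ and $2A_1+A_3$, and vanishes otherwise.  Substituting back yields $|\Br X_\nr\nonp|=4$ in the two named cases and $\Br X_\nr\nonp=0$ in all others.

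Two further refinements are then needed.  First, to replace $\Br X_\nr\nonp$ by $\Br X_\nr$, I would use that $\Br \knr = 0$ and $X$ is geometrically rational, so that Hochschild--Serre identifies $\Br X_\nr$ with $\H^1(\Gal(\bar k/\knr),\Pic\bar X)$; the Galois action factors through a finite subgroup of the Weyl group of $D_5$, and in the two degenerations of interest this finite quotient turns out to be a $2$-group, so the hypothesis $p>2$ forces $\Br X_\nr$ to be $2$-primary and hence equal to its prime-to-$p$ part.  Second, to rule out a cyclic extension $\Z/4$ in the two non-trivial cases, I would rerun the purity argument using Corollary~\ref{cor:brpurity} together with Proposition~\ref{prop:proper} at $n=2$ coefficients: the same lattice calculation then yields $|\Br X_\nr[2]|=4$, and comparing with the total order forces $\Br X_\nr\isom(\Z/2)^2$.

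The main obstacle is the case-by-case lattice enumeration: while each individual calculation is elementary, one must carefully track the way each root sublattice $R$ actually sits inside the rank-$6$ Picard lattice of $\widetilde\Xsb$, since it is the embedding (and not merely the abstract Dynkin type of $R$) that determines the elementary divisors and hence the torsion of $\Pic\widetilde\Xsb/R$.
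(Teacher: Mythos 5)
Your overall strategy coincides with the paper's: apply Theorem~\ref{thm:sdp}, identify the two outer terms with $\Pic(\Xsbns)_{\mathrm{tors}}$ via~\cite{Bright:brsing} (which the paper simply cites rather than re-deriving from the lattice embedding $R\hookrightarrow\Pic\widetilde{\X}_0$ as you propose to do), and then resolve the resulting extension. The difference, and the gap, lies in how you resolve the extension $0\to\Z/2\to\Br X_\nr\nonp\to\Z/2\to 0$ in the two non-trivial cases.

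Your proposed second refinement does not work as stated. Running Proposition~\ref{prop:proper} at $n=2$ gives
\[
0 \to \Br \Xsb[2] \to \Br X_\nr[2] \to \H^1(\Xsbns, \Z/2) \to \H^3(\Xsb, \mmu_2),
\]
and to conclude $|\Br X_\nr[2]|=4$ you would need the boundary map $\H^1(\Xsbns,\Z/2)\to\H^3(\Xsb,\mmu_2)$ to vanish. But $\H^3(\Xsb,\mmu_2)$ is not zero here: Lemma~\ref{lem:H3-sing} kills $\H^3(\Xsb,\Gm)[2]$, so the Kummer sequence gives $\H^3(\Xsb,\mmu_2)\cong\Br\Xsb/2\cong\Z/2$ in the two cases of interest. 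Whether the boundary map $\Z/2\to\Z/2$ is zero or an isomorphism is exactly equivalent to whether the extension splits, so your argument is circular; no ``lattice calculation'' on the special fibre determines this map. The paper instead imports an external input: Swinnerton-Dyer's classification~\cite{SD:MPCPS-1993} of the possible groups $\Br X/\Br k$ for a degree-$4$ del Pezzo surface, in which $\Z/4$ does not occur, forcing $(\Z/2)^2$. A similar remark applies to your first refinement: the assertion that the Galois image in $W(D_5)$ is a $2$-group in the relevant degenerations is unjustified as written (a priori the primes $3$ and $5$ divide $|W(D_5)|$), whereas Swinnerton-Dyer's list again shows directly that $\Br X_\nr$ is $2$-primary, so that $p>2$ suffices. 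You should either cite that classification or supply a genuinely independent argument (e.g.\ an explicit splitting) for both steps.
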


\begin{proof}
The calculations of~\cite{Bright:brsing} show that $\Br \Xsb$ and $\H^1(\Xsbns, \Q/\Z)$ are both trivial, and so $\Br X_\nr$ is trivial by Theorem~\ref{thm:sdp}, except in the two special cases stated.  In the two special cases, Theorem~\ref{thm:sdp} gives an exact sequence
\[
0 \to \Z/2 \to \Br X_\nr\nonp \to \Z/2 \to 0.
\]
Swinnerton-Dyer~\cite{SD:MPCPS-1993} has listed the possible Brauer groups of del Pezzo surfaces of degree 4, and the only one which fits into this sequence is $(\Z/2)^2$.
\end{proof}

\begin{corollary}\label{cor:dp3}
Suppose that $p>3$.  Let $X$ be a smooth del Pezzo surface of degree $3$ over $k$ admitting a flat, regular, proper model over $\O$, the special fibre of which is a singular del Pezzo surface of degree $3$.  Then:
\begin{itemize}
\item $\Br X_\nr$ is trivial unless the special fibre has singularity type $A_1+A_5$, $2A_1+A_3$, $4A_1$ or $3A_2$;
\item if the special fibre has singularity type $A_1+A_5$, $2A_1+A_3$ or $4A_1$, then $\Br X_\nr$ is isomorphic to $(\Z/2)^2$;
\item if the special fibre has singularity type $3A_2$, then $\Br X_\nr$ is isomorphic to $(\Z/3)^2$.
\end{itemize}
\end{corollary}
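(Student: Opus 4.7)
The strategy is the same as for Corollary~\ref{cor:dp4}, adapted to degree $3$. Since $p>3$, the prime-to-$p$ torsion contains all $2$- and $3$-torsion, and Theorem~\ref{thm:sdp} supplies the short exact sequence
\[
0 \to \Br \Xsb\nonp \to \Br X_\nr\nonp \to \H^1(\Xsbns, \Q/\Z)\nonp \to 0,
\]
reducing the problem to understanding the two flanking geometric terms as the singularity type of $\Xsb$ varies.

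Next I would invoke the classification of Brauer groups and (prime-to-$p$) fundamental groups of singular del Pezzo surfaces of degree~$3$ over an algebraically closed field, carried out in~\cite{Bright:brsing}; these depend only on the Dynkin type of the singular locus, which for degree~$3$ is a root subsystem of $E_6$. Reading off that classification, both $\Br \Xsb$ and $\H^1(\Xsbns, \Q/\Z)$ vanish except for the four singularity types $A_1+A_5$, $2A_1+A_3$, $4A_1$ and $3A_2$; this gives the first bullet immediately. In the first three cases both groups are isomorphic to $\Z/2$, and in the case $3A_2$ both are isomorphic to $\Z/3$, so the sequence degenerates to
\[
0 \to \Z/2 \to \Br X_\nr \to \Z/2 \to 0
\quad\text{or}\quad
0 \to \Z/3 \to \Br X_\nr \to \Z/3 \to 0.
\]

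To finish, one must resolve this extension problem. I would consult the classification of abstract groups that can arise as the Brauer group of a smooth cubic surface (following Manin's analysis of the Galois action on the $27$ lines, as tabulated in~\cite{Corn:PLMS-2007}): neither $\Z/4$ nor $\Z/9$ appears on that list, so the only abstract groups of order~$4$ or~$9$ that fit into the displayed sequences are $(\Z/2)^2$ and $(\Z/3)^2$ respectively. The main obstacle is precisely this last step: the purity machinery of Theorem~\ref{thm:sdp} tells us the order and the two-step filtration of $\Br X_\nr$, but not its isomorphism class, and an external input — here, the list of admissible Brauer groups of smooth cubic surfaces — is needed to rule out cyclic extensions. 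Everything else is bookkeeping against the singularity-type tables of~\cite{Bright:brsing}.
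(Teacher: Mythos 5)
Your proposal is correct and follows essentially the same route as the paper: apply Theorem~\ref{thm:sdp}, read off $\Br\Xsb$ and $\H^1(\Xsbns,\Q/\Z)$ for each singularity type from the tables of~\cite{Bright:brsing}, and resolve the resulting extension problem by appealing to the known list of possible Brauer groups of smooth cubic surfaces (the paper cites Swinnerton-Dyer~\cite{SD:MPCPS-1993} where you cite Corn~\cite{Corn:PLMS-2007}, but this is the same external input). No gaps.
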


\begin{proof}
As for the previous corollary, the isomorphic groups $\Br \Xsb$ and $\H^1(\Xsbns, \Q/\Z)$ are given in~\cite{Bright:brsing}; the list of possibilities for $\Br X_\nr$ given by Swinnerton-Dyer~\cite{SD:MPCPS-1993} then shows that, in each case, there is only one which fits with Theorem~\ref{thm:sdp}.
\end{proof}

%\begin{remark}
%It is possible to use the inflation-restriction sequence in Galois cohomology to deduce results about $\Br X$ itself.  For example, suppose that $X$ is a del Pezzo surface of degree $3$ admitting a regular model, the special fibre of which has singularity type $3A_2$.  Then $\Pic \Xsb \isom \Z$, and therefore $\Pic X_\nr \isom \Z$.  The inflation-restriction sequence then shows that the restriction homomorphism $\H^1(k, \Pic \bar{X}) \to \H^1(\knr, \Pic \bar{X})$ is injective, so that $\Br X/\Br k$ injects into $\Br X_\nr$.  We deduce that $\Br X/\Br k$ is a subgroup of $(\Z/3\Z)^2$.  If we are in a position to know the Galois actions on $\Br \Xsb$ and $\H^1(\Xsbns,\Q/\Z)$, then we may be able to calculate $\Br X$ exactly.
%\end{remark}

One application of Theorem~\ref{thm:sdp} and the above corollaries is that it gives an approach to writing down explicit del Pezzo surfaces with particular Brauer groups, at least over local fields.  In the past, this problem has been approached from the point of view of constructing surfaces with a particular Galois action on the Picard group by various geometric techniques: see, for example, \cite{EJ:CEJM-2010} and~\cite{EJ:IJNT-2011}.  As a more extended application of Theorem~\ref{thm:sdp}, we now exhibit a smooth del Pezzo surface of degree $1$ over $\Q_{11}$ having a non-constant element of order $5$ in its Brauer group.  Although it is well known that such surfaces can exist, the author is not aware of any explicit examples in the literature.  

\begin{example}
Let $\X$ be the subscheme of weighted projective space $\P^3(1,1,2,3)$, with variables $x,y,z,w$, over the ring $\Z_{11}$, defined by the equation
\begin{multline}\label{eq:dp1}
w^2 = z^3 + (2x^4 + 5x^3y - 2x^2y^2 - 5xy^3 + 2y^4)z \\
+ (-3x^6 + 4x^5y - 4x^4y^2 - 4x^2y^4 - 4xy^5 + 8y^6).
\end{multline}
A straightforward calculation verifies that the generic fibre $X$ is smooth over $\Q_{11}$ and that the special fibre $\Xs$ has two singular points at $(x,y,z,w) = (\pm 1, 1, 5, 0)$.  Considering the singular points as points of $\X$, they are both of multiplicity $1$, and so $\X$ is regular.  Explicitly resolving the singularities (which takes two blow-ups) shows that they are both of type $A_4$, with the exceptional curves all individually defined over $\F_{11}$.  The calculation of~\cite{Bright:brsing} then shows that $\Br \Xsb$ and the torsion subgroup of $\Pic\Xsbns$ both have order $5$.  
Because $X \times \bar{\Q}_{11}$ has trivial Brauer group, the Hochschild--Serre spectral sequence shows that $\Br X_\nr$ is isomorphic to the Galois cohomology group $\H^1(\bar{\Q}_{11}/(\Q_{11})_\nr, \Pic(X \times \bar{\Q}_{11}))$.  Since the Galois action factors through a subgroup of the Weyl group of $E_8$, the only primes which might divide the order of $\Br X_\nr$ are $2$, $3$, $5$ and $7$.  So the prime-to-$11$ part of $\Br X_\nr$ is the whole of it.  Theorem~\ref{thm:sdp} gives the exact sequence
\begin{equation}\label{eq:br5}
0 \to \Z/5 \to \Br  X_\nr \to \Z/5 \to 0.
\end{equation}
Now the list of cases given by Corn~\cite[Theorem~4.1]{Corn:PLMS-2007} shows that the only possibility is that the sequence~\eqref{eq:br5} splits (as a sequence of Abelian groups), giving $\Br X_\nr \isom (\Z/5)^2$.

Over $\Q_{11}$, we cannot directly apply the same method, since the equivalent of Lemma~\ref{lem:H3-sing} does not hold.
However, to see that at least one non-trivial element of $\Br X_\nr$ descends to $\Q_{11}$, it is enough to show that $\Br \Xs$ is non-trivial and then apply Corollary~\ref{cor:proper}.  Let $\Gamma$ denote the absolute Galois group of $\F_p$.  The Hochschild--Serre spectral sequence for the Galois covering $\Xsb \to \Xs$ gives an exact sequence
\[
0 \to \Br \Xs \to (\Br \Xsb)^\Gamma \xrightarrow{\beta} \H^2(\F_p, \Pic \Xsb).
\]
Let $n$ be an integer coprime to $11$.  According to~\cite[Proposition~3.3]{CTS:JRAM-2013}, the map $\beta$ restricted to the $n$-torsion in $(\Br \Xsb)^\Gamma$ is the boundary map associated to the 4-term exact sequence
\[
0 \to \Pic \Xsb \xrightarrow{n} \Pic \Xsb \to \H^2(\Xsb,\mmu_n) \to \Br \Xsb [n] \to 0.
\]
As a consequence, any element of $(\Br \Xsb[n])^\Gamma$ that lifts to a $\Gamma$-invariant element of $\H^2(\Xsb, \mmu_n)$ lies in the kernel of $\beta$.

Let $f \colon Y \to \Xsb$ be a minimal resolution.  The Leray spectral sequence gives an exact sequence
\[
\H^0(\Xsb, \R^1 f_* \mmu_n) \to \H^2(\Xsb, \mmu_n) \to \H^2(Y, \mmu_n) \to \H^0(\Xsb, \R^2 f_* \mmu_n).
\]
Since the fibre of $f$ above each singular point of $\Xsb$ is a tree of $\P^1$s, we have $\R^1 f_* \mmu_n=0$ and therefore $\H^2(\Xsb, \mmu_n)$ injects into $\H^2(Y, \mmu_n)$.  As $Y$ is a rational variety, the Brauer group of $Y$ is trivial and so the Kummer sequence gives an isomorphism $(\Pic Y)/n \to \H^2(Y,\mmu_n)$.  But $\Pic Y$ is generated by the canonical class and the classes of the eight exceptional curves, which are all defined over $\F_{11}$; thus the Galois action on $\Pic Y$ is trivial, and we deduce that the Galois action on $\H^2(\Xsb, \mmu_n)$ is also trivial.  The Kummer sequence further shows that the Galois action on $\Br \Xsb[n]$ is trivial.  Applying this with $n=5$ shows that $\beta$ is the zero map, and the natural map $\Br \Xs \to (\Br \Xsb)^\Gamma = \Br \Xsb$ is an isomorphism.  Therefore we have $\Br \Xs \isom \Z/5$ and Corollary~\ref{cor:proper} shows that there is a $5$-torsion element in $\Br X$.

\end{example}

\begin{remark}
I thank Ronald van Luijk for providing me with the equation of an elliptic surface over $\P^1_\Q$ having two singular fibres of type $I_5$, which arose in the proof of~\cite[Proposition~5.3]{SvL:density}.  The equation~\eqref{eq:dp1} is simply the reduction of that equation modulo $11$.  Viewed as the equation of an elliptic surface in Weierstrass form, equation~\eqref{eq:dp1} defines a surface over $\F_{11}$ having two singular fibres of type $I_5$.  Viewing the equation instead as defining a surface in $\P^3(1,1,2,3)$ is equivalent to blowing down the zero section of the elliptic surface and gives a singular del Pezzo surface of degree $1$ with two $A_4$ singularities.
\end{remark}

Let us now prove Theorem~\ref{thm:sdp}.  We will apply Corollary~\ref{cor:proper} to $\Xnr$; the result will be established if we can show that $\H^3(\Xsb, \Gm)(\ell)$ is trivial for all primes $\ell \neq p$.  That calculation is accomplished by the following lemma.

\begin{lemma}\label{lem:H3-sing}
Let $Y$ be a singular del Pezzo surface over an algebraically closed
field.  Then, for any $n$ prime to the characteristic, we have $\H^3(Y, \Gm)[n]=0$.
\end{lemma}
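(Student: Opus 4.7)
My plan is to combine two Leray-type spectral sequence computations for the minimal desingularization $f \colon \tilde Y \to Y$ with the Kummer sequence on $Y$. The fibre $F_y = f^{-1}(y)$ over each singular point $y$ is a tree of smooth rational curves, and $\tilde Y$ is a smooth projective rational surface. Because $\Pic \tilde Y$ is finitely generated and torsion-free I get $\Br \tilde Y = 0$, $\H^1(\tilde Y, \mmu_n) = 0$, and hence $\H^3(\tilde Y, \mmu_n) = 0$ by Poincar\'e duality; similarly $\Pic F_y \isom \Z^{m_y}$ is free abelian and $\Br F_y = 0$ by Tsen.

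First I would apply the Leray spectral sequence of $f$ with coefficients $\mmu_n$: proper base change gives $\R^1 f_* \mmu_n = 0$ and $\R^2 f_* \mmu_n$ a skyscraper on the singular locus with stalks $\H^2(F_y, \mmu_n) \isom \Pic F_y / n$, while higher direct images vanish for dimension reasons. Combined with $\H^3(\tilde Y, \mmu_n) = 0$, the only surviving differential $d_3$ yields an exact sequence
\[
\H^2(\tilde Y, \mmu_n) \to \bigoplus_y \H^2(F_y, \mmu_n) \to \H^3(Y, \mmu_n) \to 0,
\]
realising $\H^3(Y, \mmu_n)$ as the cokernel of the restriction $\Pic \tilde Y / n \to \bigoplus_y \Pic F_y / n$ under the Kummer identifications. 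Next I would run the same spectral sequence with $\Gm$: using $\R^q f_* \Gm = 0$ for $q \ge 2$ together with $\Br \tilde Y = 0$, one obtains the exact sequence $\Pic \tilde Y \to \bigoplus_y \Pic F_y \to \Br Y \to 0$. By functoriality of the Kummer isomorphism, the first map of the $\mmu_n$-sequence is exactly the mod-$n$ reduction of the first map of the $\Gm$-sequence, so the two cokernels agree: $\H^3(Y, \mmu_n) \isom \Br Y / n$, and both groups are finite.

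Finally, the Kummer exact sequence on $Y$ reads
\[
0 \to \Br Y / n \to \H^3(Y, \mmu_n) \to \H^3(Y, \Gm)[n] \to 0,
\]
and an injection between finite abelian groups of equal cardinality is a bijection, giving $\H^3(Y, \Gm)[n] = 0$. The hardest part will be the computation of the higher direct images $\R^q f_* \Gm$ at the singular points---identifying the stalk of $\R^1 f_* \Gm$ with $\Pic F_y$ (rather than some larger Picard group of a formal neighbourhood) and confirming the vanishing of $\R^2 f_* \Gm$---both of which rely crucially on the rationality of the Du Val singularities; the parallel $\mmu_n$ computation is by contrast immediate from proper base change.
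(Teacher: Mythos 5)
Your overall strategy is sound and genuinely different in shape from the paper's, though it draws on the same toolkit (the Leray spectral sequence for $f\colon \Yt \to Y$, proper base change, Kummer, and $\Br\Yt=0$). The paper never computes $\H^3(Y,\mmu_n)$ or $\Br Y$: it takes $n$-torsion of the $\Gm$-Leray sequence, reducing the lemma to $\H^3(\Yt,\Gm)[n]=0$ (which you also prove, the same way) together with $(\R^2 f_*\Gm)[n]=0$, and it proves the latter stalkwise by a snake-lemma comparison of
\[
0 \to (\R^1 f_*\Gm)_y/n \to (\R^2 f_*\mmu_n)_y \to (\R^2 f_*\Gm)_y[n] \to 0
\]
with the Kummer sequence of the fibre $\Yt_y$ (your $F_y$). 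The payoff of that arrangement is that the only input needed about $(\R^1 f_*\Gm)_y = \Pic(\Yt\times_Y \O_{Y,y}^{\mathrm{sh}})$ is the \emph{surjectivity} of its map to $\Pic\Yt_y$ (Lipman, Lemma~14.3), after which $(\R^2 f_*\Gm)_y[n] \cong \Br\Yt_y[n] = 0$.

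Your version, by contrast, needs the full \emph{isomorphism} $(\R^1 f_*\Gm)_y \cong \Pic F_y$: if that map had a kernel, your $\Gm$-sequence would present $\Br Y$ as the cokernel of $\Pic\Yt \to \bigoplus_y (\R^1 f_*\Gm)_y$ rather than of $\Pic\Yt \to \bigoplus_y \Pic F_y$, and the identification $\H^3(Y,\mmu_n)\cong \Br Y/n$ — hence the counting step — would break. You correctly flag this as the hard point; it is true for rational double points (Artin/Lipman: for a rational surface singularity, $\Pic$ of the resolution over the henselian local ring is free on the exceptional components, and $\Pic$ of a tree of $\P^1$s is likewise free on the components), but it is strictly stronger than what the paper uses, and you must either prove it or rearrange the argument as above so that only surjectivity is required. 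Two smaller points: your assertion $\R^2 f_*\Gm = 0$ is neither proved nor needed — exactness of your $\Gm$-sequence ending in $\Br Y \to 0$ already follows from $\Br\Yt=0$, and the paper only ever establishes the vanishing of the $n$-torsion of this sheaf, which is all that matters; and $\Br F_y = 0$ for the \emph{reducible} curve $F_y$ is not literally Tsen's theorem — the relevant statement is Grothendieck's, which does apply to reducible curves. With these repairs the argument closes, since $\H^3(Y,\mmu_n)$ is indeed finite and an injection between finite groups of equal order is a bijection.
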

\begin{proof}
Consider the resolution of singularities $f\colon \Yt \to Y$, where
$\Yt$ is the corresponding generalised del Pezzo surface (see~\cite{CT:PLMS-1988}).  Because the
sheaves $\R^q f_* \Gm$, for $q>0$, are supported at the finitely many
singular points of $Y$, so $\H^p(Y, \R^q f_* \Gm)$ vanishes for
$p,q>0$.  Also, $f$ is proper and birational, giving $f_* \Gm = \Gm$.  Using these facts, we extract from the Leray spectral sequence for $f$ an exact sequence
\[
\Br \Yt \to \H^0(Y, \R^2 f_* \Gm) \to \H^3(Y, \Gm) \to \H^3(\Yt, \Gm).
\]
Because $\Yt$ is a smooth, rational surface, we have $\Br \Yt = 0$.  Taking $n$-torsion in the above sequence, we have
\[
0 \to \H^0(Y, \R^2 f_* \Gm)[n] \to \H^3(Y, \Gm)[n] \to \H^3(\Yt, \Gm)[n].
\]
Now, $\Pic \Yt$ is torsion-free, giving $\H^1(\Yt, \mmu_n)=0$; by Poincar\'e duality, we deduce $\H^3(\Yt, \mmu_n)=0$ as well.  The Kummer sequence then shows $\H^3(\Yt, \Gm)[n]=0$.  To conclude, it will be enough to prove that $\R^2 f_* \Gm[n] = 0$, and we can check this on stalks as follows.

The Kummer sequence on $\Yt$ gives a short exact sequence of sheaves on $Y$:
\[
0 \to (\R^1 f_* \Gm)/n \to \R^2 f_* \mmu_n \to (\R^2 f_* \Gm)[n] \to 0.
\]
Let $y$ be a singular point of $Y$.  Taking stalks at $y$, which is exact and so preserves kernels and
cokernels, gives the top exact row of the following commutative diagram; the bottom row comes from the Kummer sequence on the fibre $\Yt_y$, which is a union of exceptional curves.
\[
\begin{CD}
0 @>>> (\R^1 f_* \Gm)_y/n @>>> (\R^2 f_* \mmu_n)_y @>>> (\R^2 f_*
\Gm)_y[n] @>>> 0 \\
@. @VVV @VVV @VVV \\
0 @>>> \Pic \Yt_y / n @>>> \H^2(\Yt_y, \mmu_n) @>>> \Br \Yt_y[n] @>>> 0
\end{CD}
\]
The middle vertical map is an isomorphism, by the proper base change
theorem.  Now $(\R^1 f_* \Gm)_y$ is the Picard group of $\Yt \times_Y \O_{Y,y}^\textrm{sh}$, the base change of $\Yt$ to the strictly Henselian local ring at $y$; so the left-hand vertical map in the diagram is surjective by~\cite[Lemma~14.3]{Lipman:IHES-1969}.  By the Snake Lemma, the right-hand vertical arrow is an isomorphism.  But, because $\Yt_y$ is a curve, we have $\Br \Yt_y=0$ by~\cite[Corollary~1.2]{Grothendieck:GB3}; note that this corollary does indeed apply to reducible curves.  It follows that $(\R^2 f_* \Gm)[n]$ vanishes, and we deduce $H^3(Y,\Gm)[n]=0$ as claimed.
\end{proof}

\section{Evaluating elements of Brauer groups}\label{sec:eval}

We return to the general situation of Section~\ref{sec:arithmetic}.
Given an element $\A \in \Br X$, we obtain an \emph{evaluation map}
$\A \colon X(k) \to \Br k$ by thinking of a $k$-point $P$ as a
morphism $\Spec k \to X$ and defining $\A(P) = P^* \A$.  In this
section, we show how the boundary maps $\partial$ of the Gysin
sequence allow us to describe the evaluation map in terms of the
special fibre of the given model.

\subsection{Preliminaries}\label{sec:prelim}

The natural map $\X(\O) \to X(k)$ is not necessarily surjective; in
other words, there may be $k$-points of $X$ which do not extend to
$\O$-points of our model.  We will not be able to say anything about
such points.  As $\X$ is regular, no point of $\X(\O)$ reduces to a
singular point of $\Xs$; that is, we have $\X(\O) = \Xns(\O)$.  

Denote by $\X_i$ the open subscheme of $\Xns$ obtained by removing all
components of the special fibre apart from $Z_i$.  Then $\X(\O) =
\bigcup_i \X_i(\O)$.
The results in this section describe in more detail the
relationship between $\partial_i\A$ and the evaluation map $\A \colon
\X_i(\O) \to \Br k$.  Since this may be studied separately on each
component of the special fibre, we may replace $\X$ by some $\X_i$ and
therefore assume from now on that \textbf{the non-singular locus of
  the special fibre $\Xsns$ has only one connected component}.  In this case, the exact sequence~\eqref{eq:es} gives
\begin{equation}\label{eq:es1}
0 \to \Br \X \nonp \to \Br X \nonp \xrightarrow{\partial}
\H^1(\Xsns, \Q/\Z) \nonp.
\end{equation}
We will further assume that \textbf{$\X(\O)$ is non-empty}, since most of
our statements will be vacuous otherwise.  Note that this condition implies that $\Xsns$ is geometrically connected.   We will sometimes identify $\X(\O)$ with its image in $X(k)$.

\begin{proposition}\label{prop:eval}
Let $P$ be a $k$-point of $X$ lying in the image of $\X(\O)$ and reducing to a point $P_0$ of $\Xsns$.  Then the following diagram commutes:
\[
\begin{CD}
\Br X \nonp @>{\partial}>> \H^1(\Xsns, \Q/\Z) \nonp \\
@V{P^*}VV               @V{P_0^*}VV \\
\Br k \nonp @>{\isom}>> \H^1(\F, \Q/\Z) \nonp
\end{CD}
\]
where the bottom map is the residue map of class field theory.
\end{proposition}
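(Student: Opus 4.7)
The plan is to establish commutativity by pulling back the defining Gysin sequence for $(\Xns, \Xsns)$ along the morphism of pairs induced by $P$.

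Since $\X$ is regular, $\X(\O) = \Xns(\O)$, and the hypothesis that $P$ reduces into $\Xsns$ means that the associated morphism $\tilde P\colon \Spec\O \to \X$ factors through $\Xns$. Its generic and special fibres recover $P\colon \Spec k \to X$ and $P_0\colon \Spec\F \to \Xsns$ respectively. Thus $\tilde P$ is a morphism of pairs $(\Spec\O, \Spec\F) \to (\Xns, \Xsns)$. The plan is to apply the functoriality of the Gysin sequence~\eqref{eq:les} to this morphism of pairs, with coefficient sheaf $\mmu_n$ for $n$ coprime to $p$.

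First, I would invoke functoriality to obtain the commutative square
\[
\begin{CD}
\H^2(X, \mmu_n) @>{\partial_{\Xns}}>> \H^1(\Xsns, \Zn) \\
@V{P^*}VV @VV{P_0^*}V \\
\H^2(k, \mmu_n) @>{\partial_{\O}}>> \H^1(\F, \Zn),
\end{CD}
\]
where the horizontal maps are the Gysin boundaries arising from the codimension-$1$ inclusions $\Xsns \subset \Xns$ and $\Spec\F \subset \Spec\O$. Next I would descend the top row to $\Br X[n]$: as in the proof of Corollary~\ref{cor:brpurity}, the Gysin boundary kills the image of $\Pic X/n$, so it factors through $\Br X[n]$; and since $\Pic k = 0$, the Kummer map $\H^2(k,\mmu_n) \to \Br k[n]$ is an isomorphism, so the left vertical map becomes the pullback $P^*$ on Brauer groups. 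This yields a commutative square as in the proposition but with $n$-torsion groups and $\Zn$ coefficients on the right.

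The third step is to identify $\partial_{\O}$ with the residue map of class field theory on $n$-torsion. This is standard: for the DVR $\O$, the Gysin sequence applied to $\Spec\F \subset \Spec\O$ degenerates (using $\Pic\O = 0$ and $\Br\O = 0$, as $\F$ is finite) to an isomorphism $\Br k[n] \isom \H^1(\F, \Zn)$, and the comparison with the classical residue map amounts to the computation of the purity isomorphism $\R^2 i^!\mmu_n \isom \Zn$ for a henselian trait, which is~\cite[Cycle, Proposition~2.1.4]{SGA4.5}.

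Finally, passing to the colimit over $n$ coprime to $p$ (first over $\ell^r$ for each $\ell \neq p$, then taking the product) converts the $n$-torsion statement into the diagram of the proposition on $\Br X\nonp$. The only non-routine ingredient is the identification of $\partial_{\O}$ with the class field theory residue map; once that standard fact is in hand, the rest is formal from functoriality of the purity isomorphism and of the Kummer sequence.
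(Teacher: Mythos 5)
Your proposal is correct and is essentially the paper's own argument, written out in full: the paper's proof is exactly the observation that $P$ extends to $\Spec\O \to \X$ and that the Gysin sequence is functorial for the resulting morphism of pairs, with the identification of the boundary map for $\Spec\F \subset \Spec\O$ with the class field theory residue map already noted in Section~\ref{sec:arithmetic}. The additional bookkeeping you supply (descending from $\H^2(-,\mmu_n)$ to $\Br[n]$ via the Kummer sequence and passing to the colimit over $n$) is accurate and matches the constructions used elsewhere in the paper.
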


\begin{proof}
By assumption, $P$ extends to a morphism $\Spec \O \to \X$.  The diagram now
follows from the functoriality of the Gysin sequence applied to both $\X$ and $\O$.
\end{proof}

The following corollaries are immediate.

\begin{corollary}\label{cor:modp}
Let $\A$ be an element of $\Br X \nonp$.  Then $\A(P)$ depends only on $P_0$; that is, the evaluation map $\A \colon \X(\O) \to \Br k$ factors through  reduction mod $p$. 
%\qed
\end{corollary}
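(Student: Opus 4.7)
The plan is to deduce this statement directly from Proposition~\ref{prop:eval} applied to two points with the same reduction. Given any $P \in \X(\O)$ whose reduction is $P_0 \in \Xsns(\F)$, the commutative square in Proposition~\ref{prop:eval} says that the image of $\A(P) = P^*\A$ under the residue map $\Br k \nonp \to \H^1(\F, \Q/\Z)\nonp$ equals $P_0^*(\partial \A)$. Since $P_0^*(\partial \A)$ manifestly depends only on the reduction $P_0$, and since the residue map is an isomorphism on prime-to-$p$ parts by local class field theory (in particular, injective), we conclude that $\A(P)$ is also determined by $P_0$ alone.

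Concretely, if $P, P' \in \X(\O)$ are two points with the same reduction $P_0$, then applying Proposition~\ref{prop:eval} to each gives $\res \A(P) = P_0^*(\partial \A) = \res \A(P')$, whence $\A(P) = \A(P')$ by injectivity of $\res$. This exhibits the factorisation of $\A \colon \X(\O) \to \Br k$ through the reduction map $\X(\O) \to \Xsns(\F)$. There is no real obstacle here: the content has already been absorbed into Proposition~\ref{prop:eval} and into the class field theory identification $\Br k \nonp \isom \H^1(\F, \Q/\Z)\nonp$, so the corollary is essentially a formal consequence.
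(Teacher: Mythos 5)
Your proof is correct and is exactly the argument the paper intends: the corollary is stated as an immediate consequence of Proposition~\ref{prop:eval}, and the key point you make explicit --- that the residue map $\Br k\nonp \to \H^1(\F,\Q/\Z)\nonp$ is injective (indeed an isomorphism), so $\A(P)$ is pinned down by $P_0^*(\partial\A)$ --- is precisely why it is immediate. Nothing to add.
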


\begin{corollary}\label{cor:residue}
For any $\A \in \Br X \nonp$, the evaluation map $\A \colon \X(\O) \to \Br k$ is determined by the class of $\partial \A$ in $\H^1(\Xsns, \Q/\Z)$. 
%\qed
\end{corollary}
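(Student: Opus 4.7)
The plan is to deduce Corollary~\ref{cor:residue} immediately from Proposition~\ref{prop:eval}. For any $P \in \X(\O)$ with reduction $P_0 \in \Xsns(\F)$, the commutative square in that proposition gives the identity
\[
\A(P) = \rho^{-1}\bigl(P_0^*(\partial\A)\bigr),
\]
where $\rho \colon \Br k \nonp \to \H^1(\F,\Q/\Z) \nonp$ denotes the residue isomorphism of local class field theory. The right-hand side depends on $\A$ only through the cohomology class $\partial\A \in \H^1(\Xsns,\Q/\Z) \nonp$, which is exactly what the corollary claims.

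Concretely, I would first restate the content of the diagram in Proposition~\ref{prop:eval} as the displayed formula above, then point out two immediate consequences: if $\A,\A' \in \Br X \nonp$ satisfy $\partial\A = \partial\A'$, then $\A(P) = \A'(P)$ for every $P \in \X(\O)$; and conversely the same formula provides an explicit recipe for reconstructing the evaluation map from $\partial\A$, namely reduce $P$ modulo the uniformiser, pull $\partial\A$ back along $P_0$, and apply $\rho^{-1}$. Either formulation gives the statement of the corollary.

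I expect no real obstacle here; all of the substance has already been absorbed into Proposition~\ref{prop:eval}, and this corollary is a formal rephrasing of the commutativity of the Gysin-sequence diagram into a statement about dependence of data. The one subtle point worth flagging, if only in passing, is that both Proposition~\ref{prop:eval} and this corollary treat only $k$-points of $X$ lying in the image of $\X(\O)$; $k$-points of $X$ which do not extend to $\O$-points of the chosen model are out of reach of the purity framework, as already noted at the start of Section~\ref{sec:prelim}.
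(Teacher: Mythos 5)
Your proposal is correct and is exactly the argument the paper intends: the corollary is stated as an immediate consequence of Proposition~\ref{prop:eval}, and inverting the residue isomorphism in the commutative square to get $\A(P) = \rho^{-1}\bigl(P_0^*(\partial\A)\bigr)$ is precisely how one reads it off. No gaps.
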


Corollaries~\ref{cor:modp} and~\ref{cor:residue} may be summarised by saying that the following diagram commutes, where the top row is the evaluation pairing for the Brauer group and the bottom row is the pairing defined by taking the isomorphism class of the fibre of a torsor at a point.

\[
\begin{array}{ccccl}
\Br X \nonp & \times & \X(\O) & \longrightarrow & \Br k \nonp \\[1.5ex]
\partial \Big\downarrow \quad & & \Big\downarrow & & \quad \Big\downarrow \\[1.5ex]
\H^1(\Xsns, \Q/\Z)\nonp & \times & \Xsns(\F) & \longrightarrow & \H^1(\F,\Q/\Z) \nonp
\end{array}
\]

\begin{corollary}
If $\X$ is any model of a smooth $k$-variety $X$, $\A$ an element of
$\Br X \nonp$, and $P$ a $k$-point of $X$ reducing to a smooth point $P_0$ of the special fibre, then $\A(P)$ depends only on the reduction $P_0$.
\end{corollary}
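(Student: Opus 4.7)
The plan is to reduce immediately to Corollary~\ref{cor:modp} by restricting to a suitable open subscheme of $\X$. The non-singular locus $\Xsns$ has finitely many connected components $Z_1,\dotsc,Z_r$, and the smooth point $P_0$ lies on exactly one of them, say $Z_i$. Let $\X_i$ denote the open subscheme of $\Xns$ obtained by deleting all components $Z_j$ with $j\neq i$; this is again a regular model of $X$, and the non-singular locus of its special fibre is the connected variety $Z_i$. Hence $\X_i$ satisfies the standing hypothesis imposed at the start of Section~\ref{sec:prelim} that $\Xsns$ be connected.

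Next I would verify that $P$ lies in $\X_i(\O)$. The assumption that $P$ reduces to a smooth point $P_0$ of the special fibre means that $P$ extends to a morphism $\Spec \O \to \X$ whose closed point maps to $P_0$. Since $P_0 \in Z_i \subset \X_i$ and the generic point maps into the common generic fibre $X$, this morphism factors through the open subscheme $\X_i$. In particular, $\X_i(\O)$ is non-empty and contains the extension of~$P$.

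Now Corollary~\ref{cor:modp}, applied to $\X_i$ and to $\A \in \Br X\nonp$ viewed via the identification of the generic fibres of $\X$ and $\X_i$, asserts precisely that the evaluation $\A(P)$ depends only on the reduction of $P$ modulo $\mathfrak{p}$, which is $P_0$. No substantive obstacle arises; the content of the corollary is the observation that the auxiliary single-component hypothesis of Section~\ref{sec:prelim} can always be arranged locally around a given smooth reduction point, so the conclusion of Corollary~\ref{cor:modp} applies without it.
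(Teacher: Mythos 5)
Your reduction to Corollary~\ref{cor:modp} by shrinking $\X$ to an open subscheme around $P_0$ is exactly the paper's argument; the paper phrases it as applying Proposition~\ref{prop:eval} to a regular Zariski neighbourhood of $P_0$ in $\X$, which amounts to the same localisation.

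The one point you gloss over is the force of the word ``any'' in the statement: the corollary does not assume $\X$ regular, so your assertion that $\X_i$ is ``again a regular model'' cannot be inherited from a hypothesis you no longer have. Indeed, the main hypothesis being relaxed here is global regularity of $\X$, not just the single-component convention of Section~\ref{sec:prelim}. The assertion is nevertheless true and needs only a sentence: since $\X$ is flat and of finite type over $\O$ (as any model is) and its special fibre is smooth over the perfect field $\F$ at every point of $\Xsns$, the structure morphism is smooth, hence $\X$ is regular, at every such point; combined with the smoothness of the generic fibre $X$ over $k$, this shows that all of $\Xns$, and in particular your $\X_i$, is regular. With that observation inserted, your proof is complete and coincides with the paper's.
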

\begin{proof}
Apply Proposition~\ref{prop:eval} to a regular Zariski neighbourhood of $P_0$ in $\X$.
\end{proof}

In order to say more about how the evaluation map associated to an algebra $\A$ depends on the class of $\partial \A$, it will be useful to pass to the maximal unramified extension of $k$.  We keep the notation of Section~\ref{sec:arithmetic}.  Because $\Xnr$ is \'etale over $\X$
and therefore regular, we can consider the exact
sequence~\eqref{eq:es1} with $\X$ replaced by $\Xnr$.  Since we are assuming that $\Xsns(\F)$ is non-empty, it follows that $\Xsbns$ is connected.

\begin{lemma}\label{lem:nr}
Let $\Br(\Xnr/\X)$ denote the kernel of the natural map $\Br\X \to
\Br\Xnr$, and similarly let $\Br(X_\nr/X)$ denote the kernel of the
natural map $\Br X \to \Br X_\nr$.  Then
there is a commutative diagram, with exact rows and columns, as follows.
\begin{equation}\label{eq:nrdiag}
\begin{CD}
0 @>>> \Br(\Xnr/\X) \nonp @>>> \Br(X_\nr/X)\nonp @>{\partial}>> \H^1(\F, \Q/\Z)\nonp \\
@. @VVV @VVV @VVV \\ 
0 @>>> \Br \X \nonp @>>> \Br X \nonp @>{\partial}>> \H^1(\Xsns, \Q/\Z)\nonp \\
@. @VVV @VVV @VVV \\
0 @>>> \Br \Xnr \nonp @>>> \Br X_\nr \nonp @>>> \H^1(\Xsbns, \Q/\Z)\nonp
\end{CD}
\end{equation}
The top row is split, giving a canonical isomorphism 
\[
\Br(X_\nr/X)\nonp \cong \Br(\Xnr/\X)\nonp \oplus \Br k \nonp.
\]
\end{lemma}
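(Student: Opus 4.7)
The plan is as follows. The middle and bottom rows of diagram~\eqref{eq:nrdiag} are the sequence~\eqref{eq:es1} applied respectively to $\X$ and $\Xnr$, and the lower two squares commute by functoriality of the Gysin sequence under the flat base change $\Xnr \to \X$. By definition, $\Br(\Xnr/\X)\nonp$ and $\Br(X_\nr/X)\nonp$ are the kernels of the two left vertical arrows, so the top row is forced on us as soon as we identify the rightmost term with the kernel of the right-hand column. To that end, recall that $\Xsns$ is geometrically connected over $\F$ (a standing assumption). The Hochschild--Serre spectral sequence for the Galois cover $\Xsbns \to \Xsns$ with group $\Gamma = \Gal(\Fb/\F) \cong \hat{\Z}$ yields
\[
0 \to \H^1(\F, \Q/\Z) \to \H^1(\Xsns, \Q/\Z) \to \H^1(\Xsbns, \Q/\Z)^\Gamma \to \H^2(\F, \Q/\Z),
\]
and the last term vanishes since $\Gamma$ has cohomological dimension $1$. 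The image of the middle map automatically lies in the $\Gamma$-invariants, so the kernel of the right-hand column of~\eqref{eq:nrdiag} coincides with $\H^1(\F, \Q/\Z)\nonp$ after taking prime-to-$p$ parts.

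Exactness of the top row is then a routine diagram chase. Injectivity on the left is inherited from that of $\Br\X\nonp \to \Br X\nonp$. For exactness in the middle, suppose $\alpha \in \Br(X_\nr/X)\nonp$ satisfies $\partial\alpha = 0$ in $\H^1(\F, \Q/\Z)\nonp \subset \H^1(\Xsns, \Q/\Z)\nonp$; the middle row produces a lift $\beta \in \Br\X\nonp$ mapping to $\alpha$, and because the bottom row is exact (in particular $\Br\Xnr\nonp \to \Br X_\nr\nonp$ is injective) and $\alpha$ dies in $\Br X_\nr\nonp$, the element $\beta$ must lie in $\Br(\Xnr/\X)\nonp$.

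To split the top row, I would pull back along the structure morphism $\pi \colon X \to \Spec k$. The key input is the vanishing $\Br k_\nr = 0$: indeed $\Onr$ is a strictly Henselian discrete valuation ring with separably closed residue field $\Fb$, so $\Br \Onr = 0$, and the residue sequence for $\Onr$ then gives $\Br k_\nr \cong \H^1(\Fb, \Q/\Z) = 0$. Consequently $\pi^*$ sends $\Br k\nonp$ into $\Br(X_\nr/X)\nonp$. Applying functoriality of the Gysin sequence to the morphism $\X \to \Spec \O$ produces a commutative square whose upper row is the classical local residue sequence $0 \to \Br\O = 0 \to \Br k \xrightarrow{\sim} \H^1(\F, \Q/\Z) \to 0$ of class field theory, and whose right vertical map is the pullback along $\Xsns \to \Spec \F$. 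It follows that $\partial \circ \pi^*$ is the residue isomorphism, so $\pi^*$ splits $\partial$ on the top row and yields the claimed direct sum decomposition. There is no substantive obstacle here: the only inputs beyond diagram chasing are the Hochschild--Serre identification and the vanishing $\Br k_\nr = 0$, both standard.
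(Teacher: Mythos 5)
Your proposal is correct and follows essentially the same route as the paper: the lower rows come from functoriality of the sequence~\eqref{eq:es1}, the Hochschild--Serre sequence for $\Xsbns \to \Xsns$ identifies $\H^1(\F,\Q/\Z)$ as the kernel of the right-hand column, and the splitting comes from the fact that $\Br k\nonp$ lies in $\Br(X_\nr/X)\nonp$ and maps isomorphically onto $\H^1(\F,\Q/\Z)\nonp$ under the residue map. Your explicit justifications (the middle-row diagram chase and the vanishing $\Br k_\nr = 0$) merely spell out steps the paper leaves implicit.
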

\begin{proof}
The second and third rows simply come from the functoriality of our
exact sequence~\eqref{eq:es1}.  Now the Hochschild--Serre spectral
sequence applied to $\Xsbns \to \Xsns$ gives an exact sequence
\[
0 \to \H^1(\F, \Q/\Z) \to \H^1(\Xsns, \Q/\Z) \to \H^1(\Xsbns, \Q/\Z),
\]
identifying $\H^1(\F,\Q/\Z)$ with the subgroup of $\H^1(\Xsns, \Q/\Z)$
consisting of the isomorphism classes of constant torsors.  Therefore
the top row of our diagram consists of the kernels of the three
vertical base change maps, and so the whole diagram commutes.

Now $\Br k$ is certainly contained in $\Br(X_\nr/X)$, and class field
theory shows that the restricted map $\partial \colon \Br k \nonp \to
\H^1(\F,\Q/\Z) \nonp$ is an isomorphism; its inverse gives a canonical section
$\H^1(\F,\Q/\Z) \nonp \to \Br(X_\nr/X) \nonp$ whose image is $\Br k \nonp$.  This
shows that the top row is a split exact sequence, and identifies
$\Br(X_\nr/X) \nonp$ as a direct sum as claimed.
\end{proof}

Denote by $\t \colon \Br X \nonp \to \H^1(\Xsbns, \Q/\Z) \nonp$ the composite homomorphism from the diagram~\eqref{eq:nrdiag}.

\begin{proposition}\label{prop:const}
Let $\A$ be an element of $\Br X \nonp$, and suppose that $\t(\A)$ is zero.  Then the evaluation map $\A \colon \X(\O) \to \Br k$ is constant.
\end{proposition}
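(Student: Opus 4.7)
The plan is to read off the meaning of $\t(\A) = 0$ from the diagram~\eqref{eq:nrdiag} and then combine it with the compatibility of boundary maps established in Proposition~\ref{prop:eval}.

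First I would unpack $\t$. By construction, $\t$ is the composite
\[
\Br X \nonp \xrightarrow{\partial} \H^1(\Xsns, \Q/\Z)\nonp \to \H^1(\Xsbns, \Q/\Z)\nonp,
\]
and the right-hand arrow is the base-change map appearing as the third column of~\eqref{eq:nrdiag}. From the Hochschild--Serre sequence
\[
0 \to \H^1(\F, \Q/\Z) \to \H^1(\Xsns, \Q/\Z) \to \H^1(\Xsbns, \Q/\Z)
\]
recalled in the proof of Lemma~\ref{lem:nr}, the hypothesis $\t(\A)=0$ therefore means that $\partial \A$ lies in the subgroup $\H^1(\F,\Q/\Z)\nonp$ of constant (i.e.\ pulled back from $\Spec \F$) classes in $\H^1(\Xsns, \Q/\Z)\nonp$. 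Write $c \in \H^1(\F,\Q/\Z)\nonp$ for the class with $\partial \A = \pi^* c$, where $\pi \colon \Xsns \to \Spec \F$ is the structure morphism.

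Next I would evaluate. Let $P \in \X(\O)$, and let $P_0 \in \Xsns(\F)$ be its reduction. By Proposition~\ref{prop:eval}, the element $\A(P) \in \Br k \nonp$ corresponds, under the residue isomorphism $\Br k \nonp \isom \H^1(\F,\Q/\Z)\nonp$, to $P_0^*(\partial \A)$. But $P_0$ is a section of $\pi$, so
\[
P_0^*(\partial \A) = P_0^* \pi^* c = (\pi \circ P_0)^* c = c.
\]
Thus $\A(P)$ corresponds to the fixed class $c$ for every $P \in \X(\O)$, and the evaluation map is constant.

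There is no real obstacle here: the statement is essentially a formal consequence of the functoriality of the Gysin sequence together with the splitting provided by class field theory in Lemma~\ref{lem:nr}. The only point that deserves mild care is the identification of the kernel of $\H^1(\Xsns,\Q/\Z) \to \H^1(\Xsbns,\Q/\Z)$ with the constant torsors, which was already handled when setting up diagram~\eqref{eq:nrdiag}.
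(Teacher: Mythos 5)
Your proof is correct and follows essentially the same route as the paper: use the Hochschild--Serre identification from Lemma~\ref{lem:nr} to see that $\t(\A)=0$ forces $\partial\A$ to be a constant class pulled back from $\Spec\F$, then apply Proposition~\ref{prop:eval} to conclude that every evaluation $\A(P)$ corresponds to that same class. Your write-up just makes explicit the computation $P_0^*\pi^*c = c$ that the paper leaves implicit.
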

\begin{proof}
Lemma~\ref{lem:nr} shows that assuming $\t(\A)=0$ implies that $\partial(\A)$ is
the class of a constant torsor.  By Proposition~\ref{prop:eval}, the
evaluation map is constant on $\X(\O)$.
\end{proof}

\begin{corollary}
Let $n$ be coprime to $p$.  If $\H^1(\Xsbns, \Zn)$ is trivial then, for every
$\A \in \Br X[n]$, the evaluation map $\A \colon \X(\O) \to \Br k$ is
constant. 
%\qed
\end{corollary}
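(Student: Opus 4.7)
The plan is to reduce the corollary directly to Proposition~\ref{prop:const}: it suffices to show that the hypothesis $\H^1(\Xsbns, \Zn) = 0$ forces $\t(\A) = 0$ for every $\A \in \Br X[n]$.

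First I would note that since $n$ is coprime to $p$, any $\A \in \Br X[n]$ automatically lies in $\Br X\nonp$, so the map $\t \colon \Br X\nonp \to \H^1(\Xsbns, \Q/\Z)\nonp$ is defined on $\A$. Because $\t$ is a group homomorphism, $\t(\A)$ is killed by $n$, so it lies in the $n$-torsion subgroup of $\H^1(\Xsbns, \Q/\Z)\nonp$. Under the identification from the notation section of $\Zn$ with the $n$-torsion of $\Q/\Z$, this $n$-torsion subgroup is precisely $\H^1(\Xsbns, \Zn)$. By hypothesis the latter vanishes, so $\t(\A) = 0$.

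Having established $\t(\A) = 0$, Proposition~\ref{prop:const} then applies directly to give constancy of the evaluation map $\A \colon \X(\O) \to \Br k$, completing the proof.

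The argument is essentially formal, and there is no real obstacle — the only point requiring a moment's care is the identification of the $n$-torsion in $\H^1(\Xsbns, \Q/\Z)$ with $\H^1(\Xsbns, \Zn)$, which is flagged in the notation section and follows from the short exact sequence $0 \to \Zn \to \Q/\Z \xrightarrow{n} \Q/\Z \to 0$ together with the divisibility of $\Q/\Z$. Everything else is a direct invocation of the previously established results.
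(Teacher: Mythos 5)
Your proof is correct and matches the paper's intent exactly: the paper states this corollary without proof as an immediate consequence of Proposition~\ref{prop:const}, and your reduction (noting $\t(\A)$ is $n$-torsion, hence lands in $\H^1(\Xsbns,\Zn)=0$) is precisely the intended argument. The care you take over identifying the $n$-torsion of $\H^1(\Xsbns,\Q/\Z)$ with $\H^1(\Xsbns,\Zn)$ is appropriate and consistent with the paper's stated conventions.
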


\begin{remark}
Applying this to the del Pezzo surfaces of Section~\ref{sec:sdp}, we obtain the following:
\begin{itemize}
\item Let $X$ be a del Pezzo surface of degree $4$ satisfying the conditions of Corollary~\ref{cor:dp4}.  If the singularity type of the special fibre $\Xs$ is neither $2A_1+A_3$ nor $4A_1$ then, for any $\A \in \Br X$, the evaluation map associated to $\A$ is constant.
\item Let $X$ be a del Pezzo surface of degree $3$ satisfying the conditions of Corollary~\ref{cor:dp3}.  If the singularity type of the special fibre $\Xs$ is neither $A_1+A_5$, $2A_1+A_3$, $4A_1$ nor $3A_2$ then, for any $\A \in \Br X$, the evaluation map associated to $\A$ is constant.
\end{itemize}
\end{remark}

Proposition~\ref{prop:const} is unsurprising if $\A$ is either in $\Br
k$, or in $\Br\X$ (in the latter case, the evaluation map factors through $\Br \O$, which is trivial).  The following proposition states that those
classes generate $\ker\t$.

\begin{proposition}\label{prop:kert}
The kernel of $\t$ is $\Br \X\nonp \oplus \Br k\nonp \subset \Br X\nonp$.
\end{proposition}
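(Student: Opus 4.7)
The plan is to perform a diagram chase in the commutative square~\eqref{eq:nrdiag} from Lemma~\ref{lem:nr}, using the inflation-restriction sequence
\[
0 \to \H^1(\F, \Q/\Z) \to \H^1(\Xsns, \Q/\Z) \to \H^1(\Xsbns, \Q/\Z)
\]
that already appears in the proof of that lemma, plus the functoriality of the exact sequence~\eqref{eq:es1} against the structure morphism $\X \to \Spec\O$. For the easy inclusion $\Br\X\nonp + \Br k\nonp \subset \ker\t$: any $\A \in \Br\X\nonp$ maps into $\Br\Xnr\nonp$ under base change (first column of~\eqref{eq:nrdiag}), and $\Br\Xnr\nonp$ is the kernel of the third map in the bottom row, so $\t(\A)=0$; and any $\A \in \Br k\nonp \subset \Br X\nonp$ already vanishes in $\Br X_\nr\nonp$, since $\Br k_\nr\nonp \cong \H^1(\Fb,\Q/\Z)\nonp = 0$.

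For the reverse inclusion, I would take $\A \in \Br X\nonp$ with $\t(\A)=0$ and chase: by commutativity of the right-hand square of~\eqref{eq:nrdiag}, the class $\partial(\A) \in \H^1(\Xsns, \Q/\Z)\nonp$ restricts to zero in $\H^1(\Xsbns, \Q/\Z)\nonp$. The Hochschild--Serre sequence above (exact because $\Xsbns$ is connected) then places $\partial(\A)$ in the subgroup $\H^1(\F, \Q/\Z)\nonp$. Using the class field theory isomorphism $\partial \colon \Br k\nonp \isom \H^1(\F, \Q/\Z)\nonp$, choose $\A_2 \in \Br k\nonp$ with $\partial(\A_2) = \partial(\A)$; then $\A - \A_2 \in \ker\partial = \Br\X\nonp$ by exactness of the middle row, giving $\A = (\A-\A_2) + \A_2 \in \Br\X\nonp + \Br k\nonp$.

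To see that the sum is direct, I would note that by functoriality of~\eqref{eq:es1} applied to the structure map $\X \to \Spec\O$, the restriction of $\partial$ to $\Br k\nonp \subset \Br X\nonp$ factors as the class field theory isomorphism $\Br k\nonp \isom \H^1(\F, \Q/\Z)\nonp$ followed by the inclusion $\H^1(\F, \Q/\Z)\nonp \hookrightarrow \H^1(\Xsns, \Q/\Z)\nonp$; this composite is injective, so $\Br k\nonp \cap \ker\partial = \Br k\nonp \cap \Br\X\nonp = 0$ inside $\Br X\nonp$. There is no real obstacle here; the mildest point worth being careful about is the last one, namely that the subgroup $\Br k\nonp \subset \Br X\nonp$ (defined via pullback from $\Spec k$) really behaves as expected under the residue map coming from the model $\X$, which is precisely what the functoriality of~\eqref{eq:es1} delivers.
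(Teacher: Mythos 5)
Your proof is correct and follows essentially the same route as the paper: the paper packages your diagram chase as the kernel--cokernel exact sequence of the composite $\t$, obtaining $0 \to \Br\X\nonp \to \ker\t \xrightarrow{\partial} \H^1(\F,\Q/\Z)\nonp$ and splitting it via the class field theory isomorphism on $\Br k\nonp$, which is exactly your lift of $\partial(\A)$ to $\A_2 \in \Br k\nonp$. Both arguments rest on the same inputs from Lemma~\ref{lem:nr}, namely the inflation--restriction identification of $\H^1(\F,\Q/\Z)\nonp$ with the constant classes and the functoriality of~\eqref{eq:es1} over $\X \to \Spec\O$.
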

\begin{proof}
$\t$ is defined to be composition of two homomorphisms, so we can
  describe its kernel using the kernel-cokernel exact sequence.
  Combined with the diagram of Lemma~\ref{lem:nr}, this gives an exact
  sequence
\[
0 \to \Br\X\nonp \to \ker \t \xrightarrow{\partial} \H^1(\F,\Q/\Z)\nonp.
\]
Since $\partial$ maps $\Br k\nonp \subseteq \ker\t$ isomorphically to
$\H^1(\F,\Q/\Z)\nonp$, this sequence splits.
\end{proof}

\subsection{Surjectivity of evaluation maps}\label{sec:surj}

It is natural to ask to what extent the converse of Proposition~\ref{prop:const} is true: if the evaluation of an algebra $\A$ is constant on $\X(\O)$, under what circumstances can we deduce that $\t(\A)=0$?  Certainly, the converse does not always hold: for example, if $\Xsns(\F)$ has only one point, then the evaluation map will be constant no matter what $\t(\A)$ is.  However, we shall see that this phenomenon can only happen for small residue fields $\F$, in the following sense: whenever $\F$ is sufficiently large, the evaluation map associated to an Azumaya algebra takes all possible values on points of $\X(\O)$.  Here ``takes all possible values'' means that, up to the class of a constant algebra, the image of the evaluation map is all of $\Br k[m]$, where $m$ is the order of $\t(\A)$; and the meaning of ``sufficiently large'' depends on $m$ and geometric invariants of $\Xsns$.

In order to simplify the statements of the remaining results in this section, we would like to ignore constant algebras.  A convenient way to do that is given by the following definition.

\begin{definition}
An element $\A \in \Br X$ is \emph{normalised} if there exists a point $P \in \X(\O)$ such that $\A(P) = 0$.
\end{definition}

Note that this condition is not vacuous, since by assumption $\X(\O)$
is non-empty.  Any choice of $P \in \X(\O)$ gives a retraction of the
inclusion $\Br k \to \Br X$; letting $\Br_P X$ denote the subgroup of
those $\A \in \Br X$ satisfying $\A(P)=0$, we obtain an isomorphism
$\Br X \isom \Br k \oplus \Br_P X$.  Thus any element of $\Br X$ may
be expressed as an element of $\Br k$ plus a normalised element of
$\Br X$.

\begin{lemma}\label{lem:norm}
Let $\A$ be a normalised element of $\Br X \nonp$, and suppose that $\t\A$ has order $n$ in $\H^1(\Xsbns, \Q/\Z)$.  Then $\partial\A$ has order $n$ in $\H^1(\Xsns, \Q/\Z)$, and the evaluation map $\A \colon \X(\O) \to \Br k$ takes values in $\Br k[n]$.
\end{lemma}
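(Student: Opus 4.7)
The strategy is to reduce both assertions to the single claim that $n \cdot \partial\A = 0$ in $\H^1(\Xsns, \Q/\Z)$. Once this is established, the order of $\partial\A$ divides $n$; on the other hand, because $\partial\A$ surjects onto $\t\A$ under the map $\H^1(\Xsns, \Q/\Z) \to \H^1(\Xsbns, \Q/\Z)$ appearing in Lemma~\ref{lem:nr}, its order must also be a multiple of $n$, giving the first claim. Moreover, $n\partial\A = 0$ implies via the exact sequence~\eqref{eq:es1} that $n\A \in \Br\X\nonp$, and since $\Br\O = 0$, evaluating $n\A$ at any $\O$-point yields $0$ in $\Br k$; this immediately gives the second claim.

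To show $n \partial\A = 0$, I would argue as follows. The diagram in Lemma~\ref{lem:nr} identifies $\H^1(\F, \Q/\Z)\nonp$ with the kernel of the base change $\H^1(\Xsns, \Q/\Z)\nonp \to \H^1(\Xsbns, \Q/\Z)\nonp$. Since $\t\A$ has order $n$, the class $n\partial\A$ lies in this kernel and hence in $\H^1(\F,\Q/\Z)\nonp$. Class field theory (equivalently, the splitting of the top row of Lemma~\ref{lem:nr}) furnishes a constant class $\B \in \Br k\nonp$ with $\partial\B = n\partial\A$. Then $\partial(n\A - \B) = 0$, so by the exact sequence~\eqref{eq:es1} we have $n\A - \B \in \Br\X\nonp$.

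Now I exploit normalisation. Let $P \in \X(\O)$ be the given point with $\A(P)=0$, and write $\tilde P \colon \Spec\O \to \X$ for its extension. The class $n\A - \B$ pulls back via $\tilde P$ to an element of $\Br\O = 0$, and then to $\Spec k$ as $(n\A - \B)(P) = 0$. Since $\B$ is constant, $\B(P) = \B$; combining these with $n\A(P) = 0$ gives $\B = 0$ and hence $n\partial\A = \partial\B = 0$, as required.

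The main conceptual care needed is tracking the three layers of the diagram in Lemma~\ref{lem:nr} and the distinction between $\partial\A$ and $\t\A$; once the identifications are in hand, the argument is formal and the role of the hypothesis ``normalised'' is precisely to cancel the unknown constant $\B$ when evaluating at $P$. I do not anticipate a serious technical obstacle, provided the vanishing $\Br\O\nonp = 0$ (which holds because the residue field $\F$ is finite, so $\Br\F = 0$) is invoked correctly to kill contributions coming from $\Br\X\nonp$ on $\O$-points.
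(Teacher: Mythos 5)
Your proposal is correct and follows essentially the same route as the paper: both use $\t(n\A)=0$ together with the diagram of Lemma~\ref{lem:nr} to see that $\partial(n\A)$ is a constant class, then use normalisation at $P$ to show that constant is zero. The paper phrases the last step as a direct appeal to Proposition~\ref{prop:eval} applied to $n\A$ rather than introducing the auxiliary class $\B$, but this is only a cosmetic difference.
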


\begin{proof}
By hypothesis, $\t(n\A)$ is the trivial class in $\H^1(\Xsbns, \Q/\Z)$; as in the proof of Proposition~\ref{prop:const}, we deduce that $\partial(n\A)$ lies in the image of $\H^1(\F, \Q/\Z)$.  But $n\A$ is normalised, and so Proposition~\ref{prop:eval} shows that $\partial(n\A)$ is trivial.  Therefore $\partial\A$ is of order dividing $n$ in $\H^1(\Xsns, \Q/\Z)$.  Since $\t\A$ is of order exactly $n$, we deduce that $\partial\A$ is also of order exactly $n$.  The final statement follows immediately from Proposition~\ref{prop:eval}.
\end{proof}

Let us now notice that whether the evaluation map takes any particular
value is controlled by the existence of points on a certain variety
over $\F$.  Recall that any class in $\H^1(\Xsns, \Zn)$ is represented
by a torsor $Y \to \Xsns$ under $\Zn$.  Given an element $\alpha \in
\Br k[n]$, we obtain a class $\partial \alpha \in \H^1(\F, \Zn)$;
twisting $Y \to \Xsns$ by Galois descent gives a new torsor
$Y^{\partial \alpha} \to \Xsns$, geometrically isomorphic to $Y$.  The
class of $Y^{\partial\alpha}$ in $\H^1(\Xsns, \Zn)$ is simply the sum
of the classes of $Y$ and $\partial\alpha$.  (See~\cite{Skorobogatov:TRP}, Section~2.2 and particularly p.~22.)

\begin{lemma}\label{lem:torsors}
Let $\A$ be an element of $\Br X \nonp$, and suppose that
$\partial\A \in \H^1(\Xsns,\Q/\Z)$ has order $n$.  Let $Y \to \Xsns$
be a torsor representing the class of $\partial \A$ in $\H^1(\Xsns,
\Zn)$.  Let $\alpha$ be a class in $\Br k[n]$, and let $P \in \X(\O)$
be a point reducing to $P_0 \in \Xsns(\F)$.  Then $\A(P)$ is equal to $-\alpha$ if
and only if $P_0$ lies in the image of the twisted torsor
$Y^{\partial\alpha}(\F) \to \Xsns(\F)$.  In particular, $-\alpha$ is
in the image of the evaluation map $\A\colon \X(\O) \to \Br k$ if and
only if $Y^{\partial\alpha}(\F)$ is non-empty.
\end{lemma}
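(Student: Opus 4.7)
The plan is to chase the statement through Proposition~\ref{prop:eval} (which identifies $\partial \A(P)$ with $P_0^* \partial \A$) and then use the standard identification of $\F$-points of a torsor with points of the base whose pull-back class is trivial. Specifically, write $p \colon \Xsns \to \Spec \F$ for the structure map. I first claim that, as classes in $\H^1(\Xsns, \Zn)$,
\[
[Y^{\partial \alpha}] = [Y] + p^*(\partial \alpha),
\]
which is the definition of the twist recalled just before the lemma. Pulling back along the $\F$-point $P_0$ and using $p \circ P_0 = \mathrm{id}_{\Spec \F}$, I obtain
\[
P_0^*[Y^{\partial \alpha}] = P_0^*(\partial \A) + \partial \alpha \quad \text{in } \H^1(\F, \Zn).
\]

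Next I apply Proposition~\ref{prop:eval} (noting that $\partial \A$ has order $n$, so lies in the $n$-torsion $\H^1(\Xsns, \Zn)$) together with the class field theory isomorphism $\partial \colon \Br k [n] \to \H^1(\F, \Zn)$: the equality $\A(P) = -\alpha$ in $\Br k[n]$ is equivalent to $\partial \A(P) = -\partial \alpha$, which by Proposition~\ref{prop:eval} is $P_0^*(\partial \A) = -\partial \alpha$. Combined with the twist formula above, this is exactly the condition $P_0^*[Y^{\partial \alpha}] = 0$ in $\H^1(\F,\Zn)$. Finally, since $\Zn$-torsors over $\Spec \F$ are classified by $\H^1(\F, \Zn)$ and the fibre of $Y^{\partial \alpha} \to \Xsns$ over $P_0$ is the torsor corresponding to $P_0^*[Y^{\partial \alpha}]$, triviality of this class is equivalent to $P_0$ lying in the image of $Y^{\partial \alpha}(\F) \to \Xsns(\F)$. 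This gives the first assertion.

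For the ``in particular'' statement, the only subtlety is the passage from $\F$-points of $\Xsns$ to $\O$-points of $\X$: I need every $P_0 \in \Xsns(\F)$ that lifts to $Y^{\partial \alpha}(\F)$ to arise as the reduction of some $P \in \X(\O)$. Since $\X$ is regular and $P_0$ lies in the smooth locus of $\Xs$, the local ring $\O_{\X,P_0}$ is regular and $\pi$ is part of a system of parameters, so $\X \to \Spec \O$ is smooth at $P_0$. Hensel's lemma (applied to $\O$, which is henselian) then produces the required lift $P \in \X(\O)$ reducing to $P_0$. Combined with the first part, this shows that $-\alpha$ lies in the image of the evaluation map precisely when $Y^{\partial \alpha}(\F)$ is non-empty.

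The only non-routine step is the verification of the twist formula $[Y^{\partial \alpha}] = [Y] + p^*(\partial \alpha)$, which I would take from the reference to~\cite{Skorobogatov:TRP} cited just above the lemma; everything else is functoriality of the Gysin boundary together with class field theory and Hensel lifting.
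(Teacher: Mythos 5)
Your proof is correct and follows essentially the same route as the paper: identify the class of the fibre $(Y^{\partial\alpha})_{P_0}$ via Proposition~\ref{prop:eval} and the additivity of twisting, and observe that triviality of that class is equivalent to $P_0$ lifting to $Y^{\partial\alpha}(\F)$. Your extra paragraph on Hensel lifting of smooth $\F$-points of $\Xsns$ to $\O$-points of $\X$ addresses a step the paper leaves implicit for the ``in particular'' clause, and is a welcome (and correct, since $\F$ is perfect and $\X$ is flat over $\O$) addition.
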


\begin{proof}
By Proposition~\ref{prop:eval}, we have $\A(P)=-\alpha$ if and only if the class in $\H^1(\F,\Zn)$ of the fibre $Y_{P_0}$ is $\partial\alpha$.  Twisting by $\partial\alpha$, this is true if and only if the class of the fibre $(Y^{\partial\alpha})_{P_0}$ is trivial.  This happens precisely when that fibre contains an $\F$-rational point, that is, when $P_0$ lies in the image of $Y^{\partial\alpha}(\F) \to \Xsns(\F)$.
\end{proof}

Lemma~\ref{lem:torsors} shows that, to prove that the evaluation map associated to an Azumaya algebra takes a particular value, it is enough to prove existence of rational points on a certain torsor over $\Xsns$.  A method to guarantee the existence of points on varieties over finite fields comes from the Weil conjectures.
\begin{fact}\label{weil}
Let $Y$ be a geometrically irreducible variety over a finite field
$\F$, and denote by $\bar{Y}$ the base change of $Y$ to the algebraic
closure of $\F$.  Then there is a bound $B(\bar{Y})$, depending only on
cohomological invariants of $\bar{Y}$, such that, whenever $|\F| >
B(\bar{Y})$, then $Y(\F)$ is non-empty.
\end{fact}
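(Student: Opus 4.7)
The plan is to derive this estimate from the Grothendieck--Lefschetz trace formula combined with Deligne's purity theorem (Weil~II). Fix an auxiliary prime $\ell$ distinct from the characteristic of $\F$, and set $d = \dim Y$. First I would invoke the trace formula for compactly supported $\ell$-adic cohomology:
\[
\#Y(\F) = \sum_{i=0}^{2d} (-1)^i \operatorname{tr}\bigl(\operatorname{Frob}_\F \,\bigm|\, \H^i_c(\bar{Y}, \Q_\ell)\bigr).
\]
The reason for working with compactly supported cohomology is that this version of the trace formula holds for any separated scheme of finite type over $\F$, so that no properness or smoothness hypothesis on $Y$ is needed.

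Next I would isolate the leading term. Geometric irreducibility of $Y$ of dimension $d$ yields the canonical identification $\H^{2d}_c(\bar{Y}, \Q_\ell) \isom \Q_\ell(-d)$, so the $i = 2d$ contribution equals exactly $|\F|^d$. For $i < 2d$, Deligne's theorem asserts that every eigenvalue of Frobenius on $\H^i_c(\bar{Y}, \Q_\ell)$ has complex absolute value at most $|\F|^{i/2}$. Writing $b_i^c(\bar{Y})$ for $\dim_{\Q_\ell} \H^i_c(\bar{Y}, \Q_\ell)$, one therefore obtains an estimate of the shape
\[
\bigl| \#Y(\F) - |\F|^d \bigr| \leq \Bigl( \sum_{i=0}^{2d-1} b_i^c(\bar{Y}) \Bigr) |\F|^{d - 1/2}.
\]
Taking $B(\bar{Y})$ to be, for instance, the square of the parenthesised sum of Betti numbers, any $|\F| > B(\bar{Y})$ makes the error term strictly smaller than the main term and hence forces $Y(\F) \neq \emptyset$.

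The main obstacle, such as it is, lies in invoking the correct form of the two cohomological inputs: the identification $\H^{2d}_c(\bar{Y}, \Q_\ell) \isom \Q_\ell(-d)$ for any geometrically irreducible variety over $\F$ without smoothness or properness hypotheses, and Deligne's purity bound on $\H^i_c$ in the same generality. Both are standard consequences of Weil~II, but it is worth being explicit that the statement must apply to arbitrary separated finite-type $\F$-schemes, since the torsors $Y^{\partial\alpha}$ arising from Lemma~\ref{lem:torsors} are in general neither proper nor smooth. Once these inputs are in hand the rest of the proof reduces to the elementary numerical inequality above, and the bound $B(\bar{Y})$ depends only on cohomological invariants of $\bar{Y}$ as claimed.
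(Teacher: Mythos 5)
Your proposal is correct and follows exactly the route the paper indicates: the Grothendieck--Lefschetz trace formula in compactly supported $\ell$-adic cohomology, the identification of the top cohomology of a geometrically irreducible variety giving the main term $|\F|^d$, and Deligne's Weil~II bound on the lower-degree eigenvalues, yielding a bound $B(\bar{Y})$ depending only on the compactly supported Betti numbers. Your explicit attention to the fact that the inputs must hold for arbitrary separated finite-type schemes (since the relevant torsors are neither smooth nor proper) is a worthwhile point that the paper leaves implicit.
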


This fact follows from the generalised form of the Weil conjectures proved by Deligne~\cite{Deligne:WeilII} together with the Lefschetz trace formula.  More specifically, the bound depends on the Betti numbers of $\bar{Y}$ in compactly supported $\ell$-adic cohomology.

\begin{remark}
The usefulness of this fact is that the bound $B(\bar{Y})$ depends not
on the precise geometry of $\bar{Y}$, but only on certain geometric invariants.
The existence of such a bound is the famous result of Lang and Weil~\cite{LW:AJM-1954}, who use much more elementary methods than those of Deligne.  They show that, for a projective variety in $\P^n$, a bound can be chosen that depends only on $n$ and the dimension and degree of the variety.  For many of our applications, and in particular for all the corollaries to Theorem~\ref{thm:weil} below, this is enough.
The only reason for appealing to Deligne instead of Lang--Weil is that many natural applications of the results in this section arise in the context of a family of varieties, and Deligne's statements often immediately show how the bound $B(Y)$ varies as $Y$ moves in a family.  For example, Betti numbers are constant in a smooth, proper family of varieties, and so we see immediately that the same bound will work for every member of such a family.  
% More generally, constructibility of push-forwards shows that Betti numbers are bounded in \emph{any} family of varieties, so there is a single bound which works for every variety in the family.
\end{remark}

%The following lemma is presumably a well-known piece of Kummer theory.

%\begin{lemma}\label{lem:kummer}
%Let $Y$ be an integral (normal?) scheme admitting a primitive $n$th root of
%unity, where $n$ is invertible on $Y$.  Let $Z \to Y$ be a
%Galois covering with Galois group $\Zn$, and let $\alpha \in \H^1(Y,
%\Zn)$ be the corresponding cohomology class.  Then $Z$ is irreducible
%if and only if $\alpha$ is of order $n$.
%\end{lemma}
%\begin{proof}
%Let us first prove the lemma in the case that $Y=\Spec K$ for a field
%$K$.  In that case, we have $Z = \Spec L$ where $L$ is a finite \'etale
%$K$-algebra.  Kummer theory for fields shows that $L \isom
%K[x]/(x^n-f)$, where $f \in K^\times$ corresponds to $\alpha$ under
%the isomorphism $K^\times / (K^\times)^n \isom \H^1(K,\Zn)$.  So
%$\Spec L$ is irreducible if and only if the polynomial $x^n-f$ is irreducible.  That
%happens precisely when the class of $f$ is of order $n$ in
%$K^\times / (K^\times)^n$.

%In the general case, let $j\colon \Spec K \to Y$ be the inclusion of
%the generic point.  Looking at the generic fibre $Z_K = Z \times_Y
%\Spec K$, the irreducible components of $Z_K$ are in
%bijection with those of $Z$, by~\cite[0.2.1.8]{EGA1}.  On the
%other hand, the map $j^* \colon \H^1(Y, \Zn) \to \H^1(K, \Zn)$ is
%injective, as shown by the Leray spectral sequence for $j$ and the
%isomorphism $j_* (\Zn)_K \isom (\Zn)_Y$.  Therefore $\alpha$ is of
%order $n$ in $\H^1(Y, \Zn)$ if and only if $j^* \alpha$ is of
%order $n$ in $\H^1(K, \Zn)$.  So the general result follows
%from the special case proved above.
%\end{proof}

In order to apply Fact~\ref{weil} to torsors, we need to understand
when a torsor is geometrically irreducible.  Let $S$ be a scheme, and $G$ a finite
Abelian group.  Recall that the
cohomology group $\H^1(S, G)$ classifies $S$-torsors under $G$;
these are the same as Galois covers of $S$ with group $G$, and so
are also classified by homomorphisms from $\pi_1(S)$ to $G$.  This
correspondence gives an isomorphism of groups $\H^1(S,G) \cong
\Hom(\pi_1(S),G)$.
\begin{lemma}\label{lem:conn}
Suppose that $S$ is normal and connected.  Then, under the isomorphism $\H^1(S,G) \cong \Hom(\pi_1(S),G)$, the
\emph{connected} torsors correspond to the \emph{surjective}
homomorphisms.  In particular, if $G$ is the cyclic group $\Zn$, then
the connected torsors are those representing classes of exact order
$n$ in $\H^1(S,G)$.
\end{lemma}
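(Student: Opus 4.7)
The plan is to unpack the standard dictionary between étale torsors, Galois covers and $\pi_1$-sets, and read off connectedness in terms of the image of the classifying homomorphism. Concretely, a torsor $f\colon Y \to S$ under the constant group $G$ is a pointed Galois étale cover (after choosing a geometric base point in $Y$), and under the equivalence between finite étale covers of $S$ and finite continuous $\pi_1(S,\bar s)$-sets, $Y$ corresponds to the set $G$ equipped with the action $\gamma \cdot g = \phi(\gamma)\cdot g$, where $\phi\colon \pi_1(S,\bar s)\to G$ is the homomorphism classifying the torsor.

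The first step is to recall that, since $S$ is normal and connected, $S$ is geometrically unibranch, so every finite étale cover of $S$ decomposes into connected components that are themselves finite étale over $S$, and the components of $Y$ are in bijection with the orbits of $\pi_1(S,\bar s)$ on the underlying $\pi_1$-set. The second step is the straightforward computation that the orbits of $\pi_1(S,\bar s)$ on $G$ under the action above are exactly the cosets of $\phi(\pi_1(S,\bar s))$ in $G$; in particular, there is a single orbit if and only if $\phi$ is surjective. Combining these two steps gives the first claim: $Y$ is connected iff $\phi$ is surjective.

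The second assertion is then immediate: for $G = \Z/n$, a homomorphism $\phi\colon \pi_1(S,\bar s) \to \Z/n$ is surjective iff its image is all of $\Z/n$, iff the corresponding class in $\H^1(S,\Z/n) \cong \Hom(\pi_1(S,\bar s),\Z/n)$ has order exactly $n$ (rather than a proper divisor of $n$).

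I do not anticipate a serious obstacle: the only mildly delicate point is knowing that connected components of a finite étale cover of a normal connected scheme really are cut out by $\pi_1$-orbits, which uses normality (or geometric unibranchness) to ensure that the decomposition of $Y$ into its connected components is itself finite étale over $S$; otherwise the argument is a direct application of the standard Galois correspondence for $\pi_1$.
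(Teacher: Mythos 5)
Your proof is correct, and it reaches the same essential computation as the paper (surjective image of $\phi$ $\iff$ transitive monodromy action $\iff$ connected cover), but it gets there by a different route. The paper first uses normality in an essential way: by \cite[Expos\'e I, Section~10]{SGA1}, passing to generic fibres gives a connectedness-preserving equivalence between finite \'etale covers of $S$ and \'etale algebras over the function field $K$, so the whole lemma reduces to the case $S = \Spec K$, where the claim is verified by an explicit computation with the Galois action on the geometric points of the torsor. You instead invoke the Galois-category correspondence between finite \'etale covers of a connected scheme and finite continuous $\pi_1(S,\bar s)$-sets directly on $S$, and identify components with orbits, which for the set $G$ acted on through $\phi$ by translation are the cosets of $\phi(\pi_1(S,\bar s))$. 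Both are fine; your version is arguably cleaner in that it does not need to descend to the generic point. One small correction: the hypothesis of normality is not actually what makes your argument work. For any connected, locally noetherian $S$ a finite \'etale cover has finitely many connected components, each open and closed and hence again finite \'etale over $S$, and the components-equal-orbits statement is part of the general formalism of Galois categories (SGA1, Expos\'e V) with no unibranchness needed. Normality is genuinely used only in the paper's reduction-to-the-function-field step, so in your approach you could simply drop that appeal rather than attribute the component decomposition to it.
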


\begin{proof}
Let $K$ denote the function field of $S$.  Because $S$ is normal,
taking generic fibres induces a one-to-one correspondence from finite
\'etale covers of $S$ to \'etale algebras over $K$, preserving connectedness
(see~\cite[Expos\'e I, Section~10]{SGA1}); so it suffices to prove the
theorem when $S$ is the spectrum of a field.  In this case, let $T \to
K$ be a torsor under $G$, and recall the definition of the associated
element $\phi \in \Hom(\Gal(\bar{K}/K),G)$: pick a geometric point $x$
of $T$; then, for $\sigma \in \Gal(\bar{K}/K)$, we define
$\phi(\sigma)$ to be the unique $g \in G$ such that $\sigma x = gx$.
From this description it is easy to see that $\phi$ is surjective if
and only if the Galois action on the geometric points of $T$ is
transitive, that is, if and only if $T$ is connected.
\end{proof}

Armed with this, we can prove a partial converse to
Proposition~\ref{prop:const}.  The following theorem is a special case
of Theorem~\ref{thm:indep} below, but we state and prove it separately both
because it motivates the statement and proof of
Theorem~\ref{thm:indep} and because it has corollaries of independent
interest.

\begin{theorem}\label{thm:weil}
Fix a class $c \in \H^1(\Xsbns,\Q/\Z)$, of order $n$ coprime to
$p$.  Then there is a constant $B$, depending only on $c$, such
that, whenever $|\F| > B$ and for all normalised $\A \in \Br X$ such that $\t\A =
c$, the evaluation map $\A \colon \X(\O) \to \Br k[n]$ is
surjective.  If $\bar{Y} \to \Xsbns$ is a torsor representing the
class of $c$ in $\H^1(\Xsbns,\Zn)$, then we can take $B$ to be $B(\bar{Y})$.
\end{theorem}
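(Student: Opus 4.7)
The plan is to combine Lemmas~\ref{lem:norm}, \ref{lem:torsors} and~\ref{lem:conn} with the Weil-type bound recorded in Fact~\ref{weil}. First, fix a normalised $\A \in \Br X \nonp$ with $\t\A = c$. Lemma~\ref{lem:norm} tells me that $\partial\A$ has order exactly $n$ in $\H^1(\Xsns, \Q/\Z)$ and that the evaluation map $\A \colon \X(\O) \to \Br k$ takes values in $\Br k[n]$, so surjectivity is the assertion that every element of $\Br k[n]$ is hit. Let $Y \to \Xsns$ be a $\Zn$-torsor representing $\partial\A$; its base change $Y \times_\F \bar\F$ is a torsor over $\Xsbns$ whose class in $\H^1(\Xsbns, \Zn)$ is $c$, so it is isomorphic to $\bar Y$.

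By Lemma~\ref{lem:torsors}, a class $-\alpha \in \Br k[n]$ lies in the image of $\A$ if and only if $Y^{\partial\alpha}(\F)$ is non-empty. Twisting by a class pulled back from $\H^1(\F, \Zn)$ is a Galois-descent operation that leaves the geometric torsor untouched, so the base change $Y^{\partial\alpha} \times_\F \bar\F$ is again isomorphic to $\bar Y$. Hence all the twists $Y^{\partial\alpha}$ share the same $\ell$-adic Betti numbers, and a single bound from Fact~\ref{weil} applies uniformly to all of them, provided $\bar Y$ is geometrically irreducible.

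For geometric irreducibility, $\bar Y \to \Xsbns$ is \'etale (since $n$ is coprime to $p$) and $\Xsbns$ is smooth over $\bar\F$, so $\bar Y$ is normal; it remains to show that $\bar Y$ is connected. The standing assumptions in Section~\ref{sec:prelim} ensure that $\Xsbns$ is connected, so Lemma~\ref{lem:conn} identifies the connected $\Zn$-torsors on $\Xsbns$ with the classes of exact order $n$ in $\H^1(\Xsbns, \Zn)$. Since by hypothesis $c$ has exact order $n$, the torsor $\bar Y$ is connected, hence irreducible. Fact~\ref{weil} then provides a bound $B = B(\bar Y)$, depending only on the cohomological invariants of $\bar Y$ and thus only on $c$, such that whenever $|\F| > B$ every $Y^{\partial\alpha}$ has an $\F$-point; combined with Lemma~\ref{lem:torsors}, this gives surjectivity of $\A \colon \X(\O) \to \Br k[n]$. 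The main subtlety I anticipate is precisely this uniformity: one has to realise that the single geometric object $\bar Y$ governs every twist $Y^{\partial\alpha}$ and every normalised lift of $c$, so that one Weil bound handles all parameters at once.
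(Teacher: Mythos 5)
Your proof is correct and follows essentially the same route as the paper: Lemma~\ref{lem:norm} to pin down the order of $\partial\A$, Lemma~\ref{lem:conn} (via the connectedness of $\Xsbns$) to see that $\bar Y$ is connected and hence irreducible, Fact~\ref{weil} applied uniformly to $Y$ and all its twists, and Lemma~\ref{lem:torsors} to translate existence of $\F$-points on the twists into surjectivity of the evaluation map. The uniformity point you flag — that a single geometric object $\bar Y$ controls every twist and every normalised lift of $c$ — is exactly the content of the paper's argument.
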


\begin{proof}
By Lemma~\ref{lem:norm}, $\partial\A$ has order $n$ in $\H^1(\Xsns,
\Q/\Z)$, and so we may think of $\partial\A$ as lying in $\H^1(\Xsns,
\Zn)$.  Let $Y \to \Xsns$ be a torsor representing the class of
$\partial \A$ in $\H^1(\Xsns, \Zn)$.  Then $\bar{Y} \to \Xsbns$ is a
torsor representing $c$ in $\H^1(\Xsbns, \Zn)$; by hypothesis, $c$ is
of order $n$, and so Lemma~\ref{lem:conn} shows that $\bar{Y}$ is connected.  Since $\bar{Y}$ is also
smooth, $\bar{Y}$ is irreducible.  If $|\F|>B(\bar{Y})$,
then Fact~\ref{weil} shows that $Y$ has an $\F$-point, as does every
Galois twist of $Y$.  By Lemma~\ref{lem:torsors} the evaluation map
$\A\colon \X_i(\O) \to \Br k[n]$ is surjective.
\end{proof}

\begin{corollary}\label{cor:weil2}
Fix an integer $n$ coprime to $p$.  Then there is a constant
$B$, depending only on $\Xsbns$ and $n$, such that, whenever $|\F| >
B$ and for all normalised $\A \in \Br X$ such that $\t\A$ has order $n$,
the evaluation map $\A \colon \X(\O) \to \Br k[n]$ is surjective.
\end{corollary}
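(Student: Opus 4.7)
The plan is to deduce Corollary~\ref{cor:weil2} from Theorem~\ref{thm:weil} by taking a supremum over the finitely many possible values of $\t\A$. Suppose $\A \in \Br X$ is normalised with $\t\A$ of order exactly $n$. Then $c := \t\A$ is a class of exact order $n$ in $\H^1(\Xsbns, \Zn) \subseteq \H^1(\Xsbns, \Q/\Z)$, and Theorem~\ref{thm:weil}, applied to this $c$, provides a bound $B_c$ (we may take $B_c = B(\bar{Y}_c)$ for any torsor $\bar{Y}_c \to \Xsbns$ representing $c$) such that whenever $|\F| > B_c$ the evaluation map $\A \colon \X(\O) \to \Br k[n]$ is surjective. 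Since $B_c$ depends on $\A$ only through $\t\A$, if we can produce a single uniform $B$ dominating all the $B_c$ arising this way, we are done.

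The essential point is therefore that the set of possible values of $c$ is finite. In fact, $\H^1(\Xsbns, \Zn)$ itself is finite: $\Xsbns$ is a smooth, connected scheme of finite type over the algebraically closed field $\Fb$, and $n$ is coprime to the residue characteristic, so this follows from the standard finiteness of étale cohomology with constructible torsion coefficients prime to the characteristic (equivalently, topological finite generation of the prime-to-$p$ quotient of the étale fundamental group of a connected variety over an algebraically closed field). There are therefore only finitely many classes of exact order $n$ in $\H^1(\Xsbns, \Zn)$, and we may simply define
\[
B = \max_{c} B_c,
\]
the maximum being taken over this finite set. This $B$ depends only on $\Xsbns$ and $n$, and Theorem~\ref{thm:weil} applied to each $c$ individually then yields the claim.

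The only real obstacle is confirming the finiteness of $\H^1(\Xsbns, \Zn)$; once this is observed, the argument reduces to quoting Theorem~\ref{thm:weil}. An alternative, should one wish to avoid invoking general finiteness results, is to note that every connected $\Zn$-torsor $\bar{Y} \to \Xsbns$ is étale of degree $n$, so its compactly supported $\ell$-adic Betti numbers are controlled by those of $\Xsbns$ and by $n$; the bounds $B(\bar{Y})$ supplied by Fact~\ref{weil} are then already uniformly bounded, yielding a single $B$ directly.
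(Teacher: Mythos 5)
Your argument is correct and is essentially the paper's own proof: both reduce to Theorem~\ref{thm:weil} via the finiteness of $\H^1(\Xsbns,\Zn)$ (the paper cites \cite[VI, Corollary~5.5]{Milne:EC} for this) and then take the maximum of the resulting bounds over the finitely many classes. No issues.
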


\begin{proof}
The group $\H^1(\Xsbns, \Zn)$ is finite~\cite[VI,
  Corollary~5.5]{Milne:EC}.  So we may apply Theorem~\ref{thm:weil} to
each class $c \in \H^1(\Xsbns, \Zn)$ and take $B$ to be as large as
necessary for the conclusion to hold in all cases.
\end{proof}

\begin{corollary}\label{cor:ext}
Let $\A$ be a normalised element of $\Br X \nonp$,
and suppose that $\t\A$ is of order $n$ in $\H^1(\Xsbns, \Q/\Z)$.
If $\ell$ is an unramified extension of $k$ of sufficiently large
degree, with ring of integers $\O_\ell$, then the evaluation map
$\A\colon \X(\O_\ell) \to \Br k[n]$ is surjective.
\end{corollary}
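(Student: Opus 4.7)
The plan is to deduce the statement directly from Corollary~\ref{cor:weil2} after unramified base change. Write $d = [\ell:k]$ and $\X_\ell = \X \times_\O \O_\ell$; since $\O_\ell/\O$ is \'etale, $\X_\ell$ is again a regular, flat, separated scheme of finite type over $\O_\ell$, with residue field $\F_d$ of cardinality $|\F|^d$. Crucially, after base change to $\Fb$ the non-singular locus of the special fibre is still $\Xsbns$, so the diagram~\eqref{eq:nrdiag} used to define $\t$ is unchanged. Consequently, writing $\A_\ell$ for the pullback of $\A$ to $X \times_k \ell$, the class $\t\A_\ell$ agrees with $\t\A$ and in particular still has order $n$ in $\H^1(\Xsbns,\Q/\Z)$.

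Next I would verify that $\A_\ell$ is itself normalised. This is immediate: if $P \in \X(\O)$ is chosen to witness $\A(P) = 0$, then its image $P_\ell \in \X(\O_\ell)$ satisfies $\A_\ell(P_\ell) = 0$ by functoriality of pullback.

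Finally, apply Corollary~\ref{cor:weil2} to $\X_\ell$ and the integer $n$: this produces a constant $B$ such that, whenever $|\F_d| > B$, the evaluation map associated to $\A_\ell$ is surjective onto the $n$-torsion of the Brauer group of $\ell$. The key observation is that $B$ depends only on $\Xsbns$ and $n$, and emphatically not on the extension $\ell$ --- indeed, tracing back through Theorem~\ref{thm:weil} and Fact~\ref{weil}, $B$ is extracted from cohomological invariants of the geometric torsor $\bar Y \to \Xsbns$ representing $\t\A$, an object that is unchanged under unramified base change. Since $|\F_d| = |\F|^d \to \infty$ with $d$, the required inequality holds for all sufficiently large $d$, giving the conclusion. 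The only slightly delicate point in the argument is precisely this independence of $B$ from $\ell$; otherwise the proof is a routine base-change manipulation.
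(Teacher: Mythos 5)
Your proof is correct and follows essentially the same route as the paper: base-change to $\O_\ell$, observe that $\Xsbns$ and $\t\A$ (hence the torsor $\bar Y$ and the bound $B$) are unchanged, and invoke Theorem~\ref{thm:weil}/Corollary~\ref{cor:weil2} once the residue field is large enough. The point you flag as delicate --- that $B$ depends only on the geometric data and not on $\ell$ --- is exactly the observation the paper relies on.
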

\begin{proof}
Replacing $\X$ by its base change to $\O_\ell$ changes neither
$\Xsbns$ nor $\t\A$.  So Theorem~\ref{thm:weil} shows that the
conclusion will hold as long as the size of the residue field of
$\ell$ is greater than $B$.
\end{proof}

We can use Corollary~\ref{cor:ext} to obtain further corollaries about zero-cycles on $X$; 
first let us briefly recall some notation.  A \emph{zero-cycle} on a variety $X$ is a finite formal linear combination $\sum n_i P_i$ of closed points $P_i$.  The group of zero-cycles on $X$ is denoted $Z_0(X)$.  Given $\A \in \Br X$, we define $\A(\sum n_i P_i) = \sum n_i \cores_{k(P_i)/k} \A(P_i)$, giving a pairing $\Br X \times Z_0(X) \to \Br k$.  
%We can define \emph{rational equivalence} of zero-cycles; the group of zero-cycles modulo rational equivalence is denoted $CH_0(X)$.  The evaluation map respects rational equivalence [ref?], and so we obtain a pairing $\Br X \times CH_0(X) \to \Br k$.  
The \emph{degree} of the zero-cycle $\sum n_i P_i$ is $\sum n_i[k(P_i):k]$.  Let $Z_0^0(X)$ denote the group of zero-cycles of degree $0$.  If $\A \in \Br k$, then we have $\A(z)=0$ for any $z \in Z_0^0(X)$; so we obtain a pairing $(\Br X / \Br k) \times Z_0^0(X) \to \Br k$.  
%If $X$ is proper over $k$, then rational equivalence respects degree; the group of zero-cycles of degree zero modulo rational equivalence is denoted $A_0(X)$, and there is an induced pairing $(\Br X / \Br k) \times A_0(X) \to \Br k$.
For each of these pairings, we may try to describe the left kernel.

In our situation, we need to talk about the group of zero-cycles on $X$ which extend to a section of $\X$, and so specialise to a point of the special fibre.  Denote this group by $Z_0(X,\X)$.  Equivalently, $Z_0(X,\X)$ is the image of the (injective) natural map $Z_1(\X) \to Z_0(X)$.

\begin{corollary}\label{cor:z0}
Let $\A$ be an element of $\Br X\nonp$, and suppose that $\t(\A)$ has order $n$ in $\H^1(\Xsbns, \Q/\Z)$.  Then the image of the evaluation map $\A \colon Z_0^0(X,\X) \to \Br k$ is $\Br k[n]$.
\end{corollary}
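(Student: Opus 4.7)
The plan is to reduce surjectivity to Corollary~\ref{cor:ext} by pushing forward zero-cycles from a suitable unramified base extension, and to extract the containment directly from Lemma~\ref{lem:norm}. As a preliminary step I would fix a base point $P_0 \in \X(\O)$ (non-empty by assumption) and replace $\A$ by $\A - \A(P_0)$. This does not alter $\t\A$, and since the correction lies in $\Br k$ and degrees are additive, it does not change the value of $\A$ on any degree-zero zero-cycle. Hence I may assume throughout that $\A$ is normalised at $P_0$.

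For the containment in $\Br k[n]$: for each closed point $\bar P$ in the support of some $z \in Z_0(X,\X)$, the defining property of $Z_0(X,\X)$ supplies a section $Q\colon \Spec \O_{k(\bar P)} \to \X$ lifting $\bar P$. I would apply Lemma~\ref{lem:norm} to the base change $\X \times_\O \O_{k(\bar P)}$, observing that $\Xsbns$ is unchanged by this base change, so $\t\A$ retains order~$n$, and $Q$ still normalises the pulled-back class. The lemma then gives $\A|_{\bar P} \in \Br k(\bar P)[n]$. Since corestriction preserves $n$-torsion, $\A(z) \in \Br k[n]$ for every $z \in Z_0^0(X,\X)$.

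For surjectivity, given $\alpha \in \Br k[n]$, I would use Corollary~\ref{cor:ext} to pick an unramified extension $\ell/k$ of sufficiently large degree $d$ and a point $P \in \X(\O_\ell)$ with $\inv_\ell P^*\A = \inv_k \alpha$. Let $\bar P$ be the closed point of $X$ obtained as the image of the generic fibre of $P$, and set $e = [k(\bar P):k]$, so $e \mid d$. The morphism $P$ is proper (as $\Spec \O_\ell/\O$ is proper and $\X/\O$ separated), so its image is a closed subscheme of $\X$ proper over $\O$; its normalisation furnishes the required section $\Spec \O_{k(\bar P)} \to \X$ lifting $\bar P$, so $\bar P \in Z_0(X,\X)$, and the cycle-theoretic pushforward of $P$ is $(d/e)\cdot \bar P$, a zero-cycle of degree $d$. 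Using the local class field theory identities $\inv_\ell \res_{\ell/k(\bar P)} = (d/e)\inv_{k(\bar P)}$ and $\inv_k \cores_{k(\bar P)/k} = \inv_{k(\bar P)}$, one computes
\[
\inv_k \A\bigl((d/e)\cdot \bar P\bigr) \;=\; (d/e)\cdot \inv_{k(\bar P)} \A|_{\bar P} \;=\; \inv_\ell P^*\A \;=\; \inv_k \alpha,
\]
so $z_0 := (d/e)\cdot \bar P$ satisfies $\A(z_0) = \alpha$. Finally, $z := z_0 - d\cdot [P_0] \in Z_0^0(X,\X)$ has $\A(z) = \alpha$ since $\A(P_0) = 0$.

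The main obstacle is the bookkeeping of invariants across the tower $\ell / k(\bar P) / k$: the multiplicity $d/e$ appearing in the pushforward cycle and the factor $[\ell:k(\bar P)] = d/e$ appearing in the restriction of invariants must match up exactly, which they do thanks to the two standard identities above. Everything else is a straightforward application of results already established.
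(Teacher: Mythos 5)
Your surjectivity argument is exactly the paper's: normalise at a base point $P_0$, apply Corollary~\ref{cor:ext} over a large unramified extension to hit a prescribed class in $\Br k[n]$, push the resulting integral point forward to a zero-cycle of degree $d$ in $Z_0(X,\X)$, and subtract $d\cdot[P_0]$; your explicit bookkeeping of the multiplicity $d/e$ via $\inv_\ell\circ\res_{\ell/k(\bar P)}=(d/e)\inv_{k(\bar P)}$ and $\inv_k\circ\cores_{k(\bar P)/k}=\inv_{k(\bar P)}$ just makes precise what the paper dismisses as ``straightforward to check''. The one step that does not work as written is the containment $\A(Z_0^0(X,\X))\subseteq\Br k[n]$: a closed point $\bar P$ in the support of a cycle in $Z_0(X,\X)$ may have residue field $k(\bar P)$ \emph{ramified} over $k$, in which case $\X\times_\O\O_{k(\bar P)}$ need not be regular (and $\bar P$ may even specialise to a singular point of $\Xs$), so the standing hypotheses of Section~\ref{sec:eval} fail for that base change and Lemma~\ref{lem:norm} cannot be invoked over it; also, it is the base change of $P_0$, not $Q$ itself, that normalises the pulled-back class. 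The containment is nevertheless true and easy to repair: since $\t(n\A)=n\,\t(\A)=0$, Proposition~\ref{prop:kert} gives $n\A\in\Br\X\nonp\oplus\Br k\nonp$, and evaluating at $P_0$ (where $n\A$ vanishes and $\Br\O=0$) kills the constant summand, so $n\A\in\Br\X\nonp$; any element of $\Br\X$ evaluates to zero on every cycle in $Z_0(X,\X)$, because its value at each supporting closed point factors through $\Br\O_{k(\bar P)}=0$.
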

\begin{proof}
By assumption, $\X(\O)$ is non-empty; let $P \in X(k)$ be a $k$-point of $X$ that extends to an $\O$-point of $\X$.  Replacing $\A$ by $\A - \A(P)$, which affects neither the hypotheses nor the conclusion of the corollary, we reduce to the case in which $\A(P)=0$, that is, $\A$ is normalised at $P$.  Let $k_d$ be the unramified extension of $k$ of degree $d$.  Corollary~\ref{cor:ext} shows that, assuming $d$ to be sufficiently large, the evaluation map $\A \colon \X(\O_{k_d}) \to \Br k_d[n]$ is surjective.  It is straightforward to check that evaluating $\A$ at a point of $X(k_d)$ gives the same element of $\Br k_d$ as evaluating $\A$ at the corresponding closed point of $X$.  That the corestriction map from $\Br k_d$ to $\Br k$ is an isomorphism follows easily from~\cite[Chapter~XIII, Proposition~7]{Serre:LF}.  We deduce that the evaluation map associated to $\A$ takes the subset of zero-cycles of degree $d$ in $Z_0(X,\Z)$ surjectively onto $\Br k[n]$.  Subtracting the zero-cycle $dP$ gives the same result for zero-cycles of degree zero, as claimed.
\end{proof}

\begin{remark}
As we have seen, an element of $\Br X$ can be evaluated both at points of $X$ and at zero-cycles.  The combined message of Theorem~\ref{thm:weil} and Corollary~\ref{cor:z0} is that the images of these two evaluation maps can only differ when the residue field is small.  Applying this to varieties over global fields, we see that any difference between the Brauer--Manin obstruction to rational points and that to zero-cycles of degree one happens at small primes.  As an illustration, consider the diagonal quartic surface $X \subset \P^3_\Q$, defined by the equation
\[
X_0^4 + 47 X_1^4 = 103 X_3^4 + (17 \times 47 \times 103) X_4^4,
\]
described in Proposition~3.3 of~\cite{Bright:vanishing}.  There, it is shown that $X$ has points everywhere locally, that $\Br X / \Br \Q$ has order $2$, and that the non-trivial class gives an obstruction to the existence of rational points on $X$.  More explicitly, the non-constant class in $\Br X$ is represented by the algebra $\A = (17,f/X_0^2)$, where $f$ is the polynomial $(20 X_0^2 + (47\times 13) X_1^2 + (103 \times 9) X_2^2)$.  It is easy to check that the invariant map of $\A$ is zero at all places apart from $17$.  At $17$, the special fibre $\Xs$ of the given model is the cone over a plane quartic curve; we find that $\partial \A \in \H^1(\Xsns,\Z/2\Z)$ is non-constant.  Let $\tilde{f}$ denote the reduction of $f$ modulo $17$; the variety defined over $(\Xs \setminus \{ \tilde{f}=0 \})$ by $T^2 = \tilde{f}$ extends to a torsor $Y \to \Xs$ representing the class $\partial \A$.  One can verify that $Y(\F_{17})$ is empty, showing that the invariant of $\A$ is never zero on $X(\Q_{17})$; so the invariant map is constant with value $\tfrac{1}{2}$.  Indeed, all twelve points of $\Xs(\F_{17})$ lift to points on the quadratic twist of $Y$.  However, the Hasse--Weil bounds show that $Y$ does admit points over every non-trivial extension of $\F_{17}$.  It follows that $X$ is a variety with no $\Q$-rational points, but no Brauer--Manin obstruction to the existence of a rational zero-cycle of degree one.  Unfortunately, I have been unable to find a rational zero-cycle of degree one on $X$.
\end{remark}

As a final corollary to Theorem~\ref{thm:weil}, we give an alternative proof of the following result of Colliot-Th\'el\`ene and Saito~\cite{CTS:IMRN-1996}.
They used the Chebotarev density theorem in their proof; in ours, this is replaced by the appeal to the Weil conjectures in the proof of Theorem~\ref{thm:weil}.

\begin{corollary}
Let $\A$ be an element of $\Br X\nonp$.  The following statements are equivalent:
\begin{enumerate}[(i)]
\item\label{kerti} $\A \in (\Br \X + \Br k) \subset \Br X$;
\item\label{constZ} There exists $\alpha \in \Br k$ such that $(\A-\alpha)(z)=0$ for all $z \in Z_0(X,\X)$;
\item\label{zeroZ0} $\A(z)=0$ for all $z \in Z_0^0(X,\X)$;
\item\label{constZ1} $\A(z)$ is constant for $z \in Z_0(X,\X)$ of degree $1$.
\end{enumerate}
In particular, the left kernel of the evaluation pairing 
\[
(\Br X / \Br k) \times Z_0^0(X,\X) \to \Br k
\]
consists of the image of $\Br \X$ in $\Br
X/\Br k$.
\end{corollary}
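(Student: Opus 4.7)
The plan is to deduce everything from Corollary~\ref{cor:z0} and Proposition~\ref{prop:kert}: the former identifies the image of the evaluation map on $Z_0^0(X,\X)$ with $\Br k[n]$, where $n$ is the order of $\t(\A)$, while the latter identifies $\ker\t$ with $\Br\X\nonp + \Br k\nonp$. The only substantive implication is (iii)$\Rightarrow$(i); the others are formal.

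For (i)$\Rightarrow$(iii), write $\A = \A_0 + \alpha$ with $\A_0 \in \Br\X$ and $\alpha \in \Br k$. Any closed point $P$ appearing in some $z \in Z_0(X,\X)$ extends to a section $\tilde P \colon \Spec \O_{k(P)} \to \X$, so $\tilde P^*\A_0$ lies in $\Br\O_{k(P)} = 0$; hence $\A_0(P) = 0$, and therefore $\A_0(z) = 0$ for every $z \in Z_0(X,\X)$. Since $\alpha(z) = \deg(z)\,\alpha$, the class $\alpha$ vanishes on $Z_0^0(X,\X)$, giving (iii). The equivalences (ii)$\Leftrightarrow$(iii)$\Leftrightarrow$(iv) then follow from the same identity $\alpha(z) = \deg(z)\,\alpha$ together with the existence of a degree-one cycle $z_0 \in Z_0(X,\X)$ (available because $\X(\O) \neq \emptyset$): starting from (iv), set $\alpha = \A(z_0)$ and decompose any degree-$d$ cycle as $(z - dz_0) + dz_0$ with $z - dz_0 \in Z_0^0(X,\X)$ to conclude (ii) and hence (iii); the reverse directions are immediate on restricting to degree-zero or degree-one cycles.

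The heart of the argument is (iii)$\Rightarrow$(i), and this is the step where we expect to need real input rather than bookkeeping. Let $n$ be the order of $\t(\A)$ in $\H^1(\Xsbns, \Q/\Z)$. Corollary~\ref{cor:z0} asserts that the image of $\A \colon Z_0^0(X,\X) \to \Br k$ is precisely $\Br k[n]$. Under hypothesis (iii) this image is $\{0\}$, forcing $\Br k[n] = 0$. Since local class field theory identifies $\Br k \isom \Q/\Z$, which has nontrivial $m$-torsion for every $m > 1$, we must have $n = 1$, i.e., $\t(\A) = 0$. Proposition~\ref{prop:kert} then places $\A$ in $\Br\X\nonp + \Br k\nonp$, proving (i).

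The concluding ``in particular'' statement is then immediate from the equivalence (iii)$\Leftrightarrow$(i): the left kernel of the pairing $(\Br X / \Br k) \times Z_0^0(X,\X) \to \Br k$ consists exactly of the classes modulo $\Br k$ of those $\A$ satisfying (iii), which by the above coincides with the image of $\Br\X$ in $\Br X / \Br k$.
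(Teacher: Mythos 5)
Your proposal is correct and follows essentially the same route as the paper: the formal implications are handled by the same bookkeeping, and the substantive step (iii)$\Rightarrow$(i) rests, exactly as in the paper, on combining Corollary~\ref{cor:z0} (the image of the evaluation map on $Z_0^0(X,\X)$ is $\Br k[n]$) with Proposition~\ref{prop:kert} ($\ker\t = \Br\X\nonp \oplus \Br k\nonp$); you argue directly where the paper argues by contraposition, which is an immaterial difference.
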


\begin{proof}
The implication (\ref{kerti}) $\Rightarrow$ (\ref{constZ}) follows immediately from the fact that, if $\A$ lies in $\Br \X$, then we have $\A(z)=0$ for all $z \in Z_0(X,\X)$.  The implications (\ref{constZ}) $\Rightarrow$ (\ref{zeroZ0}) is trivial.  Given that, by assumption, $\X(\O)$ is non-empty, the equivalence of (\ref{zeroZ0}) and (\ref{constZ1}) is also straightforward.

It remains to prove that (\ref{zeroZ0}) implies (\ref{kerti}).  Suppose
that $\A \in \Br X$ does not satisfy~(\ref{kerti}); by
Proposition~\ref{prop:kert}, we have $\t(\A) \neq 0$.
Then Corollary~\ref{cor:z0} shows the $\A(z)$ takes several different values for $z \in Z_0^0(X,\X)$,
contradicting statement~(\ref{zeroZ0}).
\end{proof}

\subsection{Generalisation to several algebras}\label{sec:surj2}

A natural generalisation of Theorem~\ref{thm:weil} to several elements of $\Br X$ would say the following: given a collection of ``independent'' elements of $\Br X$, they should ``independently'' take all possible values when evaluated at points of $X(k)$, assuming that the residue field is sufficiently large.  In this section we make this statement precise and prove it.

\begin{definition}\label{def:li}
Let $A$ be a torsion Abelian group, and let $a_1, \dotsc, a_r$ be elements of $A$ with orders $n_1, \dotsc, n_r$ respectively.  We will say that the $a_i$ are \emph{linearly independent} if any of the following, clearly equivalent, conditions hold:
\begin{enumerate}[(i)]
\item The size of the subgroup generated by the $a_i$ is $\prod_i n_i$;
\item The homomorphism $\prod_i (\Z/n_i) \to A$, defined on the $i$th factor by $1 \mapsto a_i$, is injective;
\item Whenever $\sum_i \lambda_i a_i=0$ holds with $\lambda_i \in \Z$, then we have $n_i \mid \lambda_i$ for all $i$.
\end{enumerate}
\end{definition}

\begin{lemma}\label{lem:indep}
Let $G$ be a group, and let $\chi_1, \dotsc, \chi_r \in \Hom(G, \Q/\Z)$ be characters of $G$ with orders $n_1, \dotsc, n_r$ respectively.  Then the $\chi_i$ are linearly independent if and only if the product homomorphism
\[
\prod_i \chi_i \colon G \to \prod_i \big( \tfrac{1}{n_i}\Z \big) / \Z
\]
is surjective.
\end{lemma}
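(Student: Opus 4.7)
The plan is to argue by Pontryagin duality applied to finite (or finitely generated torsion) abelian groups. Write $A = \prod_i \bigl(\tfrac{1}{n_i}\Z\bigr)/\Z$ and let $H$ denote the image of the homomorphism $\chi := \prod_i \chi_i \colon G \to A$. Surjectivity of $\chi$ is by definition the statement that $H = A$.

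Next I would compute the character group of $A$ together with the pullback of characters via $\chi$. Since the Pontryagin dual of $\tfrac{1}{n}\Z/\Z$ is canonically $\Z/n$ via the pairing $(\lambda, a) \mapsto \lambda a$, we obtain $\Hom(A, \Q/\Z) \cong \prod_i \Z/n_i$. An element $\lambda = (\lambda_1, \dotsc, \lambda_r)$ of this dual group, composed with $\chi$, gives the character $g \mapsto \sum_i \lambda_i \chi_i(g)$ of $G$, i.e.\ the character $\sum_i \lambda_i \chi_i$. Consequently, the annihilator
\[
H^\perp = \bigl\{\, \lambda \in \Hom(A, \Q/\Z) \;\big|\; \lambda|_H = 0 \,\bigr\}
\]
consists precisely of those $(\lambda_i) \in \prod_i \Z/n_i$ for which $\sum_i \lambda_i \chi_i = 0$ in $\Hom(G, \Q/\Z)$.

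The final step is to invoke the two standard observations. First, for a subgroup $H$ of a finite abelian group $A$, the equality $H = A$ is equivalent to $H^\perp = 0$. Second, by condition~(iii) of Definition~\ref{def:li}, the vanishing of $H^\perp$ in $\prod_i \Z/n_i$ says exactly that a relation $\sum_i \lambda_i \chi_i = 0$ (with $\lambda_i \in \Z$) forces $n_i \mid \lambda_i$ for all $i$, which is the linear independence of the $\chi_i$. Combining these gives the biconditional.

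The main (and essentially only) subtlety is ensuring that the various identifications are compatible: that the pullback of a character $\lambda$ of $A$ under $\chi$ really is $\sum_i \lambda_i \chi_i$, so that vanishing on $H$ corresponds bit-for-bit to the algebraic relation in condition~(iii). Once that bookkeeping is done, duality for finite abelian groups finishes the argument with no further computation. (If one prefers to avoid $A$ being non-finite in case some $\chi_i$ has infinite order, note that the hypothesis that each $\chi_i$ has finite order $n_i$ makes $A$ finite; alternatively one can replace $G$ by $G/\bigcap_i \ker \chi_i$, which is finite, without altering either side of the equivalence.)
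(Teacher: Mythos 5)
Your proof is correct and follows essentially the same route as the paper: both reduce the statement to Pontryagin duality for finite abelian groups. The paper phrases the duality via condition (ii) of Definition~\ref{def:li} (the homomorphism $\prod_i \Z/n_i \to \Hom(G,\Q/\Z)$ sending the $i$th generator to $\chi_i$ is injective iff its Pontryagin dual $\prod_i\chi_i$ is surjective), whereas you phrase it via condition (iii) and the vanishing of the annihilator of the image; these are two formulations of the same duality fact.
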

\begin{proof}
Replacing $G$ by its image under $\prod_i \chi_i$, we may assume that $G$ is finite and Abelian.
Denote by $\phi$ the homomorphism $\prod_i (\Z/n_i) \to \Hom(G, \Q/\Z)$ defined on the $i$th factor by $1 \mapsto \chi_i$.  As remarked in Definition~\ref{def:li}, the $\chi_i$ are linearly independent if and only if $\phi$ is injective.  But the homomorphism $\prod_i \chi_i$ is the Pontryagin dual of $\phi$, so is surjective if and only if $\phi$ is injective.
\end{proof}

%\begin{lemma}\label{lem:product}
%Let $S$ be a scheme, let $G,H$ be Abelian group schemes over $S$, and let $Y_1 \to S$ and $Y_2 \to S$ be $S$-torsors under $G$ and $H$ respectively.  Then the fibre product $Y = Y_1 \times_S Y_2$, with the natural action of $G \times_S H$, is an $S$-torsor under $G \times_S H$; and the natural isomorphism
%\[
%\H^1(S,G) \times \H^1(S,H) \to \H^1(S, G\times_S H)
%\]
%takes the product of the classes of $Y_1$ and $Y_2$ to the class of $Y$.
%\end{lemma}
%\begin{proof}
%That $Y$ is a torsor under $G\times_S H$ may be checked after an fppf base extension; choosing such a base extension to trivialise both $Y_1$ and $Y_2$, it becomes clear. 
%The second assertion is easily verified by writing down explicit \v Cech cocycles representing
%the cohomology classes. 
%\end{proof}
%
\begin{theorem}\label{thm:indep}
Let $c_1, \dotsc, c_r$ be linearly independent classes in
$\H^1(\Xsbns, \Q/\Z)$ of orders $n_1, \dotsc, n_r$ respectively, all coprime to $p$.
Then there is some constant $B$, depending only on the $c_i$,
such that, whenever $|\F|>B$ and for all $r$-tuples of normalised
elements $\A_i, \dotsc, \A_r \in \Br X$ with $\t\A_i=c_i$, the
product of the evaluation maps
\[
\prod \A_i \colon \X(\O) \to \prod_i \Br k[n_i]
\]
is surjective.
\end{theorem}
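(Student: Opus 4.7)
The plan is to mirror the proof of Theorem~\ref{thm:weil}, replacing the single torsor $Y \to \Xsns$ of class $\partial\A$ with the combined torsor on $\Xsns$ under the finite abelian group $G = \prod_i \Z/n_i$ whose class is $(\partial\A_1, \dots, \partial\A_r) \in \prod_i \H^1(\Xsns, \Z/n_i) \isom \H^1(\Xsns, G)$. Geometric irreducibility of this combined torsor, via Fact~\ref{weil}, will then produce a single point of $\Xsns(\F)$ on which the prescribed values are realised simultaneously.

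First, by Lemma~\ref{lem:norm}, each $\partial\A_i$ is of exact order $n_i$ in $\H^1(\Xsns, \Q/\Z)$, so we may view $\partial\A_i$ as a class in $\H^1(\Xsns, \Z/n_i)$ and pick a representing torsor $Y_i \to \Xsns$. Form the fibre product $Y = Y_1 \times_{\Xsns} \cdots \times_{\Xsns} Y_r$, a $G$-torsor of class $(\partial\A_1, \dots, \partial\A_r)$. Given $(\alpha_1, \dots, \alpha_r) \in \prod_i \Br k[n_i]$ with residues $\partial\alpha_i \in \H^1(\F, \Z/n_i)$, let $Y'$ denote the twist of $Y$ by the corresponding class in $\H^1(\F, G)$. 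Applying Lemma~\ref{lem:torsors} in each coordinate, a point $P_0 \in \Xsns(\F)$ lies in the image of $Y'(\F) \to \Xsns(\F)$ if and only if every lift $P \in \X(\O)$ of $P_0$ satisfies $\A_i(P) = -\alpha_i$ for all $i$. It therefore suffices to produce such a $P_0$ for every choice of $(\alpha_1, \dots, \alpha_r)$.

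The base change $\bar{Y}$ to $\Xsbns$ is étale over a smooth $\Fb$-variety, hence smooth, so geometric irreducibility of $\bar{Y}$ reduces to connectedness. The class of $\bar{Y}$ in $\H^1(\Xsbns, G) \isom \Hom(\pi_1(\Xsbns), G)$ is the product homomorphism $(c_1, \dots, c_r)$; since the $c_i$ are linearly independent, Lemma~\ref{lem:indep} shows this homomorphism is surjective, and Lemma~\ref{lem:conn} (whose proof goes through for an arbitrary finite abelian group $G$, since the Galois argument only uses transitivity of the action on geometric points) then gives that $\bar{Y}$ is connected.

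Taking $B = B(\bar{Y})$, a bound depending only on cohomological invariants of $\bar{Y}$ and hence only on $(c_1, \dots, c_r)$, every twist $Y'$ is geometrically isomorphic to $Y$ and therefore has the same $\bar{Y}$. Fact~\ref{weil} then guarantees $Y'(\F) \neq \emptyset$ whenever $|\F| > B$; any such point maps to $P_0 \in \Xsns(\F)$, which by regularity of $\X$ lifts via Hensel's lemma to an $\O$-point $P$ of $\X$, and the displayed values $(\A_1(P), \dots, \A_r(P)) = (-\alpha_1, \dots, -\alpha_r)$ achieve the required surjectivity as $(\alpha_1, \dots, \alpha_r)$ varies. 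The essential new content of the generalisation—and what I expect to be the main conceptual step—is the passage from linear independence of the $c_i$ to connectedness of the combined torsor $\bar{Y}$, which is exactly the reformulation provided by Lemmas~\ref{lem:indep} and~\ref{lem:conn}.
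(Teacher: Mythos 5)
Your proof is correct and is essentially identical to the paper's: both form the fibre product $Y = Y_1 \times_{\Xsns} \dotsb \times_{\Xsns} Y_r$, deduce connectedness of $\bar{Y}$ from linear independence via Lemmas~\ref{lem:indep} and~\ref{lem:conn}, and apply Fact~\ref{weil} to every Galois twist, reading off the values through Lemma~\ref{lem:torsors} coordinate by coordinate. The only cosmetic difference is that you phrase the twist as a single twist by a class in $\H^1(\F, \prod_i \Z/n_i)$ rather than as a product of coordinatewise twists, which amounts to the same thing.
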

\begin{proof}
Since the $\A_i$ are normalised, Lemma~\ref{lem:norm} shows that each
$\partial \A_i$ is of order $n_i$ in $\H^1(\Xsns, \Q/\Z)$.
For each $i = 1, \dotsc, r$, let $Y_i \to \Xsns$ be a torsor under
$\Z/n_i$ representing the class of $\partial(\A_i)$ in $\H^1(\Xsns,
\Z/n_i)$.  Let $Y$ be the fibre product
\[
Y = Y_1 \times_{\Xsns} \dotsb \times_{\Xsns} Y_r.
\]
Let $\bar{Y}$ be the base change of $Y$ to $\Fb$.  
%By Lemma~\ref{lem:product}, 
Then $\bar{Y}$ represents the class $\prod_i
c_i$ in $\H^1(\Xsbns, \prod_i \Z/n_i)$.  Now, identifying that
cohomology group with $\Hom(\pi_1(\Xsbns), \prod_i \Z/n_i)$,
Lemma~\ref{lem:indep} shows that the corresponding homomorphism is
surjective; by Lemma~\ref{lem:conn}, $\bar{Y}$ is connected.  Then
Fact~\ref{weil} shows that whenever $|\F| > B(\bar{Y})$, any variety
over $\F$ geometrically isomorphic to $\bar{Y}$ admits an $\F$-point.

Now let $(\alpha_1, \dotsc, \alpha_r)$ be an element of $\prod_i \Br
k[n_i]$; applying the residue map gives $(\beta_1, \dotsc, \beta_r)
\in \prod_i \H^1(\F, \Z/n_i)$.  Thinking of $\beta = \prod_i \beta_i$
as a class in $\H^1(\F, \prod_i \Z/n_i)$, we have the Galois twist
\[
Y^\beta = Y_1^{\beta_1} \times_{\Xsns} \dotsb \times_{\Xsns} Y_r^{\beta_r}.
\]
So $Y^\beta(\F) \neq \emptyset$ if and only if $Y_i^{\beta_i}(\F) \neq
\emptyset$ for all $i$, which by Lemma~\ref{lem:torsors} happens if
and only if there is a point $P \in \X(\O)$ satisfying
$\A_i(P)=-\alpha_i$ for all $i$.  Because $Y^\beta$ is geometrically
isomorphic to $\bar{Y}$, this will happen if $|\F|>B$ irrespective of
the choice of $(\alpha_1, \dotsc, \alpha_r)$.
\end{proof}

\section{Application: reduction to a cone}\label{sec:cone}

In this section, we look at the situation where $\Xs$ is the
projective cone over a smooth variety.  The purpose is to demonstrate
how the condition $\Br \Xsb=0$ allows us to deduce surjectivity of the
evaluation maps for any linearly independent set of elements of $\Br X
/ \Br k$; under additional hypotheses, we can go further and prove
vanishing of the Brauer--Manin obstruction over a global field.  Let
us first gather some geometric information about cones.

\begin{proposition}\label{prop:cone}
Let $K$ be an algebraically closed field, and let $Y$ be the projective cone in $\mathbb{P}_K^d$ over a smooth subvariety $Z$ of $\mathbb{P}_K^{d-1}$.  Suppose that $Y$ is normal.  Then:
\begin{enumerate}[(i)]
\item\label{conepic} the Picard group of $Y$ is isomorphic to $\Z$, and is generated by the class of a hyperplane section;
\item\label{conebr} the prime-to-$p$ torsion in the Brauer group of $Y$ is trivial, where $p$ is the characteristic of $K$.
\end{enumerate}
\end{proposition}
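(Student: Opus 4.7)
The plan is to exploit the resolution of singularities $\pi \colon \Yt \to Y$ obtained by blowing up the vertex $v$. Because $Z$ is smooth and $Y$ is the projective cone over $Z$, the exceptional divisor $E \subset \Yt$ is a copy of $Z$, and $\Yt$ is a $\P^1$-bundle over $Z$ in which $E$ sits as a section $s \colon Z \to \Yt$ of the bundle projection $\pi_Z \colon \Yt \to Z$. This single geometric picture will drive both parts.

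For part~(\ref{conepic}), normality of $Y$ gives $\pi_* \O_{\Yt} = \O_Y$, and the standard contraction criterion (rigidity of line bundles along a contracted fibre) identifies $\pi^*$ as an injection $\Pic Y \hookrightarrow \Pic \Yt$ whose image is the subgroup of classes whose restriction to $E$ is trivial. The projective bundle formula splits $\Pic \Yt = \pi_Z^* \Pic Z \oplus \Z \cdot [\O_{\Yt}(1)]$, and since $s$ is a section the restriction $s^*$ acts as the identity on $\pi_Z^* \Pic Z$ and sends the tautological class to $s^* \O_{\Yt}(1) \in \Pic Z$. The kernel of $s^*$ is therefore free of rank one, and unwinding the construction shows that it is generated by $\pi^* \O_Y(1)$, giving $\Pic Y \cong \Z$ with the hyperplane class as generator.

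For part~(\ref{conebr}), I would feed the same resolution into the Leray spectral sequence with coefficients in $\mmu_n$ for $n$ coprime to $p$. Since $\pi$ is proper and an isomorphism over $Y \setminus \{v\}$, proper base change identifies $\R^q \pi_* \mmu_n$ for $q \ge 1$ with the skyscraper sheaf at $v$ whose stalk is $\H^q(Z, \mmu_n)$. All terms of the $E_2$-page away from the two axes vanish, and tracing through the differentials yields a long exact sequence
\begin{multline*}
\dotsb \to \H^{q-1}(Z, \mmu_n) \to \H^q(Y, \mmu_n) \to \H^q(\Yt, \mmu_n) \xrightarrow{s^*} \H^q(Z, \mmu_n) \\
\to \H^{q+1}(Y, \mmu_n) \to \dotsb ,
\end{multline*}
in which the edge map $s^*$ is restriction to $E$. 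Because $s$ is a section of $\pi_Z$, the identity $s^* \circ \pi_Z^* = \mathrm{id}$ shows that $s^*$ is split surjective, so the sequence breaks into short exact pieces
\[
0 \to \H^q(Y, \mmu_n) \to \H^q(\Yt, \mmu_n) \to \H^q(Z, \mmu_n) \to 0.
\]

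In degree two the projective bundle formula gives $\H^2(\Yt, \mmu_n) \cong \H^2(Z, \mmu_n) \oplus \Z/n$, so the short exact sequence above forces $\H^2(Y, \mmu_n) \cong \Z/n$. Combined with $\Pic Y / n \cong \Z/n$ from part~(\ref{conepic}), the Kummer sequence
\[
0 \to \Pic Y / n \to \H^2(Y, \mmu_n) \to \Br Y[n] \to 0
\]
forces the injection on the left to be an isomorphism between finite groups of the same order, and hence $\Br Y[n] = 0$; taking the direct limit over $n$ coprime to $p$ gives the claim. The main technical point is confirming that the generator of $\ker s^*$ on both the Picard and the cohomological side is really $\pi^* \O_Y(1)$, which reduces to an explicit computation of $s^* \O_{\Yt}(1)$ as a multiple of $\O_Z(1)$; once that bookkeeping is in place, the rest is formal.
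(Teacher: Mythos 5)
Your argument is correct in substance but takes a genuinely different route from the paper's. The paper never touches the resolution: it quotes Hoobler's theorem on graded rings, which for the normal affine coordinate ring $S$ of the affine cone $U$ gives $\Pic S = \Pic S_0$ and $\Br S\nonp = \Br S_0\nonp$ with $S_0=K$, so that $\Pic U$ and $\Br U\nonp$ vanish outright; part~(i) then follows because $U$ is the complement of a hyperplane section, and part~(ii) from a Mayer--Vietoris sequence for the cover of $Y$ by $U$ and the smooth locus $V$. Your route --- blow up the vertex, use the projective bundle structure of $\Yt \cong \P(\O_Z\oplus\O_Z(-1))$, and run the Leray spectral sequence for $\pi$, whose higher direct images are skyscrapers at the vertex with stalks $\H^q(Z,\mmu_n)$ by proper base change --- is more computational but self-contained modulo standard facts about $\P^1$-bundles, and the final count $\H^2(Y,\mmu_n)\cong\Z/n=\Pic Y/n$ killing $\Br Y[n]$ via Kummer is a clean way to conclude. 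One caveat on part~(i): the ``standard contraction criterion'' you invoke --- that a line bundle on $\Yt$ descends to $Y$ as soon as its restriction to $E$ is trivial --- is not true in this generality; descent requires triviality on the formal fibre of $\pi$ at the vertex, and the obstructions to propagating a trivialisation from $E$ to its infinitesimal neighbourhoods lie in $\H^1(Z,\mathcal{I}^m/\mathcal{I}^{m+1})=\H^1(Z,\O_Z(m))$, which need not vanish. Fortunately you do not need surjectivity of $\pi^*$ onto $\ker s^*$: the easy inclusion $\pi^*\Pic Y\subseteq\ker s^*$, the injectivity of $\pi^*$ (from $\pi_*\O_{\Yt}=\O_Y$, using normality), and the deferred bookkeeping --- namely that $\pi^*\O_Y(1)=\pi_Z^*\O_Z(1)\otimes\O_{\Yt}(E)$ generates $\ker s^*$, which follows from $s^*\O_{\Yt}(E)=N_{E/\Yt}=\O_Z(-1)$ --- already sandwich $\Pic Y$ between $\Z\cdot[\O_Y(1)]$ and the infinite cyclic group $\ker s^*$ generated by $\pi^*[\O_Y(1)]$, forcing equality. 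With that adjustment the proof is complete.
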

\begin{proof}
Let $U \subset Y$ be the affine cone over $Z$.
Hoobler~\cite{Hoobler:MRT-84} studied graded rings $S$ and gave a list of functors $F$ which satisfy $F(S) = F(S_0)$, where $S_0$ is the degree-zero part of $S$.  In particular, he showed that $\Pic S  = \Pic S_0$ when $S$ is normal, and that $\Br S \nonp = \Br S_0 \nonp$.  We apply this with $S$ being the affine coordinate ring of $U$ and $S_0=K$ to show that $\Pic U$ and $\Br U \nonp$ are both trivial.  Because $U$ is the complement of a hyperplane section in $Y$, we obtain~(\ref{conepic}).

To complete the calculation of $\Br Y$, consider the open set $V \subset Y$ obtained by removing the vertex.  The Mayer--Vietoris sequence (see~\cite[III.2.24]{Milne:EC}) for the covering $Y = U \cup V$ contains the exact sequence
\[
\Pic U \oplus \Pic V \to \Pic(U \cap V) \to \Br Y \to \Br U \oplus \Br V \to \Br(U \cap V).
\]
Because $V$ is smooth, $\Pic V$ surjects onto $\Pic(U \cap V)$ and $\Br V$ injects into $\Br(U \cap V)$.  We deduce $\Br Y \nonp=0$, proving~(\ref{conebr}).
\end{proof}

\begin{remark}
In the case of a cone over a smooth projective \emph{curve} in characteristic $p$, Gordon~\cite{Gordon:JA-82} showed that $\Br Y$ is isomorphic to the set of $K$-points of a unipotent algebraic group.  In particular, $\Br Y$ is $p$-torsion.
\end{remark}

We obtain the following consequence.

\begin{lemma}\label{lem:cone}
Let $\X$ be a regular scheme, proper and flat over $\O$, such that the
generic fibre $X$ is a smooth variety over $k$ and the special fibre
$\Xs$ is the projective cone over a smooth projective variety.  Then
the kernel of the map $\bar\partial \colon \Br X \nonp \to
\H^1(\Xsbns, \Q/\Z)$ is $\Br k \nonp$.
\end{lemma}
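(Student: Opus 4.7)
By Proposition~\ref{prop:kert}, $\ker \bar\partial = \Br \X \nonp \oplus \Br k \nonp$ inside $\Br X \nonp$, so the lemma is equivalent to the vanishing $\Br \X \nonp = 0$. My plan is to identify $\Br \X \nonp$ with $\Br \Xs \nonp$ via (an adaptation of) Proposition~\ref{prop:proper}, and then to show $\Br \Xs \nonp = 0$ using the cone structure of the special fibre.

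For the identification $\Br \X \nonp \isom \Br \Xs \nonp$, I would mimic the snake-lemma argument in the proof of Proposition~\ref{prop:proper} separately at each $n$ coprime to $p$; that argument in fact only requires the restriction map $\Pic \X / n \to \Pic \Xs / n$ to be surjective (not the full surjectivity of $\Pic \X \to \Pic \Xs$ assumed in the statement). I expect this modular surjectivity to follow from deformation theory: the obstruction to lifting a class in $\Pic \Xs$ through the successive infinitesimal thickenings $\X_m = \X \times_\O \O/\pi^m$ lies in $\H^2(\Xs, \O_{\Xs})$, which is an $\F$-vector space and therefore $p$-power torsion as an abelian group. Since $\O$ is a complete DVR and $\X$ is proper over $\O$, Grothendieck's existence theorem algebraises the resulting formal lift of some $p^r$-multiple of any given class to an honest element of $\Pic \X$, so the cokernel of $\Pic \X \to \Pic \Xs$ is killed by a power of $p$. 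In particular, $\Pic \X / n \to \Pic \Xs / n$ is surjective whenever $\gcd(n,p)=1$, yielding $\Br \X [n] \isom \Br \Xs [n]$ for each such $n$ and hence, in the limit, $\Br \X \nonp \isom \Br \Xs \nonp$. Establishing this Pic surjectivity via deformation theory is the main obstacle.

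To finish, I need $\Br \Xs \nonp = 0$. The Hochschild--Serre spectral sequence for the Galois cover $\Xsb \to \Xs$, with group $\Gamma = \Gal(\Fb/\F) \isom \hat{\Z}$, produces the five-term exact sequence
\[
\H^2(\Gamma, \Fb^\times) \to \Br \Xs \to (\Br \Xsb)^\Gamma.
\]
The leftmost group is $\Br \F = 0$ since $\F$ is finite, so $\Br \Xs$ injects into $(\Br \Xsb)^\Gamma$. By Proposition~\ref{prop:cone}, $\Br \Xsb \nonp = 0$, hence $\Br \Xs \nonp = 0$; combined with the identification of the previous paragraph this gives $\Br \X \nonp = 0$, and the lemma follows.
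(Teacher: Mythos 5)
Your overall architecture is the same as the paper's: reduce the lemma to the vanishing of $\Br\X\nonp$ (the paper does this by a diagram chase with Lemma~\ref{lem:nr} rather than by quoting Proposition~\ref{prop:kert}, but it is the same reduction), kill $\Br\Xs\nonp$ by Hochschild--Serre together with Proposition~\ref{prop:cone}, and transfer the vanishing to $\Br\X\nonp$ via Proposition~\ref{prop:proper}. One small inaccuracy in your last step: the three-term sequence $\H^2(\Gamma,\Fb^\times)\to\Br\Xs\to(\Br\Xsb)^\Gamma$ is not exact in the middle. What the spectral sequence gives is that $\ker(\Br\Xs\to\Br\Xsb)$ is caught between $\H^2(\Gamma,\Fb^\times)=\Br\F$ and $E_2^{1,1}=\H^1(\F,\Pic\Xsb)$, so you must also check that the latter vanishes; it does, because $\Pic\Xsb\isom\Z$ with trivial Galois action by Proposition~\ref{prop:cone}(i) and $\H^1(\hat{\Z},\Z)=\Hom_{\mathrm{cont}}(\hat{\Z},\Z)=0$. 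The paper's proof includes exactly this step.

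The genuine gap is in your deformation-theoretic argument that $\coker(\Pic\X\to\Pic\Xs)$ is killed by a power of $p$. The obstruction to lifting a line bundle $L$ from $\X_m$ to $\X_{m+1}$ lies in $\H^2(\Xs,\O_{\Xs})\otimes\mathfrak{m}^m/\mathfrak{m}^{m+1}$, which is indeed killed by $p$; but this only tells you that $L^{\otimes p}$ lifts \emph{one more infinitesimal step}. The obstruction at the next step is attached to the chosen lift, and killing it costs another factor of $p$. Iterating, you obtain a lift of $L^{\otimes p^m}$ to $\X_m$ with the exponent growing with $m$, and these lifts are not compatible under restriction; no \emph{fixed} power $L^{\otimes p^r}$ is shown to lift to a compatible formal system, so Grothendieck existence has nothing to algebraise. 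Indeed the conclusion you are aiming for is false for general proper regular $\X/\O$: if $E$, $E'$ are non-isogenous elliptic curves over $k$ with good reduction whose reductions are isogenous, then $\coker(\Pic(\mathcal{E}\times_\O\mathcal{E}')\to\Pic(E_0\times E'_0))$ has positive rank. (Supersingular reductions of K3 surfaces give similar examples.) So the prime-to-$p$ surjectivity of $\Pic\X/n\to\Pic\Xs/n$ must come from the specific geometry: since $\Pic\Xs$ is generated by the hyperplane class $H$, what is needed is that some prime-to-$p$ multiple of $H$ lifts to $\Pic\X$. This is clear when the projective embedding of $\Xs$ extends to (a projective embedding of) $\X$, as it does in the paper's applications, but it is precisely the surjectivity hypothesis of Proposition~\ref{prop:proper}, which the paper simply invokes; your instinct that this is the crux is right, but the proposed proof of it does not work.
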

\begin{proof}
We first show that $\Br \Xs \nonp$ is trivial, as follows.  The
Hochschild--Serre spectral sequence for $\Xsb \to \Xs$ gives an exact
sequence
\[
\Br \F \to \ker( \Br\Xs \to \Br\Xsb ) \to \H^1(\F, \Pic \Xsb).
\]
Since $\F$ is finite, $\Br\F$ is trivial; and, since $\Pic\Xsb$ is
isomorphic to $\Z$ with trivial Galois action, $\H^1(\F, \Pic\Xsb)$ is trivial. Therefore
$\Br\Xs$ injects into $\Br\Xsb$, and so $\Br\Xs \nonp$ is
trivial. The result now follows from Proposition~\ref{prop:proper} and a diagram-chase using Lemma~\ref{lem:nr}.
\end{proof}

\begin{corollary}
Under the conditions of Lemma~\ref{lem:cone}, suppose that $\A_1,
\dotsc, \A_r \in \Br X \nonp$ are linearly independent when considered
as elements of $(\Br X / \Br k)$.  Then the conclusion of
Theorem~\ref{thm:indep} applies to $\A_1, \dotsc, \A_r$. 
%\qed
\end{corollary}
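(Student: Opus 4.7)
The plan is to reduce the corollary directly to Theorem~\ref{thm:indep} by producing linearly independent classes $c_1, \dotsc, c_r$ in $\H^1(\Xsbns, \Q/\Z)$ out of the given algebras, then invoking the theorem with those $c_i$. The first step is normalisation: since the standing assumption of Section~\ref{sec:prelim} furnishes some $P \in \X(\O)$, I would replace each $\A_i$ by $\A_i' = \A_i - \A_i(P) \in \Br X \nonp$. Each $\A_i'$ is normalised at $P$ and differs from $\A_i$ by a constant algebra, so the images in $\Br X / \Br k$ are unchanged and the $\A_i'$ remain linearly independent modulo $\Br k$.

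The key step is to transfer this linear independence across $\t$. Set $c_i = \t\A_i'$. Lemma~\ref{lem:cone} tells us that, in the cone setting, the kernel of $\t \colon \Br X \nonp \to \H^1(\Xsbns, \Q/\Z)$ is precisely $\Br k \nonp$, so $\t$ induces an injective homomorphism $\Br X \nonp / \Br k \nonp \hookrightarrow \H^1(\Xsbns, \Q/\Z)$. Injective homomorphisms preserve linear independence in the sense of Definition~\ref{def:li}, so $c_1, \dotsc, c_r$ are linearly independent and each $c_i$ has the same order $n_i$ as the image of $\A_i$ in $(\Br X / \Br k)$, which is coprime to $p$ by construction.

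To finish, I would apply Theorem~\ref{thm:indep} to the classes $c_1, \dotsc, c_r$ together with the normalised algebras $\A_1', \dotsc, \A_r'$; by construction they satisfy $\t \A_i' = c_i$, which is exactly the setup of the theorem. The theorem then produces a bound $B$ depending only on the $c_i$ such that whenever $|\F| > B$ the product map $\prod \A_i' \colon \X(\O) \to \prod_i \Br k[n_i]$ is surjective. Translating by the fixed constants $\A_i(P) \in \Br k$ transfers the same surjectivity statement to the original $(\A_i)$. I do not anticipate any real obstacle: the entire content of the corollary is the observation that, under the cone hypothesis, linear independence modulo $\Br k$ coincides with linear independence of the residues $\t\A_i$, and once this is in hand Theorem~\ref{thm:indep} yields the conclusion verbatim.
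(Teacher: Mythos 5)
Your argument is correct and is precisely the reasoning the paper leaves implicit (the corollary is stated without proof as an immediate consequence): Lemma~\ref{lem:cone} identifies $\ker\t$ with $\Br k\nonp$, so $\t$ injects $\Br X\nonp/\Br k\nonp$ into $\H^1(\Xsbns,\Q/\Z)$ and carries linear independence modulo $\Br k$ to linear independence of the classes $c_i=\t\A_i$, after which Theorem~\ref{thm:indep} applies to the normalised representatives. Your normalisation step and the translation back by the constants $\A_i(P)$ are exactly the intended bookkeeping.
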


In particular situations, we can make the bounds of
Theorem~\ref{thm:indep} explicit.  One such case is when $\Xs$ is the
projective cone over a smooth projective curve $C$ of genus $g$.

\begin{theorem}\label{thm:cone}
Let $\X$ be a regular scheme, proper and flat over $\O$, such that the
generic fibre $X$ is a smooth surface over $k$ and the special fibre
$\Xs$ is the projective cone over a smooth projective curve of genus
$g$.  Let $\A_1, \dotsc, \A_r$ be normalised elements of $\Br X$, of
orders coprime to $p$, the
images of which in $\Br X / \Br k$ are linearly independent of orders
$n_1, \dotsc, n_r$, and let $N=n_1 \dotsm n_r$.  Suppose that the
residue field $\F$ satisfies
\begin{equation}\label{eq:size}
|\F| > \big(g' + \sqrt{(g')^2-1}\big)^2, \quad \text{where} \quad
g' = N(g-1)+1.
\end{equation}
Then the product of the evaluation maps
\[
\prod \A_i \colon X(k) \to \prod_i \Br k[n_i]
\]
is surjective.
\end{theorem}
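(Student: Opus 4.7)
The plan is to apply Theorem~\ref{thm:indep} and to make the bound $B$ explicit using the $\mathbb{A}^1$-bundle structure of $\Xsbns$. First, properness of $\X$ over $\O$ gives $X(k)=\X(\O)$ by the valuative criterion, so the conclusion of Theorem~\ref{thm:indep} about $\X(\O)$ will yield the desired statement about $X(k)$. Lemma~\ref{lem:cone} shows $\ker\t=\Br k\nonp$, so $\t$ induces an injection $(\Br X/\Br k)\nonp\hookrightarrow\H^1(\Xsbns,\Q/\Z)$; hence the classes $c_i=\t\A_i$ are linearly independent of orders $n_i$ in $\H^1(\Xsbns,\Q/\Z)$, and the hypothesis of Theorem~\ref{thm:indep} is satisfied.

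Next I identify the test variety $\bar Y$ arising in the proof of Theorem~\ref{thm:indep}. Projection from the vertex exhibits $\Xsns$ as the total space of the line bundle $\O_C(1)$ on $C$, a Zariski-locally trivial $\mathbb{A}^1$-bundle $\pi\colon\Xsns\to C$. Since $\mathbb{A}^1$ has trivial prime-to-$p$ \'etale cohomology in positive degree, the Leray spectral sequence yields $\pi^*\colon\H^1(C,\Zn)\isom\H^1(\Xsns,\Zn)$ for every $n$ coprime to $p$, and likewise over $\Fb$. Therefore each geometric torsor $\bar Y_i\to\Xsbns$ is the pullback of a Galois \'etale cover $\bar Y_{i,C}\to\bar C$, and $\bar Y$ is the pullback of the fibre product $\bar Y_C:=\bar Y_{1,C}\times_{\bar C}\cdots\times_{\bar C}\bar Y_{r,C}$. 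By Lemma~\ref{lem:indep} and Lemma~\ref{lem:conn}, the linear independence of the $c_i$ implies $\bar Y_C$ is connected. Galois descent then shows that every twist $Y^\beta$ used in the proof of Theorem~\ref{thm:indep} has the form $Y_C^\beta\times_C\Xsns$, where $Y_C^\beta$ is a smooth projective curve over $\F$ geometrically isomorphic to $\bar Y_C$.

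Riemann--Hurwitz applied to the \'etale cover $\bar Y_C\to\bar C$ of degree $N$ gives $g(\bar Y_C)=N(g-1)+1=g'$. The Hasse--Weil bound then yields $|Y_C^\beta(\F)|\ge|\F|+1-2g'\sqrt{|\F|}$, which is strictly positive exactly when $|\F|$ satisfies~\eqref{eq:size}. Under that assumption, each twist $Y_C^\beta$ has an $\F$-point; and since $Y^\beta\to Y_C^\beta$ is a Zariski-locally trivial $\mathbb{A}^1$-bundle, every $\F$-point of $Y_C^\beta$ lifts to $Y^\beta(\F)$. Substituting this into the proof of Theorem~\ref{thm:indep} yields the desired surjectivity.

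The main obstacle is the functorial chain (homotopy invariance of $\H^1$ for $\mathbb{A}^1$-bundles, then Galois descent, then compatibility with the twisting construction) needed to identify the test variety concretely and reduce the problem to counting $\F$-points on a smooth projective curve of genus $g'$. Once that reduction is in place, Hasse--Weil does the rest.
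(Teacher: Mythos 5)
Your proof is correct and follows essentially the same route as the paper's: reduce via Theorem~\ref{thm:indep} to producing $\F$-points on the (twists of the) degree-$N$ torsor over $\Xsns$, identify that torsor as the pullback of a degree-$N$ \'etale cover of the curve $C$ along the $\mathbb{A}^1$-bundle projection, and apply Riemann--Hurwitz and Hasse--Weil. The additional details you supply (the valuative criterion giving $X(k)=\X(\O)$, homotopy invariance of $\H^1$ for the $\mathbb{A}^1$-bundle, and compatibility of twisting with pullback) are correct and simply make explicit what the paper leaves implicit.
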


\begin{proof}
As in the proof of Theorem~\ref{thm:indep}, it will be enough to show
that any \'etale cover of $\Xsns$ of degree $N$ admits an
$\F$-rational point.  Suppose that $\Xs$ is the projective cone over a
smooth projective curve $C$ of genus $g$.  After removing the vertex,
$\Xsns$ is a fibration over $C$, with fibres isomorphic to the affine
line, and every \'etale cover of $\Xsns$ arises by pulling back an
\'etale cover $D \to C$.  If $D \to C$ is of degree $N$, then the
Riemann--Hurwitz formula shows that the genus of $D$ is $g'$.  Now the
Hasse--Weil bounds show that, under the condition~\eqref{eq:size}, every
smooth projective curve of genus $g'$ over $\F$ admits an
$\F$-rational point, and hence every \'etale cover of $\Xsns$ of
degree $N$ admits an $\F$-rational point.
\end{proof}

\begin{remark}
If $g = 1$, then the condition~\eqref{eq:size} is vacuous.
\end{remark}

\section{Vanishing of Brauer--Manin obstructions}\label{sec:global}

An application of the surjectivity results of Sections~\ref{sec:surj} and~\ref{sec:surj2} is to show that certain varieties over global fields have no Brauer--Manin obstruction to the existence of a rational point.

Let us first make a useful definition.  Let $X$ be a variety over a number field $K$, and let $v$ be a place of $K$.  Suppose that $X(K_v)$ is non-empty, and let $P$ be a point of $X(K_v)$.  Using $P$, we can define a pairing
\[
\Br X / \Br K \times X(K_v) \to \Q/\Z
\]
by
\[
(\A, Q) \mapsto \inv_v \A(Q) - \inv_v \A(P).
\]
Indeed, for any $Q$, this defines a homomorphism $\Br X \to \Q/\Z$ which is trivial on $\Br K$, so factors through $\Br X / \Br K$.

\begin{definition}
Let $B$ be a subgroup of $\Br X / \Br K$.  We say that $B$ is \emph{prolific} at the place $v$ if the map 
\[
X(K_v) \to \Hom(B,\Q/\Z)
\]
induced by the above pairing is surjective.
\end{definition}

\begin{remark}
\begin{enumerate}
\item The definition does not depend on the choice of $P$, since changing $P$ simply translates the image by an element of $\Hom(B,\Q/\Z)$.
\item Suppose that $B$ is generated by a single algebra $\A$ of order $n$, which we may assume to be normalised.  Then $B$ is prolific if and only if the evaluation map $\A \colon X(K_v) \to \Br k[n]$ is surjective.
\item In order for $B$ to be prolific, the localisation map $B \to (\Br X_{K_v} / \Br K_v)$ must be injective.
\end{enumerate}
\end{remark}

The motivation for making this definition is the following easy proposition.

\begin{proposition}\label{prop:prolific}
Let $X$ be a variety over a number field $K$.  Let $B$ be a subgroup of $\Br X / \Br K$, and suppose that $B$ is prolific at a place $v$ of $K$.  Then there is no Brauer--Manin obstruction to the existence of rational points on $X$ coming from $B$.
\end{proposition}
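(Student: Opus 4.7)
The plan is to take an arbitrary adelic point, compute the total Brauer--Manin defect at all places other than $v$, and then use the prolific hypothesis to adjust the $v$-component so that the total sum is killed.

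More precisely, if $X(\ad_K)$ is empty then the claim is vacuous; otherwise fix some $(P_w) \in X(\ad_K)$ and fix a basepoint $P \in X(K_v)$ (which exists since prolificity presupposes $X(K_v) \neq \emptyset$). For each $\A \in \Br X$, I would consider the quantity
\[
f(\A) \;=\; -\sum_{w \neq v} \inv_w \A(P_w) \;+\; \inv_v \A(P) \;\in\; \Q/\Z.
\]
The first step is to check that $\A \mapsto f(\A)$ descends to a homomorphism $B \to \Q/\Z$: it is clearly additive in $\A$, and if $\A$ comes from $\Br K$ then the expression collapses, by global class field theory (the sum of local invariants of an element of $\Br K$ is zero), to the single term $\inv_v \A(P)$ minus $\sum_{w} \inv_w \A(P_w) + \inv_v \A(P_v) = -\inv_v \A(P_v) + \inv_v \A(P)$, which together with the reciprocity sum gives $0$. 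So $f$ defines an element of $\Hom(B, \Q/\Z)$.

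The second step is to invoke the prolific hypothesis: the map $X(K_v) \to \Hom(B, \Q/\Z)$ sending $Q \mapsto (\A \mapsto \inv_v \A(Q) - \inv_v \A(P))$ is surjective, so there exists $Q_v \in X(K_v)$ realising the homomorphism $f$. Unwinding definitions, this means
\[
\inv_v \A(Q_v) - \inv_v \A(P) \;=\; -\sum_{w \neq v} \inv_w \A(P_w) + \inv_v \A(P)
\]
for every $\A \in B$, which rearranges to $\sum_{w \neq v} \inv_w \A(P_w) + \inv_v \A(Q_v) = 0$. Thus the modified adelic point $(Q_w)$ with $Q_w = P_w$ for $w \neq v$ and with the chosen $Q_v$ lies in the right kernel of the Brauer--Manin pairing restricted to $B$, showing that $X(\ad_K)^B$ is non-empty.

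The argument is essentially just bookkeeping with local invariants; the only genuinely non-trivial ingredient is reciprocity, which ensures $f$ is well defined on $B \subseteq \Br X/\Br K$ and not merely on $\Br X$. There is no hard step — the work has already been done in setting up the notion of prolificity correctly.
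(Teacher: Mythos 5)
Your strategy is exactly the one the paper uses: start from an arbitrary adelic point, package the invariant sums into a homomorphism $B \to \Q/\Z$, and use prolificity at $v$ to correct the $v$-component. However, there is a sign error in your definition of $f$ that breaks the written argument in two places. With
\[
f(\A) \;=\; -\sum_{w \neq v} \inv_w \A(P_w) \;+\; \inv_v \A(P),
\]
take a constant class $\A \in \Br K$: reciprocity gives $\sum_{w} \inv_w \A(P_w) = 0$, so $-\sum_{w \neq v} \inv_w \A(P_w) = +\inv_v \A(P_v) = \inv_v \A$, and hence $f(\A) = 2\inv_v \A$, which is not zero in general. So $f$ does \emph{not} descend to $B \subseteq \Br X/\Br K$; in your verification the term $-\sum_{w\neq v}\inv_w\A(P_w)$ is replaced by $-\inv_v\A(P_v)$ when it should be $+\inv_v\A(P_v)$. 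The same sign reappears at the end: from your displayed identity $\inv_v\A(Q_v) - \inv_v\A(P) = -\sum_{w\neq v}\inv_w\A(P_w) + \inv_v\A(P)$ one gets $\sum_{w\neq v}\inv_w\A(P_w) + \inv_v\A(Q_v) = 2\inv_v\A(P)$, not $0$.

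The fix is immediate: define instead
\[
f(\A) \;=\; -\sum_{w \neq v} \inv_w \A(P_w) \;-\; \inv_v \A(P),
\]
which is unchanged when $\A$ is altered by a constant class (the two terms now shift by $+\inv_v\alpha$ and $-\inv_v\alpha$), so it genuinely lies in $\Hom(B,\Q/\Z)$; prolificity then produces $Q_v$ with $\inv_v\A(Q_v) = -\sum_{w\neq v}\inv_w\A(P_w)$, which is what you need. The paper avoids this bookkeeping altogether by taking the basepoint $P$ to be $P_v$ itself, so that the homomorphism to be realised is simply $\A \mapsto -\sum_{w}\inv_w\A(P_w)$. With the sign corrected your argument is the paper's argument.
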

\begin{proof}
Let $\Omega$ denote the set of places of $K$; to prove the proposition, we must find an adelic point
$(P_w \in X(K_w))_{w \in \Omega}$ satisfying $\sum_{w \in \Omega} \inv_w
\A(P_w)=0$ for all $\A \in B$.  Let $(P_w)_{w \in \Omega}$ be \emph{any}
adelic point of $X$.  Define a homomorphism $\phi \colon B \to \Q/\Z$ by
\[
\phi(\A) = \sum_{w \in \Omega} \inv_w \A(P_w).
\]
Since $B$ is prolific at $v$, there is a point $Q \in X(K_v)$ such that
\[
-\phi(\A) = \inv_v \A(Q) - \inv_v \A(P_v) \text{ for all } \A \in B.
\]
Replacing $P_v$ by $Q$ then gives an adelic point satisfying $\sum_{w \in \Omega} \inv_w \A(P_w)=0$ for all $\A \in B$, and so there is no Brauer--Manin obstruction to the existence of a rational point on $X$.
\end{proof}

Let us now use the calculations of Section~\ref{sec:cone} to show
vanishing of the Brauer--Manin obstruction on some varieties reducing
to a cone at a bad prime.

\begin{theorem}\label{thm:cone-global}
Let $X$ be a smooth, projective, geometrically integral surface over a
number field $K$, with points in every completion of $K$.  Let
$\bar{X}$ denote the base change of $X$ to an algebraic closure
$\bar{K}$ of $K$.  Suppose that $\H^1(X, \O_X)$ is trivial, that $\Pic
\bar{X}$ is torsion-free and that $\Br X / \Br K$ is finite of order
$N$.  Suppose that there is a prime $\p$ of $K$ such that
\begin{enumerate}[(i)]
\item $\p$ does not divide $N$;
\item $X$ extends to a regular scheme, projective and flat over the
  ring of integers at $\p$, the special fibre of which is the
  projective cone over a smooth projective curve of genus $g$; and
\item the residue field at $\p$ satisfies condition~\eqref{eq:size} of Theorem~\ref{thm:cone}.
\end{enumerate}
Then $\Br X / \Br K$ is prolific at $\p$, and so there is no Brauer--Manin obstruction to the existence of rational points on $X$.
\end{theorem}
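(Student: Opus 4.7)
The second conclusion is immediate from Proposition~\ref{prop:prolific} given the first, so it suffices to show that $B := \Br X / \Br K$ is prolific at $\p$.

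Let $\X$ denote the regular model over $\O_\p$ supplied by hypothesis~(ii); its generic fibre is $X_\p = X \times_K K_\p$. Since $X(K_\p)$ is non-empty and $\X$ is proper, the valuative criterion gives $\X(\O_\p) \ne \emptyset$; pick $P \in \X(\O_\p)$. Decompose the finite abelian group $B$ as $\bigoplus_{i=1}^r \Z/n_i$ with $\prod_i n_i = N$, and choose representatives $\bar{\A}_1, \dotsc, \bar{\A}_r \in B$ of orders $n_1, \dotsc, n_r$; since $\p \nmid N$, each $n_i$ is coprime to $p$. Lift each $\bar{\A}_i$ to an element $\A_i \in \Br X$, restrict to $\Br X_\p$, and replace $\A_i$ by $\A_i - \A_i(P)$ so that it is normalised.

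By Lemma~\ref{lem:cone} applied to $\X$, the boundary map $\bar\partial \colon \Br X_\p \nonp \to \H^1(\Xsbns, \Q/\Z)$ has kernel $\Br K_\p \nonp$ and so induces an injection $\Br X_\p \nonp / \Br K_\p \nonp \hookrightarrow \H^1(\Xsbns, \Q/\Z)$. Composing with restriction from $\Br X$ yields a map $\bar\partial \colon B \to \H^1(\Xsbns, \Q/\Z)$; the crux is to show that this map is injective, so that the classes $\bar\partial(\bar{\A}_i)$ are linearly independent in $\H^1(\Xsbns, \Q/\Z)$ of the same orders $n_i$. I expect to establish this by using the hypotheses $\H^1(X, \O_X) = 0$ and $\Pic \bar{X}$ torsion-free, which via the Hochschild--Serre spectral sequence embed $B$ into $\H^1(K, \Pic \bar{X})$ so that every class is algebraic and controlled by Galois-cohomological data over $K$; the cone structure at $\p$ then rules out a nontrivial class of $B$ becoming constant on $X_\p$. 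This injectivity (with preservation of orders) is the main obstacle in the proof.

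Once injectivity is in hand, the normalised classes $\A_i \in \Br X_\p$ are of orders coprime to $p$ with images in $\Br X_\p / \Br K_\p$ that are linearly independent of orders $n_1, \dotsc, n_r$ and product $N$. Hypothesis~(iii) is precisely the residue-field condition~\eqref{eq:size} of Theorem~\ref{thm:cone}, so that theorem applies and yields surjectivity of the product evaluation map
\[
\prod_{i=1}^r \A_i \colon X(K_\p) \to \prod_{i=1}^r \Br K_\p[n_i].
\]
Identifying $\prod_i \Br K_\p[n_i] \cong \Hom(B, \Q/\Z)$ via the chosen decomposition of $B$ together with the local invariant $\inv_\p \colon \Br K_\p \isom \Q/\Z$, this is exactly the statement that $B$ is prolific at $\p$, and Proposition~\ref{prop:prolific} then delivers the absence of a Brauer--Manin obstruction.
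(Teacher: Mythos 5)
Your overall architecture matches the paper's: reduce prolificity to the surjectivity statement of Theorem~\ref{thm:cone} and then invoke Proposition~\ref{prop:prolific}. But the step you yourself flag as ``the main obstacle'' --- injectivity of $(\Br X/\Br K) \to (\Br X_\p/\Br K_\p)$, with preservation of the orders $n_i$ --- is exactly the substantive content of the theorem, and your sketch of it does not constitute a proof. The assertion that ``the cone structure at $\p$ rules out a nontrivial class of $B$ becoming constant on $X_\p$'' is the conclusion restated, not an argument. What is actually needed is the following chain: Proposition~\ref{prop:cone}(i) gives $\Pic \bar{X}_0 \cong \Z$ generated by a hyperplane section; the vanishing $\H^1(X_0,\O_{X_0})=0$ and \cite[Corollaire~3 to Th\'eor\`eme~7]{Grothendieck:GFGA} give injectivity of $\Pic\X \to \Pic X_0$, while regularity of $\X$ gives $\Pic\X \cong \Pic X_\p$; hence $\Pic X_\p$ is generated by a hyperplane section. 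One then needs Lemma~\ref{lem:brinj}, whose proof compares the Hochschild--Serre sequences over $K$ and $K_\p$ and requires two separate injectivity statements: $\Br\bar{X} \to \Br\bar{X}_\p$ is an isomorphism (Lemma~\ref{lem:algclosed}, an invariance statement under extension of algebraically closed fields), and $\H^1(K,\Pic\bar{X}) \to \H^1(K_\p,\Pic\bar{X}_\p)$ is injective, which reduces via inflation--restriction and torsion-freeness of $\Pic\bar{X}$ to the group-cohomological Lemma~\ref{lem:H1inj} applied with $M^{G_\p} = M^{G} = \Z\cdot(\text{hyperplane class})$. None of this is present or even gestured at in your proposal beyond the mention of Hochschild--Serre; in particular you never use the hypothesis that $\Pic\bar{X}$ is torsion-free, which is essential for killing $\H^1(L,\Pic\bar{X})$, nor do you use Lemma~\ref{lem:cone} for what it is actually needed for here (it plays no role once Lemma~\ref{lem:brinj} is available, since linear independence in $\Br X_\p/\Br K_\p$ is what Theorem~\ref{thm:cone} asks for).

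A secondary, fixable issue: after lifting $\bar{\A}_i$ to $\Br X$ and restricting to $\Br X_\p$, the resulting class may have order divisible by $p$ (it can differ from a prime-to-$p$ class by a constant of $p$-power order), and merely subtracting $\A_i(P)$ does not repair this; Theorem~\ref{thm:cone} requires orders coprime to $p$. You need either the paper's global argument (Lemma~\ref{lem:brdiv}, using the real places) or a local adjustment exploiting divisibility of $\Br K_\p \cong \Q/\Z$ to replace $\A_i$ by a representative of exact order $n_i$ before normalising.
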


Before proving Theorem~\ref{thm:cone-global}, we first prove a series of general lemmas.

\begin{lemma}\label{lem:H1inj}
Let $G$ be a finite group and $H \subseteq G$ a subgroup.  Let $M$ be a
$G$-module which is torsion-free as a $\Z$-module, and suppose that $M^H$
is the same as $M^G$.  Then the restriction map $\H^1(G,M) \to
\H^1(H,M)$ is injective.
\end{lemma}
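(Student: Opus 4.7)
The plan is to exploit the torsion-freeness of $M$ by tensoring with $\Q$.  Set $M_\Q := M \otimes_\Z \Q$ and $T = M_\Q/M$, giving a short exact sequence of $G$-modules
\[
0 \to M \to M_\Q \to T \to 0.
\]
Since $G$ is finite, $|G|$ is invertible on $M_\Q$, so $\H^i(G', M_\Q)=0$ for all $i \ge 1$ and every subgroup $G' \subseteq G$.  The associated long exact sequences then yield, for each such $G'$, identifications
\[
\H^1(G', M) \isom T^{G'} / \operatorname{image}\bigl( M_\Q^{G'} \to T^{G'} \bigr),
\]
functorial in $G'$ with respect to restriction.

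The crucial input from the hypotheses is the elementary observation that, for torsion-free $M$, one has $M_\Q^{G'} = M^{G'} \otimes \Q$ for any subgroup $G'$ (clear denominators).  Applied to $G' = G$ and $G' = H$, the assumption $M^H = M^G$ upgrades to $M_\Q^H = M_\Q^G$.  In particular the maps $M_\Q^G \to T^G$ and $M_\Q^H \to T^H$ share the same source; their common image $A$ lies in $T^G$ (since $G$-invariant elements of $M_\Q$ produce $G$-invariant images in $T$), and hence in $T^H$ as well.

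Consequently the restriction map $\H^1(G,M) \to \H^1(H,M)$ is identified with the map $T^G/A \to T^H/A$ induced by the evident inclusion $T^G \hookrightarrow T^H$, which is manifestly injective.  I do not anticipate a real obstacle: the whole proof is a short diagram chase, and the only mildly subtle point is the identification $(M \otimes \Q)^{G'} = M^{G'} \otimes \Q$ for torsion-free $M$, which as noted is immediate from clearing denominators.  The torsion-freeness hypothesis enters exactly at this step, and both it and the condition $M^H = M^G$ are used solely to force $M_\Q^H = M_\Q^G$ so that the two cokernel descriptions share a common source.
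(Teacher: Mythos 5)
Your proof is correct and follows essentially the same route as the paper: both tensor the sequence $0 \to M \to M\otimes\Q \to M\otimes(\Q/\Z) \to 0$ with the same use of $\H^1(G', M\otimes\Q)=0$ for finite $G'$, and both hinge on the identification $(M\otimes\Q)^{G'} = M^{G'}\otimes\Q$ to upgrade $M^H = M^G$ to $M_\Q^H = M_\Q^G$. The only cosmetic difference is that you phrase the conclusion as an explicit quotient description $T^{G'}/A$ where the paper invokes the Five Lemma on the two four-term exact rows.
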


\begin{proof}
Let us first notice that $(M \otimes \Q)^H$ is the same as $(M \otimes
\Q)^G$; this follows from the observation that $(M \otimes \Q)^G = M^G
\otimes \Q$, and similarly for $H$, and the fact that taking the
tensor product with $\Q$ preserves surjectivity of maps.  Now consider
the short exact sequence of $G$-modules
\[
0 \to M \to M\otimes \Q \to M \otimes (\Q/\Z) \to 0.
\]
Taking cohomology of both $G$ and $H$ results in the following
diagram, which is commutative with exact rows.
\[
\begin{CD}
(M \otimes \Q)^G @>>> (M \otimes \Q/\Z)^G @>>> \H^1(G,M) @>>> 0 \\
@VVV @VVV @V{\res}VV @VVV \\
(M \otimes \Q)^H @>>> (M \otimes \Q/\Z)^H @>>> \H^1(H,M) @>>> 0 \\
\end{CD}
\]
The second vertical map is an inclusion, so is injective.  The
injectivity of the restriction map on $\H^1$ now follows from the Five
Lemma.
\end{proof}

\begin{remark}
If $H$ is a normal subgroup of $G$, then the lemma follows immediately
from the inflation-restriction exact sequence.
\end{remark}

\begin{lemma}\label{lem:brdiv}
Let $X$ be a variety over a number field $K$, and suppose that $X$ has
points in each real completion of $K$.  Then, for any integer $n$, the
natural map $\Br X[n] \to (\Br X/\Br K)[n]$ is surjective.
\end{lemma}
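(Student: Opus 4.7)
The plan is as follows. Given a class in $(\Br X / \Br K)[n]$, pick any lift $\A \in \Br X$; then $\alpha := n\A$ lies in $\Br K$. It suffices to show that $\alpha$ is divisible by $n$ in $\Br K$: for if $n\beta = \alpha$ with $\beta \in \Br K$, then $\A - \beta \in \Br X[n]$ maps to the prescribed class. Divisibility of $\alpha$ in $\Br K$ can be tested via the fundamental exact sequence of class field theory,
\[
0 \to \Br K \to \bigoplus_v \Br K_v \xrightarrow{\sum \inv_v} \Q/\Z \to 0,
\]
which reduces the problem to finding elements $b_v \in \Br K_v$, zero for all but finitely many $v$, satisfying $n b_v = \alpha|_{K_v}$ locally and $\sum_v \inv_v(b_v) = 0$ globally.

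At every complex place, $\Br K_v = 0$ and there is nothing to do. At every finite place, $\Br K_v \cong \Q/\Z$ is divisible, so a solution $b_v$ exists; at the cofinitely many such places with $\inv_v(\alpha) = 0$, take $b_v = 0$. The only obstacle is at real places when $n$ is even, since there $\Br K_v = \tfrac{1}{2}\Z/\Z$ is not $n$-divisible. This is exactly where the hypothesis on real points enters: choose any $P_v \in X(K_v)$; since $\alpha = n\A$ in $\Br X$, evaluation gives $\alpha(P_v) = n\A(P_v)$, which vanishes because $\Br K_v$ is $2$-torsion and $n$ is even. Because $\alpha$ comes from $\Br K$, the restriction $\alpha|_{K_v}$ equals $\alpha(P_v)$, so $\inv_v(\alpha) = 0$ and we may take $b_v = 0$. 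If instead $n$ is odd, multiplication by $n$ is already bijective on $\tfrac{1}{2}\Z/\Z$, so no hypothesis is needed. This real-place $2$-torsion issue is the main obstacle the hypothesis sidesteps.

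It only remains to arrange that $\sum_v \inv_v(b_v) = 0$. With the choices above, $s := \sum_v \inv_v(b_v)$ satisfies $n s = \sum_v \inv_v(\alpha) = 0$ by reciprocity applied to $\alpha \in \Br K$, so $s \in \tfrac{1}{n}\Z/\Z$. Pick any auxiliary finite place $v_0$ and replace $b_{v_0}$ by $b_{v_0} - s$ (viewing $s$ as an $n$-torsion element of $\Br K_{v_0} \cong \Q/\Z$); this preserves $n b_{v_0} = \alpha|_{K_{v_0}}$ since $n s = 0$, and now the invariants sum to zero. By the exact sequence there exists $\beta \in \Br K$ whose local components are the adjusted $(b_v)$; then $n\beta$ and $\alpha$ have equal image in $\bigoplus_v \Br K_v$, hence coincide in $\Br K$ by injectivity of the localisation map, completing the proof.
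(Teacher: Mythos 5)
Your proof is correct and follows essentially the same route as the paper: both reduce the statement to showing that $\alpha = n\A$ is divisible by $n$ in $\Br K$, handle the real places by evaluating at the given real points, and conclude via the fundamental exact sequence of class field theory. The only cosmetic difference is in the local-to-global step: the paper tensors the (split) fundamental sequence with $\Z/n$ to get $(\Br K)/n \hookrightarrow \bigoplus_v (\Br K_v)/n$ at once, whereas you assemble the global $\beta$ by hand, correcting the invariant sum at an auxiliary finite place — both are valid.
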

\begin{proof}
The fundamental exact sequence of class field theory
\[
0 \to \Br K \to \bigoplus_v \Br K_v \xrightarrow{\sum_v \inv_v} \Q/\Z
\to 0,
\]
being split, remains exact upon taking the tensor product with $\Zn$;
we see that the induced map $(\Br K)/n \to \bigoplus_v (\Br K_v)/n$ is
an isomorphism.  Therefore an element of $\Br K$ is divisible by $n$
if and only if it is divisible by $n$ in each $\Br K_v$.  For each
finite place $v$, we know that $\Br K_v \cong \Q/\Z$ is divisible; the
same is trivially true at complex places.  So whether an element of
$\Br K$ is divisible by $n$ is determined at the real places.

Let $\A \in \Br X$ be such that $n\A$ is equivalent to a constant
class $\alpha \in \Br K$.  We must show that we can change $\A$ by a constant class to obtain an element of $\Br X[n]$.  We have local points $P_v \in X(K_v)$ for
each real place $v$.  It follows that, for each real place $v$, the
image of $\alpha$ in $\Br K_v$ is divisible by $n$, for $\alpha(P_v) =
n \A(P_v)$.  So $\alpha \in \Br K$, being divisible by $n$ in each
$\Br K_v$, is also divisible by $n$ in $\Br K$.  Let $\beta \in \Br K$
satisfy $n\beta = \alpha$.  Then we have $n(\A-\beta)=0$, that is,
$\A-\beta$ lies in $\Br X[n]$.
\end{proof}

\begin{lemma}\label{lem:algclosed}
Let $L \subset M$ be an extension of algebraically closed fields of characteristic zero, and let $X$ be a smooth, proper variety over $L$.  Denote by $X_M$ the base change of $X$ to $M$.  Then the natural maps $\NS(X) \to \NS(X_M)$ and $\Br X \to \Br X_M$ are isomorphisms.
\end{lemma}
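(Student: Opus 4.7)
The plan is to treat $\NS$ first, then deduce the statement about $\Br$ using the Kummer sequence together with a base change theorem in \'etale cohomology.  Throughout I work with Grothendieck's Picard scheme $\Pic_{X/L}$ and use that its formation is compatible with base change.

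For $\NS$: since $X$ is smooth and proper over the algebraically closed characteristic-zero field $L$, the connected component $\Pic^0_{X/L}$ is an abelian variety, and the component quotient $\pi_0(\Pic_{X/L}) := \Pic_{X/L}/\Pic^0_{X/L}$ is an \'etale group scheme over $L$.  Because $L$ is algebraically closed, this component quotient is actually a constant group scheme, so its $L$- and $M$-points coincide.  The low-degree part of the Leray spectral sequence for $X \to \Spec L$, together with $\Br L = 0$, gives $\Pic(X) = \Pic_{X/L}(L)$, and one has $\Pic^0(X) = \Pic^0_{X/L}(L)$ by definition; the same statements hold over $M$.  Taking cohomology of the short exact sequence $0 \to \Pic^0_{X/L} \to \Pic_{X/L} \to \pi_0(\Pic_{X/L}) \to 0$ and using that $H^1$ over an algebraically closed field vanishes then identifies $\NS(X)$ with $\pi_0(\Pic_{X/L})(L)$ and $\NS(X_M)$ with $\pi_0(\Pic_{X/L})(M)$, yielding $\NS(X) \isom \NS(X_M)$.

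For $\Br$: for each $n \ge 1$, the Kummer sequence yields an exact sequence $0 \to \Pic(X)/n \to \H^2(X, \mmu_n) \to \Br X[n] \to 0$, and the analogous sequence over $M$, related by vertical pullback maps.  Because $\Pic^0(X)$ is the group of $L$-points of an abelian variety in characteristic zero and hence divisible, $\Pic(X)/n = \NS(X)/n$; the same holds over $M$.  The first part then shows that the left-hand vertical map is an isomorphism, while the middle vertical map is an isomorphism by the standard base-change theorem for \'etale cohomology of a proper scheme with finite coefficients under an extension of algebraically closed fields (a consequence of proper base change; see e.g.\ \cite{Milne:EC}).  The five lemma gives $\Br X[n] \isom \Br X_M[n]$ for every $n$, and since $\Br X$ and $\Br X_M$ are torsion by~\cite[Proposition~1.4]{Grothendieck:GB2}, taking direct limits gives $\Br X \isom \Br X_M$.

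The only substantive geometric input beyond Picard-scheme formalism is the base change isomorphism $\H^2(X, \mmu_n) \isom \H^2(X_M, \mmu_n)$; this is the step that needs to be invoked carefully, and everything else is diagram chasing.
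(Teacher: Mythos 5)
Your proof is correct and follows essentially the same route as the paper: identify $\NS$ with the component group of the Picard scheme (which is insensitive to extension of algebraically closed fields), then compare the Kummer sequences over $L$ and $M$ using invariance of $\H^2(X,\mmu_n)$ under extension of algebraically closed fields and pass to the limit over $n$. Your version is slightly more careful in one spot the paper glosses over, namely justifying $\Pic(X)/n=\NS(X)/n$ via divisibility of $\Pic^0$.
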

\begin{proof}
We can interpret $\NS(X)$ as the group of connected components of the Picard scheme of $X$.  Since $L$ is algebraically closed, the base change $M/L$ induces a bijection on connected components; so
the map on N\'eron--Severi groups $\NS(X) \to \NS(X_M)$ is an isomorphism.  Now, for each integer $n$, the Kummer sequence gives a commutative diagram as follows:
\[
\begin{CD}
0 @>>> \NS(X)/n @>>> \H^2(X, \mmu_n) @>>> \Br X[n] \to 0 \\
@. @VVV @VVV @VVV \\
0 @>>> \NS(X_M)/n @>>> \H^2(X_M, \mmu_n) @>>> \Br X_M[n] \to 0.
\end{CD}
\]
The middle vertical arrow is an isomorphism by~\cite[VI.2.6]{Milne:EC}, and so the right-hand arrow is also an isomorphism.  Because $\Br X$ and $\Br X_M$ are torsion groups, the result follows. 
\end{proof}

\begin{lemma}\label{lem:brinj}
Let $X$ be a smooth, geometrically irreducible, projective variety over a global field $K$.  Let $v$ be a place of $K$ and write $X_v$ for the base change of $X$ to $K_v$.  Suppose that $X(K_v)$ is non-empty, and that $\Pic X_v$ is generated by the class of a hyperplane section.  Then the natural map $(\Br X / \Br K) \to(\Br X_v / \Br K_v)$ is injective.
\end{lemma}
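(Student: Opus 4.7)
The plan is to combine the Hochschild--Serre spectral sequence with Lemma~\ref{lem:H1inj}.  Write $G=\Gal(\bar{K}/K)$ and $G_v=\Gal(\bar{K}_v/K_v)$, with $G_v$ embedded in $G$ via a chosen extension $\bar{K}\hookrightarrow\bar{K}_v$, and set $\Br_1 X=\ker(\Br X\to\Br\bar{X})$ (similarly $\Br_1 X_v$).

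First, I would reduce to the algebraic Brauer group.  If $\A\in\Br X$ has zero image in $\Br X_v/\Br K_v$, then $\A|_{X_v}$ is pulled back from some $\alpha_v\in\Br K_v$; restricting further to $\bar{X}_v$ and using $\Br\bar{K}_v=0$ gives $\A|_{\bar{X}_v}=0$.  By Lemma~\ref{lem:algclosed} the map $\Br\bar{X}\to\Br\bar{X}_v$ is an isomorphism, so $\A\in\Br_1 X$.  It thus suffices to show that $\Br_1 X/\Br K\to\Br_1 X_v/\Br K_v$ is injective.

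Next, I would apply the five-term Hochschild--Serre exact sequences for $\bar{X}\to X$ and $\bar{X}_v\to X_v$, using Hilbert~90 to kill $\H^1(G,\bar{K}^\times)$.  For $v$ non-archimedean, $\H^3(G_v,\bar{K}_v^\times)=0$, and $X_v(K_v)\neq\emptyset$ both kills the edge map $(\Pic\bar{X}_v)^{G_v}\to\Br K_v$ and identifies $\Pic X_v=(\Pic\bar{X}_v)^{G_v}$.  The two sequences yield an injection $\Br_1 X/\Br K\hookrightarrow\H^1(G,\Pic\bar{X})$ and an isomorphism $\Br_1 X_v/\Br K_v\isom\H^1(G_v,\Pic\bar{X}_v)$, so the problem reduces to injectivity of the natural map $\H^1(G,\Pic\bar{X})\to\H^1(G_v,\Pic\bar{X}_v)$ on the image of $\Br_1 X/\Br K$.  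The hypothesis $\Pic X_v=\Z\cdot H$, combined with $H\in\Pic X$ and the inclusion $\Pic\bar{X}\subseteq\Pic\bar{X}_v$ of $G_v$-modules, forces $(\Pic\bar{X})^G=(\Pic\bar{X})^{G_v}=\Z\cdot H$, which is exactly the hypothesis of Lemma~\ref{lem:H1inj}.

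The main obstacle is that Lemma~\ref{lem:H1inj} is stated for finite $G$ acting on torsion-free $M$, whereas here $G$ is profinite and $\Pic\bar{X}$ may contain torsion concentrated in $\Pic^0\bar{X}$.  I would factor the target map as the restriction $\H^1(G,\Pic\bar{X})\to\H^1(G_v,\Pic\bar{X})$ followed by the coefficient-change $\H^1(G_v,\Pic\bar{X})\to\H^1(G_v,\Pic\bar{X}_v)$.  For the restriction, one adapts the proof of Lemma~\ref{lem:H1inj} to the profinite setting via the filtration $0\to\Pic^0\bar{X}\to\Pic\bar{X}\to\NS\bar{X}\to 0$: the action on $\NS\bar{X}$ factors through a finite quotient of $G$, where the lemma applies directly, and the contribution of $\Pic^0\bar{X}$ is handled by a diagram chase.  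For the coefficient-change, Lemma~\ref{lem:algclosed} gives $\NS\bar{X}\isom\NS\bar{X}_v$, so the cokernel of $\Pic\bar{X}\to\Pic\bar{X}_v$ is a quotient of $\Pic^0\bar{X}_v/\Pic^0\bar{X}$; controlling its $G_v$-invariants is the delicate point, and they vanish in the intended applications, for instance when $\H^1(X,\O_X)=0$ forces $\Pic^0\bar{X}=0$.
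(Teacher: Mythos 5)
Your architecture is the same as the paper's: Hochschild--Serre for $\bar{X}\to X$ and $\bar{X}_v\to X_v$ with $\H^3(K,\bar{K}^\times)=\H^3(K_v,\bar{K}_v^\times)=0$, the isomorphism $\Br\bar{X}\isom\Br\bar{X}_v$ from Lemma~\ref{lem:algclosed}, and a reduction to the injectivity of $\H^1(K,\Pic\bar{X})\to\H^1(K_v,\Pic\bar{X}_v)$, which is then fed into Lemma~\ref{lem:H1inj} using the fact that the hyperplane class is defined over $K$, so that the $G$- and $G_v$-invariants of the Picard module agree. However, the obstacle you flag at the end is a genuine gap as your argument stands: you dispose of the contribution of $\Pic^0\bar{X}$ (and of the cokernel of $\Pic\bar{X}\to\Pic\bar{X}_v$) only by appealing to $\H^1(X,\O_X)=0$, which is a hypothesis of Theorem~\ref{thm:cone-global} but \emph{not} of the lemma you were asked to prove; the filtration-and-diagram-chase you sketch for the profinite, non-torsion-free case is not carried out, and the ``delicate point'' about $G_v$-invariants of $\Pic^0\bar{X}_v/\Pic^0\bar{X}$ is left open.

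The paper closes exactly this gap with a short cardinality argument you are missing: since $X(K_v)\neq\emptyset$, one has $\Pic^0(X_v)=A(K_v)$ where $A=\Picsh^0 X$ is the Picard variety, and if $A$ were nontrivial then $A(K_v)$ would be uncountable, contradicting the hypothesis $\Pic X_v\isom\Z$. Hence $A=0$, so $\Pic\bar{X}=\NS(\bar{X})$ is finitely generated, $\Pic\bar{X}\to\Pic\bar{X}_v$ is an isomorphism by Lemma~\ref{lem:algclosed} (so there is no cokernel to control), and the Galois action factors through a finite quotient $\Gal(L/K)$; inflation--restriction (using vanishing of $\H^1(L,\Pic\bar{X})$ for the free module $\Pic\bar{X}$) then reduces everything to Lemma~\ref{lem:H1inj} for the finite group $\Gal(L/K)$ and its decomposition subgroup. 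With this observation inserted in place of your appeal to $\H^1(X,\O_X)=0$, your proof becomes complete and coincides with the paper's.
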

\begin{proof}
Let $\bar{K}$ be an algebraic closure of $K$, and denote by $\bar{X}$ the base change of $X$ to $\bar{K}$.
From the
Hochschild--Serre spectral sequence for $\bar{X} \to X$ we deduce the
following standard exact sequence:
\[
0 \to \ker(\H^1(K, \Pic\bar{X}) \to \H^3(K,\bar{K}^\times)) \to \Br X / \Br K \to \Br \bar{X}.
\]
Let $\bar{K}_v$ be an algebraic closure of $K_v$ containing
$\bar{K}$.  Writing $\bar{X}_v$ for the base change of $X$ to
$\bar{K}_v$, we obtain a similar exact sequence for $X_v$.  Since $\H^3(K,\bar{K}^\times)$ and $\H^3(K_\p, \bar{K}_v^\times)$ are both trivial, it will be sufficient to show that the two maps
\[
\H^1(K, \Pic\bar{X}) \to H^1(K_v, \Pic\bar{X}_v) \quad \text{and}
\quad \Br \bar{X} \to \Br \bar{X}_v
\]
are both injective; the result will then follow by the Five Lemma.  The map $\Br \bar{X} \to \Br \bar{X}_v$ is actually an isomorphism, by Lemma~\ref{lem:algclosed}; so it remains to show that $\H^1(K, \Pic\bar{X}) \to H^1(K_v, \Pic\bar{X}_v)$ is injective.

Since $\Pic X_v$ is isomorphic to $\Z$, it follows that the Abelian variety $A=\Picsh^0 X$ is trivial: otherwise, $A(K_v)$ would be uncountable, and because $X$ admits a $K_v$-point we have $A(K_v) = \Pic^0(X_v)$, so that $\Pic X_v$ would also be uncountable.  Therefore $\Pic X$ is the same as the N\'eron--Severi group $\NS(X)$, and similarly for $\bar{X}$ and $\bar{X}_v$.  

By Lemma~\ref{lem:algclosed}, the natural map
$\Pic \bar{X} \to \Pic \bar{X}_v$ is an isomorphism.  Take $L \subset \bar{K}$ to be a finite
Galois extension over which a set of generators for $\Pic\bar{X}$ is
defined, so that $\Pic X_L \to \Pic \bar{X}$ is also an isomorphism.  Let $L_v$ be the completion of $L$ in $\bar{K}_v$; then Hilbert's Theorem~90 shows that $\Pic X_{L_v} \to \Pic \bar{X}_v$ is injective, and therefore also an isomorphism.
Let $G$ denote the Galois group of $L/K$.  The inclusion $L \subset L_v$ identifies $\Gal(L_v/K_v)$ with the decomposition group $G_v \subset G$, and the two inflation-restriction sequences form a commutative diagram
\[
\begin{CD}
0 @>>> \H^1(G, \Pic X_L) @>>> \H^1(K, \Pic \bar{X}) @>>> \H^1(L, \Pic \bar{X}) \\
@. @VVV @VVV @VVV \\
0 @>>> \H^1(G_v, \Pic X_{L_v}) @>>> \H^1(K_v, \Pic \bar{X}_v) @>>> \H^1(L_v, \Pic \bar{X}_v) \\
\end{CD}
\]
Because $\Pic\bar{X}$ is a finitely generated free $\Z$-module on which the absolute Galois group of $L$ acts trivially, the group $\H^1(L, \Pic\bar{X})$ vanishes, as does $\H^1(L_v, \Pic\bar{X}_v)$. 
So the map
$\H^1(K, \Pic\bar{X}) \to H^1(K_v, \Pic\bar{X}_v)$ can be identified
with the restriction map $\H^1(G,\Pic X_L) \to
\H^1(G_v,\Pic X_L)$.  Because $X$ admits a $K_v$-point, we have $(\Pic X_{L_v})^{G_v} \isom \Pic X_v$, and so $(\Pic X_{L_v})^{G_v}$ is generated by the class of a hyperplane section.  This class is defined over $K$, and so $(\Pic X_L)^G$ and $(\Pic X_{L_v})^{G_v}$ coincide; by Lemma~\ref{lem:H1inj} we deduce that $\H^1(K, \Pic\bar{X}) \to H^1(K_v, \Pic\bar{X}_v)$ is an injection, completing the proof.
\end{proof}

\begin{proof}[of Theorem~\ref{thm:cone-global}]
Let $\X$ be the given model over the ring of integers of $K_\p$, and let $X_0$ be the special fibre.  The restriction map $\Pic \X \to \Pic X_\p$ is surjective because $\X$ is regular, and its kernel is generated by the class of the special fibre, which is principal; so we have $\Pic\X \cong \Pic X_\p$.  By
Proposition~\ref{prop:cone}, $\Pic \bar{X}_0$ is generated by the class of a
hyperplane section.  It follows that $\H^1(X_0, O_{X_0})=0$.
By~\cite[Corollaire~3 to Th\'eor\`eme~7]{Grothendieck:GFGA}, the map $\Pic \X \to \Pic X_0$ is injective, and so
$\Pic X_\p$ is also generated by the class of a hyperplane section.
Now Lemma~\ref{lem:brinj} shows that the natural map $(\Br X / \Br K) \to (\Br X_\p / \Br K_\p)$ is injective.  

Let $\A_1, \dotsc, \A_r$ be a minimal
set of generators for $\Br X / \Br K$.  Then $\A_1, \dotsc, \A_r$ are also
linearly independent as elements of $\Br X_\p / \Br K_\p$.  By Lemma~\ref{lem:brdiv}, we
can choose the $\A_i$ such that each has the same order $n_i$ in $\Br
X$ as it does in $\Br X / \Br K$.  Further changing
each $\A_i$ by an element of $\Br K[n_i]$, we may assume that they are
normalised.  Applying Theorem~\ref{thm:cone}, we find that the product
of the evaluation maps
\[
\prod \A_i \colon X(K_\p) \to \prod_i \Br K_\p[n_i]
\]
is surjective, and therefore $\Br X / \Br K$ is prolific.  By Proposition~\ref{prop:prolific}, there is no Brauer--Manin obstruction to the existence of rational points on $X$.
\end{proof}

As special cases of Theorem~\ref{thm:cone-global}, we recover the
following results from the literature.

\begin{enumerate}
\item (Colliot-Th\'el\`ene, Kanevsky, Sansuc \cite{CTKS})
Let $X \subset \P^3_K$ be a smooth diagonal cubic surface,
\[
X \colon a_0 X_0^3 + a_1 X_1^3 + a_2 X_2^3 + a_3 X_3^3 = 0,
\] 
with $a_i$ coprime integers, having points over every completion of $K$.
Suppose that $p$ is a prime such that $v_p(a_0a_1a_2a_3)=1$.  Then
there is no Brauer--Manin obstruction to the existence of rational
points on $X$.

\item (Bright \cite{Bright:cubics}) Let $X \subset \P^3_K$ be a smooth
  cubic surface defined by an equation of the form
\[
f(X_0,X_1,X_2) + p g(X_0,X_1,X_2,X_3) = 0
\]
with $f,g$ cubic forms with integer coefficients and $p \nmid
g(0,0,0,1)$, having points over every completion of $K$.  (The condition
$p \nmid g(0,0,0,1)$ ensures that the given model is regular at $p$,
and conversely any cubic surface, regular at $p$, and having reduction
modulo $p$ a cone may be put into this form.)  Then there is no
Brauer--Manin obstruction to the existence of rational points on $X$.

\item (Bright \cite{Bright:vanishing})  Let $X \subset \P^3_K$ be a
  smooth diagonal quartic surface,
\[
X \colon a_0 X_0^4 + a_1 X_1^4 + a_2 X_2^4 + a_3 X_3^4 = 0,
\] 
with $a_i$ coprime integers, having points over every completion of $K$.
Let $H$ be the subgroup of $\Q^\times / (\Q^\times)^4$ generated by
$-1$, $4$ and the quotients $a_i / a_j$; suppose that $H$ has order
$256$ (the most general case) and that $H \cap \{ 2,3,5 \} =
\emptyset$.  (These conditions ensure that $\Br X$ has order $2$.)
Suppose that there exists a prime $p$ such that $p>100$ and
$v_p(a_0a_1a_2a_3)=1$.  Then there is no Brauer--Manin obstruction to
the existence of rational points on $X$.
\end{enumerate}

Similar applications should exist whenever a bound on the order of the Brauer group is known.  That is in general a difficult problem, but the case of diagonal quartic surfaces is well understood thanks to work of Ieronymou, Skorobogatov and Zarhin.  To conclude, we deduce from Theorem~\ref{thm:cone-global} a general result for diagonal quartics.

\begin{corollary}
Let $X \subset \P^3_\Q$ be the smooth diagonal quartic surface defined by the equation
\[
a_0 X_0^4 + a_1 X_1^4 + a_2 X_2^4 + a_3 X_3^4 = 0
\]
with $a_i$ coprime integers, and suppose that $X$ has points over every completion of $\Q$.  Suppose that there exists a prime $p > (2^{54}+2^{28}+1)$ satisfying $v_p(a_0 a_1 a_2 a_3) = 1$.  Then there is no Brauer--Manin obstruction to the existence of rational points on $X$.
\end{corollary}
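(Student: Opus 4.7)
The plan is to apply Theorem~\ref{thm:cone-global} to $X$ at the given prime $p$, by verifying its three hypotheses in turn.

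After permuting coordinates we may assume $v_p(a_0) = 1$ and $v_p(a_1) = v_p(a_2) = v_p(a_3) = 0$; this is possible because the $a_i$ are coprime and $v_p(a_0 a_1 a_2 a_3) = 1$. Take $\X \subset \P^3_{\Z_p}$ to be the hypersurface defined by the given equation. Its special fibre $\Xs \subset \P^3_{\F_p}$ is defined by $\bar a_1 X_1^4 + \bar a_2 X_2^4 + \bar a_3 X_3^4 = 0$, which is the projective cone with vertex $(1:0:0:0)$ over the smooth plane quartic $C \subset \P^2_{\F_p}$ given by the same equation in $X_1, X_2, X_3$. By the degree--genus formula $C$ has genus $g = 3$, and smooth plane curves are projectively normal, so the cone is normal and Proposition~\ref{prop:cone} applies. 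Since $\X$ is smooth over $\Z_p$ away from the vertex, regularity of $\X$ needs to be checked only there: in the affine chart $X_0 = 1$ the defining equation is $a_0 + a_1 X_1^4 + a_2 X_2^4 + a_3 X_3^4 = 0$, and $v_p(a_0) = 1$ means that $p$ is a regular parameter at the closed point, so $\X$ is regular. This verifies hypothesis~(ii) of Theorem~\ref{thm:cone-global}.

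The essential external input is a bound on $N := |\Br X / \Br \Q|$ coming from the results of Ieronymou, Skorobogatov and Zarhin on Brauer groups of diagonal quartic surfaces. This bound is an explicit constant far smaller than $p$, so hypothesis~(i) $p \nmid N$ is immediate. For hypothesis~(iii) we need $p > (g' + \sqrt{(g')^2-1})^2$ with $g' = N(g-1)+1 = 2N+1$; noting the identity $2^{54}+2^{28}+1 = (2^{27}+1)^2$, the required inequality is equivalent to $g' + \sqrt{(g')^2-1} < \sqrt{p}$, and (using for instance the crude estimate $g' + \sqrt{(g')^2-1} < 2g'$) this is guaranteed as long as the Ieronymou--Skorobogatov--Zarhin bound keeps $g' = 2N+1$ at most $2^{26}$. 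Granting this, all three hypotheses of Theorem~\ref{thm:cone-global} hold, yielding the stated vanishing of the Brauer--Manin obstruction.

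The main obstacle is the bound on $|\Br X / \Br \Q|$: this is a deep external result, and the appearance of the specific threshold $2^{54}+2^{28}+1$ in the statement is precisely a reflection of the size of the bound on $N$ that can be extracted from the existing literature on diagonal quartic surfaces. Once that bound is accepted, everything else in the proof is routine verification of hypotheses.
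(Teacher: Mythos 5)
Your proposal is correct and takes essentially the same route as the paper's proof: both apply Theorem~\ref{thm:cone-global} at the prime $p$, using the evident regular model whose special fibre is the projective cone over a smooth plane quartic of genus $3$, and invoke the Ieronymou--Skorobogatov result (no odd torsion in $\Br X/\Br\Q$) together with the Ieronymou--Skorobogatov--Zarhin bound ($2$-torsion of order at most $2^{25}$) to control $N$. The only points the paper makes explicit that you leave implicit are the unnumbered hypotheses of the theorem, namely $\H^1(X,\O_X)=0$ and torsion-freeness of $\Pic\bar{X}$ (both immediate since a smooth quartic in $\P^3$ is a K3 surface); note also that your verification of condition~\eqref{eq:size} needs $g'=2N+1\le 2^{26}$, i.e.\ $N\le 2^{25}-1$, which falls a hair short of the cited bound $N\le 2^{25}$ --- but the paper's own threshold $(2^{27}+1)^2$ suffers from exactly the same borderline issue in the extremal case, so this is not a defect specific to your argument.
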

\begin{proof}
Because $X$ is a K3 surface, the condition $\H^1(X,\O_X)=0$ is satisfied and $\Pic \bar{X}$ is torsion-free (in fact, free of rank $20$).  The condition $v_p(a_0 a_1 a_2 a_3)=1$ ensures that the given equation defines a model for $X$ over $\Z_p$ that is regular; the special fibre is the projective cone over a smooth diagonal quartic curve, of genus $3$.
Ieronymou and Skorobogatov have recently shown~\cite[Theorem~1.1]{IS:odd} that this condition also implies the absence of any odd torsion in $\Br X / \Br \Q$; and the same authors with Zarhin showed in~\cite{ISZ:JLMS-2011} that the $2$-torsion in $\Br X / \Br \Q$ has order at most $2^{25}$.  So, for any $p$ larger than the claimed bound, Theorem~\ref{thm:cone-global} applies and shows that there is no Brauer--Manin obstruction.
\end{proof}

It should be noted that Ieronymou and Skorobogatov~\cite{IS:odd} have shown that odd-order elements in the Brauer groups of diagonal quartic surfaces over $\Q$, when they do exist, can never obstruct the Hasse principle.  They do this by calculating that, if $\A$ has order $p=3$ or $5$ in $\Br X / \Br \Q$, then the corresponding evaluation map $\A \colon X(\Q_p) \to \Br \Q_p[p]$ is surjective.  Because this involves studying elements of order $p$ in the Brauer group at the prime $p$, the methods of this paper are useless in understanding that result.

\paragraph{acknowledgements}
Substantial parts of this work were carried out during the semester on `Rational points and algebraic cycles' at the Centre Interfacultaire Bernoulli of the \'Ecole Polytechnique F\'ed\'erale de Lausanne in 2012; I thank the organisers and staff for their support and hospitality.  I also thank the Center for Advanced Mathematical Sciences of the American University of Beirut for their support.

\bibliographystyle{abbrv}
\bibliography{martin}

\providecommand\eatperiod[1]{\ifthenelse{\equal{#1}{.}}{}{#1}}
\begin{thebibliography}{10}

\bibitem{Bright:vanishing}
M.~J. Bright.
\newblock The {B}rauer--{M}anin obstruction on a general diagonal quartic
  surface.
\newblock {\em Acta Arith.}, 147(3):291--302, 2011.

\bibitem{Bright:cubics}
M.~J. Bright.
\newblock Evaluating {A}zumaya algebras on cubic surfaces.
\newblock {\em Manuscripta Math.}, 134(3-4):405--421, 2011.

\bibitem{Bright:brsing}
M.~J. Bright.
\newblock Brauer groups of singular del {P}ezzo surfaces.
\newblock {\em Michigan Math. J.}, 62(3):657--664, 2013.

\bibitem{CTKS}
J.-L. Colliot-Th{\'e}l{\`e}ne, D.~Kanevsky, and J.-J. Sansuc.
\newblock Arithm\'etique des surfaces cubiques diagonales.
\newblock In {\em Diophantine approximation and transcendence theory (Bonn,
  1985)}, volume 1290 of {\em Lecture Notes in Math.}, pages 1--108. Springer,
  Berlin, 1987.

\bibitem{CTS:IMRN-1996}
J.-L. Colliot-Th{\'e}l{\`e}ne and S.~Saito.
\newblock Z\'ero-cycles sur les vari\'et\'es {$p$}-adiques et groupe de
  {B}rauer.
\newblock {\em Internat. Math. Res. Notices}, (4):151--160, 1996.

\bibitem{CTS:JRAM-2013}
J.-L. Colliot-Th{\'e}l{\`e}ne and A.~N. Skorobogatov.
\newblock Descente galoisienne sur le groupe de {B}rauer.
\newblock {\em J. Reine Angew. Math.}, 682:141--165, 2013.

\bibitem{CTS:TAMS-2013}
J.-L. Colliot-Th{\'e}l{\`e}ne and A.~N. Skorobogatov.
\newblock Good reduction of the {B}rauer-{M}anin obstruction.
\newblock {\em Trans. Amer. Math. Soc.}, 365(2):579--590, 2013.

\bibitem{CT:PLMS-1988}
D.~F. Coray and M.~A. Tsfasman.
\newblock Arithmetic on singular {D}el {P}ezzo surfaces.
\newblock {\em Proc. London Math. Soc. (3)}, 57(1):25--87, 1988.

\bibitem{Corn:PLMS-2007}
P.~Corn.
\newblock The {B}rauer-{M}anin obstruction on del {P}ezzo surfaces of degree 2.
\newblock {\em Proc. Lond. Math. Soc. (3)}, 95(3):735--777, 2007.

\bibitem{SGA4.5}
P.~Deligne.
\newblock {\em Cohomologie \'etale}.
\newblock Number 569 in Lecture Notes in Mathematics. Springer-Verlag, Berlin,
  1977.
\newblock S{\'e}minaire de G{\'e}om{\'e}trie Alg{\'e}brique du Bois-Marie SGA
  4${\frac{1}{2}}$. Avec la collaboration de J. F. Boutot, A. Grothendieck, L.
  Illusie et J. L. Verdier.

\bibitem{Deligne:WeilII}
P.~Deligne.
\newblock La conjecture de {W}eil. {II}.
\newblock {\em Inst. Hautes \'Etudes Sci. Publ. Math.}, (52):137--252, 1980.

\bibitem{DF:JA-1984}
F.~R. DeMeyer and T.~J. Ford.
\newblock On the {B}rauer group of surfaces.
\newblock {\em J. Algebra}, 86(1):259--271, 1984.

\bibitem{EJ:CEJM-2010}
A.-S. Elsenhans and J.~Jahnel.
\newblock Cubic surfaces with a {G}alois invariant double-six.
\newblock {\em Cent. Eur. J. Math.}, 8(4):646--661, 2010.

\bibitem{EJ:IJNT-2011}
A.-S. Elsenhans and J.~Jahnel.
\newblock Cubic surfaces with a {G}alois invariant pair of {S}teiner trihedra.
\newblock {\em Int. J. Number Theory}, 7(4):947--970, 2011.

\bibitem{Fujiwara:PC}
K.~Fujiwara.
\newblock A proof of the absolute purity conjecture (after {G}abber).
\newblock In {\em Algebraic geometry 2000, Azumino (Hotaka)}, volume~36 of {\em
  Adv. Stud. Pure Math.}, pages 153--183. Math. Soc. Japan, Tokyo, 2002.

\bibitem{Gordon:JA-82}
W.~J. Gordon.
\newblock Brauer groups of local rings with conelike singularities.
\newblock {\em J. Algebra}, 76(2):353--366, 1982.

\bibitem{Grothendieck:GFGA}
A.~Grothendieck.
\newblock G\'eom\'etrie formelle et g\'eom\'etrie alg\'ebrique.
\newblock In {\em Fondements de la g\'eom\'etrie alg\'ebrique. [{E}xtraits du
  {S}\'eminaire {B}ourbaki, 1957--1962.]}. Secr\'etariat math\'ematique, Paris,
  1962.

\bibitem{Grothendieck:GB1}
A.~Grothendieck.
\newblock Le groupe de {B}rauer {I}.
\newblock In J.~Giraud et~al., editors, {\em Dix Expos\'es sur la Cohomologie
  des Sch\'emas}, volume~3 of {\em Advanced studies in mathematics}, pages
  46--65. North-Holland, Amsterdam, 1968.

\bibitem{Grothendieck:GB2}
A.~Grothendieck.
\newblock Le groupe de {B}rauer {II}.
\newblock In J.~Giraud et~al., editors, {\em Dix Expos\'es sur la Cohomologie
  des Sch\'emas}, volume~3 of {\em Advanced studies in mathematics}, pages
  66--87. North-Holland, Amsterdam, 1968.

\bibitem{Grothendieck:GB3}
A.~Grothendieck.
\newblock Le groupe de {B}rauer {III}.
\newblock In J.~Giraud et~al., editors, {\em Dix Expos\'es sur la Cohomologie
  des Sch\'emas}, volume~3 of {\em Advanced studies in mathematics}, pages
  88--188. North-Holland, Amsterdam, 1968.

\bibitem{SGA1}
A.~Grothendieck.
\newblock {\em S{\'e}minaire de G{\'e}ometrie Alg{\'e}brique du Bois-Marie SGA
  1: Rev{\^e}tements {\'E}tales et Groupe Fondamental}.
\newblock Number 224 in Lecture Notes in Mathematics. Springer-Verlag, 1971.

\bibitem{Hoobler:MRT-84}
R.~T. Hoobler.
\newblock Functors of graded rings.
\newblock In {\em Methods in ring theory ({A}ntwerp, 1983)}, volume 129 of {\em
  NATO Adv. Sci. Inst. Ser. C Math. Phys. Sci.}, pages 161--170. Reidel,
  Dordrecht, 1984.

\bibitem{IS:odd}
E.~Ieronymou and A.~N. Skorobogatov.
\newblock Odd order {B}rauer--{M}anin obstruction on diagonal quartic surfaces.
\newblock {\em Adv. Math.}, 270:181--205, 2015.

\bibitem{ISZ:JLMS-2011}
E.~Ieronymou, A.~N. Skorobogatov, and Y.~G. Zarhin.
\newblock On the {B}rauer group of diagonal quartic surfaces.
\newblock {\em J. London Math. Soc.}, 83:659--672, 2011.

\bibitem{LW:AJM-1954}
S.~Lang and A.~Weil.
\newblock Number of points of varieties in finite fields.
\newblock {\em Amer. J. Math.}, 76:819--827, 1954.

\bibitem{Lipman:IHES-1969}
J.~Lipman.
\newblock Rational singularities, with applications to algebraic surfaces and
  unique factorization.
\newblock {\em Inst. Hautes \'Etudes Sci. Publ. Math.}, (36):195--279, 1969.

\bibitem{Manin:GBG}
{\relax Yu}.~I. Manin.
\newblock Le groupe de {B}rauer--{G}rothendieck en g\'eom\'etrie diophantienne.
\newblock In {\em Actes du Congr\`es International des Math\'ematiciens (Nice,
  1970), Tome 1}, pages 401--411. Gauthier-Villars, Paris, 1971.

\bibitem{Milne:EC}
J.~S. Milne.
\newblock {\em Etale Cohomology}.
\newblock Number~33 in Princeton mathematical series. Princeton University
  Press, 1980.

\bibitem{Milne:LEC}
J.~S. Milne.
\newblock Lectures on etale cohomology (v2.20).
\newblock Available at \url{http://www.jmilne.org/math}, 2012.

\bibitem{Riou:Gysin}
J.~Riou.
\newblock Classes de {C}hern, morphismes de {G}ysin, puret\'e absolue.
\newblock In L.~Illusie, Y.~Laszlo, and F.~Orgogozo, editors, {\em Travaux de
  {G}abber sur l'uniformisation locale et la cohomologie \'etale des sch\'emas
  quasi-excellents. S\'eminaire \`a l'\'Ecole polytechnique 2006--2008}, volume
  363--364 of {\em Ast{\'e}risque}. Soc. Math. France, 2014.

\bibitem{SvL:density}
C.~Salgado and R.~van Luijk.
\newblock Density of rational points on del {P}ezzo surfaces of degree one.
\newblock {\em Adv. Math.}, 261:154--199, 2014.

\bibitem{Serre:CL}
J.-P. Serre.
\newblock {\em Corps Locaux}, volume VIII of {\em Publications de l'Institut de
  Math\'ematique de l'Universit\'e de Nancago}.
\newblock Hermann, Paris, 1968.

\bibitem{Serre:LF}
J.-P. Serre.
\newblock {\em Local fields}, volume~67 of {\em Graduate Texts in Mathematics}.
\newblock Springer-Verlag, New York, 1979.
\newblock Translated from the French by Marvin Jay Greenberg.

\bibitem{Skorobogatov:TRP}
A.~Skorobogatov.
\newblock {\em Torsors and rational points}, volume 144 of {\em Cambridge
  Tracts in Mathematics}.
\newblock Cambridge University Press, Cambridge, 2001.

\bibitem{SD:MPCPS-1993}
{\relax Sir Peter\aftergroup\eatperiod}.~Swinnerton-Dyer.
\newblock The {B}rauer group of cubic surfaces.
\newblock {\em Math. Proc. Camb. Phil. Soc.}, 113:449--460, May 1993.

\bibitem{Witt:JRAM-1937}
E.~Witt.
\newblock Schiefk{\"o}rper {\"u}ber diskret bewerteten {K}{\"o}rpern.
\newblock {\em J. Reine Angew. Math.}, 176:153--156, 1937.

\end{thebibliography}

\end{document}